\definecolor{green}{RGB}{0,144,0}
\definecolor{bluegreen}{RGB}{17,100,180}
\numberwithin{equation}{section}
\numberwithin{figure}{section}
\numberwithin{table}{section}
\newtheorem{theorem}{Theorem}[section]
\newtheorem*{theorem*}{Theorem}
\newtheorem{lemma}[theorem]{Lemma}
\newtheorem{proposition}[theorem]{Proposition}
\newtheorem{remark}[theorem]{Remark}
\newcommand{\teichmuller}{Teichm{\"u}ller }
\newcommand{\calH}{\mathcal{H}}
\newcommand{\C}{\mathbb{C}}
\newcommand{\Z}{\mathbb{Z}}
\newcommand{\R}{\mathbb{R}}
\newcommand{\N}{\mathbb{N}}
\DeclareMathOperator{\SL}{\text{SL}}
\DeclareMathOperator{\GL}{\text{GL}}
\author{Luke Jeffreys}
\address{School of Mathematics , University of Bristol, Fry Building, Woodland Road, Bristol BS8 1UG}
\curraddr{}
\email{luke.jeffreys@bristol.ac.uk}
\thanks{The first author is a Leverhulme Early Career Fellow (ECF-2023-553) and so thanks the Leverhulme Trust for their support.}
\author{Carlos Matheus}
\address{Centre de Math{\'e}matiques Laurent Schwartz, {\'E}cole Polytechnique, 91128 Palaiseau Cedex, France}
\curraddr{}
\email{carlos.matheus@math.cnrs.fr}
\title[Diameter bounds for $\SL(2,\Z)$-orbits of origamis]{Diameter bounds for $\boldsymbol{\SL(2,\Z)}$-orbits of origamis in $\boldsymbol{\calH(2)}$ and the Prym loci in $\boldsymbol{\calH(4)}$ and $\boldsymbol{\calH(6)}$}
\subjclass[2020]{Primary: 32G15, 30F30, 30F60. Secondary: 05C12.}
\keywords{origamis, orbit graphs, graph diameter}
\begin{document}

\begin{abstract}
    Using algorithms implicit in the classification of $\SL(2,\Z)$-orbits of primitive origamis in the stratum $\calH(2)$ due to Hubert--Leli\`evre and McMullen, we give diameter bounds on the resulting orbit graphs. Since the machinery of McMullen from $\calH(2)$ is generalised and reused in Lanneau and Nguyen's classification of the orbits of Prym eigenforms in $\calH(4)$ and $\calH(6)$, we are also able to obtain diameter bounds for the orbit graphs in this setting as well. In each stratum, we obtain diameter bounds of the form $O(N^{2/3}\log N)$, where $N$ is the size of the orbit graph.
\end{abstract}

\maketitle

\section{Introduction}

A square-tiled surface is an orientable connected surface obtained from a finite cover of the unit square torus $\mathbb{T}^2:=\mathbb{C}/(\mathbb{Z}\oplus i\mathbb{Z})$ which is possibly branched only at $0\in\mathbb{T}^2$. In the literature, square-tiled surfaces are also called origamis (for short). 

Origamis play a special role in the study of the natural $\SL(2,\mathbb{R})$ action on moduli spaces of Abelian differentials. For example, since they correspond to integral points of such moduli spaces, one can compute Masur--Veech volumes of these spaces by counting origamis (cf. \cite{Zo} and \cite{EsOk}). Moreover, their $\SL(2,\mathbb{R})$-orbits are closed subvarieties of the moduli spaces known as \emph{arithmetic Teichm\"uller curves}. We refer the reader to the recent book \cite{AtMa} and survey \cite{Fil} for further explanations about moduli spaces of Abelian differentials and its connections to several topics in Mathematics (including Dynamics and Hodge Theory).  

As it turns out, origamis can be organised into $\SL(2,\mathbb{Z})$-orbits leading to Schreier graphs sitting on the corresponding arithmetic Teichm\"uller curves. That is, one turns an $\SL(2,\Z)$-orbit into a combinatorial graph by setting the vertices to be the origamis in the orbit with the edges corresponding to the action of a generating set $S$ for $\SL(2,\Z)$ --- this is isomorphic to the coset Schreier graph $\text{Sch}(\SL(2,\Z), H ,S)$ with $H$ being the stabliser of an origami in the orbit and $S$ as before. In this context, McMullen conjectured that the family of such Schreier graphs associated to primitive origamis of genus $2$ with a single conical singularity is an expander family. In a previous paper \cite{JM}, we gave indirect evidence towards McMullen's conjecture by showing that these graphs are eventually non-planar. In the present paper, we give another piece of indirect evidence by establishing the following bound on the diameter of these graphs: 

\begin{theorem}\label{thm:main-H2}
    If $X$ is a primitive origami in the stratum $\mathcal{H}(2)$ of the moduli space of translation surfaces of genus two with a single conical singularity, then the graph $\mathcal{G}(X)$ associated to its $\SL(2,\mathbb{Z})$-orbit has a diameter $O(|V|^{2/3}\log |V|) = O(n^2\log n)$, where $|V|$ is the number of vertices of $\mathcal{G}(X)$ and $n$ is the number of unit squares tiling $X$ (i.e., the degree of the branched covering map $X\to\mathbb{T}^2$ defining $X$). 
\end{theorem}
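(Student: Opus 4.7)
The plan is to exploit the explicit normalization algorithm implicit in the Hubert-Lelièvre and McMullen classification of $\SL(2,\Z)$-orbits of primitive origamis in $\calH(2)$: every such origami can be reduced to a canonical L-shape representative via an alternating procedure using horizontal parabolic shears (powers of $T$) and the rotation $S$, and the diameter bound will emerge from a careful count of how many elementary generators of $\SL(2,\Z)$ (equivalently, edges of $\mathcal{G}(X)$) this reduction consumes.

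Fixing generators $T=\left(\begin{smallmatrix}1 & 1\\ 0 & 1\end{smallmatrix}\right)$ and $S=\left(\begin{smallmatrix}0 & -1\\ 1 & 0\end{smallmatrix}\right)$ so that each edge of $\mathcal{G}(X)$ is a single multiplication by $T^{\pm 1}$ or $S^{\pm 1}$, I would recall that any origami $Y$ in the orbit admits a horizontal cylinder decomposition with one or two cylinders of widths $w_i$, heights $h_i$ and twists $t_i$ satisfying $\sum_i w_i h_i = n$. McMullen's reduction acts on this combinatorial data in a manner analogous to the Euclidean continued-fraction algorithm: an appropriate power $T^k$ with $|k|\le n$ is used to rectify a twist parameter, then $S$ is applied to swap horizontal and vertical directions, and the procedure is iterated on the resulting origami until it terminates at a canonical L-shape $L_X$ determined by the orbit.

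The diameter bound then follows by combining (i) a bound of order $n$ on the number of elementary edges consumed per round of the reduction, since each $T^k$ has $|k|\le n$, and (ii) a bound on the number of rounds of the reduction, controlled by a continued-fraction-type complexity of the cylinder data which in the worst case is $O(n)$ with an additional $\log n$ factor coming from the iterated twist rectifications. Summing gives a distance from $Y$ to $L_X$ of order $n^2 \log n$; applying the triangle inequality to any pair of origamis in the orbit yields the same bound for the diameter of $\mathcal{G}(X)$. The reformulation $O(n^2\log n) = O(|V|^{2/3}\log|V|)$ then uses the Hubert-Lelièvre count $|V| = \Theta(n^3)$ for the size of primitive orbits in $\calH(2)$.

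The main obstacle will be to bound precisely the number of reduction rounds, particularly in the worst case where the continued-fraction expansion of the slope parameter is long, and to verify that the reduction respects the spin parity (Arf invariant) when $n$ is odd, so that the targeted L-shape representative $L_X$ lies in the correct orbit and the bound holds uniformly within each of the (one or two) orbits of a given size $n$.
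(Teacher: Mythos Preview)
Your proposal is not a proof but an outline, and its accounting of edge costs is off in a way that matters. The cusp width of a two-cylinder origami in $\calH(2)$ is $\mathrm{lcm}\bigl(w_1/\gcd(w_1,h_1),\,w_2/\gcd(w_2,h_2)\bigr)$, which is generically of order $n^2$, not $n$; so the claim that each round of ``rectify twist then rotate'' consumes only $O(n)$ generators is false. If you take $O(n)$ such rounds at $O(n^2)$ each you land at $O(n^3)$, and indeed the paper shows in Section~\ref{sec:H2-HL} that the naive shear/rotate normalization of Hubert--Leli\`evre yields only $O(n^{5/2})$, strictly worse than the claimed bound. The ``additional $\log n$ factor coming from iterated twist rectifications'' is not justified and does not arise from any mechanism you have described.

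The paper obtains $O(n^2\log n)$ by a genuinely different decomposition. The elementary move is not a bare shear-and-rotate but a \emph{butterfly move} $B_q$ on McMullen's splitting prototypes $(a,b,c,e)$. Two separate regimes are costed: first, repeated application of $B_1$ drives an arbitrary prototype to a \emph{reduced} one $(0,b,1,e)$ in $O(\log n)$ moves (this requires tracking how $c'=\gcd(b+a,c)$ evolves under $B_1$, with a non-obvious argument that three successive $B_1$'s either halve $c$ or force $c=1$); each such move costs $O(n^2)$ because of the large cusp width, giving $O(n^2\log n)$ for this phase. Second, reduced prototypes form a set $\mathcal{S}_D$ of size $O(n)$, and Lemma~\ref{lem:H2-red} shows that the origami associated to a reduced prototype has $h_2=1$ and cusp width exactly $w_2=O(n)$, so butterfly moves \emph{between reduced prototypes} cost only $O(n)$ each; traversing $\mathcal{S}_D$ then costs $O(n^2)$. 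Your outline contains neither of these two ideas---the $O(\log n)$ reduction to $c=1$ and the cusp-width drop to $O(n)$ for reduced prototypes---and without them the target bound is not reachable by the method you sketch.
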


This article is organised along the following lines. In Section \ref{sec:prelim}, we quickly review some basic material about moduli spaces of translation surfaces, $\SL(2,\mathbb{Z})$-orbits of primitive origamis, and the surface parameters introduced by Hubert--Leli\`evre that we use to describe a given origami. In Section~\ref{sec:butterfly}, we describe McMullen's classification result and introduce the butterfly moves he used in his proof. In Section \ref{sec:H2-McM}, we prove Theorem \ref{thm:main-H2} by effectivising McMullen's results on butterfly moves. Finally, we take advantage of the results of Lanneau--Nguyen extending the technology of butterfly moves to the Prym loci of the minimal strata of moduli spaces of translation surfaces in genus 3 and 4 (resp.) in order to establish in Sections \ref{sec:H4} and \ref{sec:H6} (resp.) the analogs of Theorem \ref{thm:main-H2} in these settings. We obtain the following.

\begin{theorem}\label{thm:main-H4-H6}
    If $X$ is a primitive origami contained in the Prym eigenform loci of $\calH(4)$ or $\calH(6)$, then the graph $\mathcal{G}(X)$ associated to its $\SL(2,\Z)$-orbit has diameter $O(|V|^{2/3}\log|V|) = O(n^2\log n)$, where $|V|$ is the number of vertices of $\mathcal{G}(X)$ and $n$ is the number of unit squares tiling $X$.
\end{theorem}

\begin{remark}
    The diameter bounds in this paper are far from optimal: for instance, a positive answer to McMullen's conjecture would imply a diameter bound of the form $O(\log n)$. For this reason, we include in Appendix~\ref{app:improvements} a short discussion of potential sources of improvement (partly supported by numerical experiments) in the basic arguments.

    In Appendix~\ref{sec:H2-HL}, we analyse the algorithm implicit in the classification proof of Hubert--Leli\`evre. We do this is in an appendix since, in this preliminary analysis, it produces a weaker diameter bound.
\end{remark}
     
\subsection*{Acknowledgements} For the purpose of open access, the authors have applied a Creative Commons Attribution (CC BY) licence to any Author Accepted Manuscript version arising from this submission. We also thank the anonymous referees for their valuable comments and suggestions, which have improved the readability of the paper.

\section{Preliminaries}\label{sec:prelim}

Here, we remind the reader of the necessary background on origamis and their $\SL(2,\Z)$-orbits as well as the language required to discuss the works of Hubert--Leli\`evre and McMullen.

\subsection{Translation surfaces and their moduli spaces} A compact Riemann surface $M$ of genus $g\geq 1$ equipped with a non-trivial Abelian differential $\omega$ is called a \textit{translation surface}. This nomenclature is justified by the fact that the local primitives of $\omega$ away from the set $\Sigma$ of its zeroes yields a collection of charts on $M\setminus\Sigma$ whose changes of coordinates are given by translations of the complex plane $\mathbb{C}$. By the Riemann-Roch Theorem, $\omega$ has $2g-2$ zeroes counted with multiplicities, i.e., if $\sigma=|\Sigma|$ is the cardinality of $\Sigma$ and $k_1,\dots,k_{\sigma}$ are the vanishing orders of $\omega$ at the elements of $\Sigma$, then $\sum\limits_{j=1}^{\sigma}k_j=2g-2$. 

By gathering together translation surfaces $X=(M,\omega)$ with a prescribed list $\kappa=(k_1,\dots,k_{\sigma})$ of orders of zeroes of $\omega$, one obtains a stratum $\mathcal{H}(\kappa)$ of the moduli space of Abelian differentials of genus $g$. This is a complex orbifold of dimension $2g+\sigma-1$ carrying a natural $\SL(2,\mathbb{R})$ action (consisting of post-composing the local primitives of $\omega$ with the usual action of $\SL(2,\mathbb{R})$ on $\mathbb{R}^2=\mathbb{C}$). For further information and references about these objects, the reader is encouraged to consult Athreya--Masur's book \cite{AtMa}. 

\subsection{Origamis and their $\boldsymbol{\SL(2,\Z)}$-orbits} An \textit{origami} (or square-tiled surface) is a translation surface $(M,\pi^*(dz))$ obtained from a finite cover $\pi:M\to \mathbb{T}^2=\mathbb{C}/(\mathbb{Z}+i\mathbb{Z})$ possibly branched only at $0\in\mathbb{T}^2$. Alternatively, an origami is constructed from a pair $(h,v)$ of permutations acting transitively on $n$ symbols by taking unit squares $sq(i)$, $i=1,\dots,n$, and gluing the rightmost vertical side of $sq(i)$ to the leftmost vertical side of $sq(h(i))$ and the topmost horizontal side of $sq(i)$ to the bottommost horizontal side of $sq(v(i))$. An origami is said to be \textit{primitive} if it is not a cover of another origami different from itself or the unit-square torus. Equivalently, an origami is primitive if the group $\langle h, v\rangle$ is primitive as a permutation group.

The group $\SL(2,\mathbb{Z})$ leaves invariant the set of primitive origamis. In fact, $\SL(2,\mathbb{Z})$ is generated by the matrices 
\[T = \begin{pmatrix}
    1 & 1 \\
    0 & 1
\end{pmatrix} \quad \textrm{and} \quad 
S = \begin{pmatrix}
    1 & 0 \\
    1 & 1
\end{pmatrix}\] and one can check that their actions on pairs of permutations are $T(h,v)=(h,vh^{-1})$ and $S(h,v)=(hv^{-1},v)$ (see, for example,~\cite[Figure 2.2]{JM}). In particular, the $\SL(2,\mathbb{Z})$-orbits of origamis are coded by graphs whose vertices are the elements of these orbits and edges corresponding to elements deduced one from the other by applying $T$ or $S$.

\begin{remark}
    Later, we will also use the matrix $R=\begin{pmatrix}
    0 & -1 \\
    1 & 0
\end{pmatrix}$ satisfying $R=ST^{-1}S$. Note that $R$ rotates the origami by $\frac{\pi}{2}$ anti-clockwise.
\end{remark}

For a primitive origami $X$, the graph $\mathcal{G}(X)$ of the main theorem is the graph whose vertex set is the set of origamis in the $\SL(2,\Z)$-orbit of $X$ with edges corresponding to the action of $T$ and $S$.

The origamis $\pi:(M,\omega)\to(\mathbb{T}^2,dz)$ studied in this paper come equipped with an involution $\iota$ of $M$ taking $\omega$ to $-\omega$. In this case, one can count the number $l_0$ of fixed points of $\iota$ over $0\in\mathbb{T}^2$ that are distinct from the zero of $\omega$, and the numbers $l_1, l_2, l_3$ of fixed points of $\iota$ over the other three $2$-torsion points of $\mathbb{T}^2$. As it turns out, the pair $(l_0,[l_1,l_2,l_3])$, where $[l_1,l_2,l_3]$ is an unordered triple, is an invariant of the $\SL(2,\mathbb{Z})$-orbit of $(M,\omega)$ called its HLK-invariant.  

\subsection{Surface parameters}\label{subsec:params} The HLK-invariant was originally used by Hubert and Leli\`evre \cite{HL} to distinguish between two $\SL(2,\mathbb{Z})$-orbits (called A and B) of primitive origamis in $\mathcal{H}(2)$ tiled by a prime number of unit squares. In general, by putting together the works \cite{HL}, \cite{McM} and \cite{LR}, if $X$ is a primitive origami in $\mathcal{H}(2)$ tiled by $n\geq 3$ squares, then 
\begin{itemize}
    \item $X$ falls into a single $\SL(2,\mathbb{Z})$-orbit whenever $n=3$ or $n\geq 4$ is even, and 
    \item $X$ falls into one of two possible $\SL(2,\mathbb{Z})$-orbits (called A and B) whenever $n\geq 5$ is odd: the A orbit has cardinality $\frac{3}{16}(n-1)n^2\prod\limits_{p|n, p \textrm{ prime}}(1-p^{-2})$ and the B orbit has cardinality $\frac{3}{16}(n-3)n^2\prod\limits_{p|n, p \textrm{ prime}}(1-p^{-2})$.
\end{itemize}
In the language of HLK-invariants, the orbit for even $n\geq 4$ corresponds to the HLK-invariant $(1,[2,2,0])$, the A orbit and the case of $n=3$ corresponds to the HLK-invariant $(0,[3,1,1])$, and the B orbit corresponds to the HLK-invariant $(2,[1,1,1])$.

Note that in all cases the size of the orbit is $\Theta(n^{3})$.

Moreover, one can exhibit explicit representatives of the A and B orbits by using the surface parameters introduced by Hubert--Leli\`evre. More concretely, an origami $X$ in $\mathcal{H}(2)$ tiled by $n$ unit squares can decompose into one or two cylinders in the horizontal direction. If $X$ has two cylinders then it is characterised by the widths $w_1$, $w_2$ and heights $h_1$, $h_2$ of these cylinders, and the twists $t_1$, $t_2$ (i.e., the relative positions of the conical singularity in the boundaries of these cylinders): cf. Figure \ref{fig:H2-params} below. Note that these parameters satisfy $h_1w_1+h_2w_2=n$ (because the total area of $X$ is $n$), and, if $n\geq 5$ is odd, one gets a representative of the A orbit, resp. B orbit, by setting $t_1=0=t_2$, $h_1=1=h_2$, $w_1=1$, $w_2=n-1$, resp. $t_1=0=t_2$, $h_1=2$, $h_2=1$, $w_1=1$, $w_2=n-2$. For a primitive origami, we require $\gcd(h_1,h_2) = 1$.

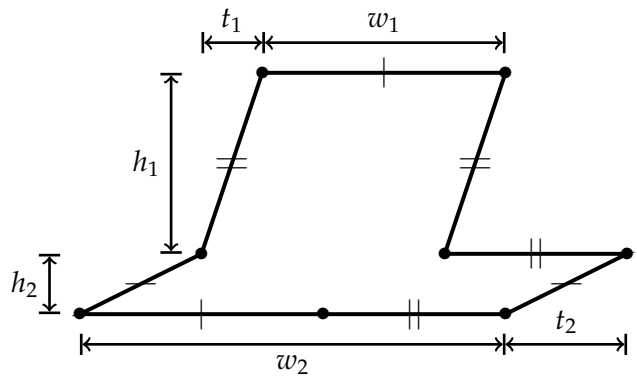
\begin{figure}[t]
    \centering
    \begin{tikzpicture}[scale = 0.8, line width = 1.5pt]
        \draw (0,0) -- node[rotate = 90]{$|$}(2,1) -- node[rotate = 90]{$||$}(3,4) -- node{$|$}(7,4) -- node[rotate = 90]{$||$}(6,1) -- node{$||$}(9,1) -- node[rotate = 90]{$|$}(7,0) -- node{$||$}(4,0) -- node{$|$} cycle;
        \draw[|<->|,line width = 1pt] (1.5,1) -- node[left]{$h_{1}$}(1.5,4);
        \draw[|<->|,line width = 1pt] (-0.5,0) -- node[left]{$h_{2}$}(-0.5,1);
        \draw[|<->|,line width = 1pt] (3,4.5) -- node[above]{$w_{1}$}(7,4.5);
        \draw[|<->|,line width = 1pt] (0,-0.5) -- node[below]{$w_{2}$}(7,-0.5);
        \draw[|<->,line width = 1pt] (2,4.5) -- node[above]{$t_{1}$}(3,4.5);
        \draw[<->|,line width = 1pt] (7,-0.5) -- node[above]{$t_{2}$}(9,-0.5);
        \foreach \x/\y in {0/0,2/1,3/4,7/4,6/1,9/1,7/0,4/0}{
            \node at (\x,\y) {$\bullet$};
        }
    \end{tikzpicture}
    \caption{Two-cylinder surface parameters in $\calH(2)$.}
    \label{fig:H2-params}
\end{figure}

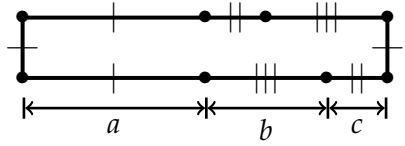
\begin{figure}[t]
    \centering
    \begin{tikzpicture}[scale = 0.8, line width = 1.5pt]
        \draw (0,0) -- node[rotate = 90]{$|$}(0,1) -- node{$|$}(3,1) -- node{$||$}(4,1) -- node{$|||$}(6,1) -- node[rotate = 90]{$|$}(6,0) -- node{$||$}(5,0) -- node{$|||$}(3,0) -- node{$|$} cycle;
        \draw[|<->,line width = 1pt] (0,-0.5) -- node[below]{$a$}(3,-0.5);
        \draw[|<->,line width = 1pt] (3,-0.5) -- node[below]{$b$}(5,-0.5);
        \draw[|<->|,line width = 1pt] (5,-0.5) -- node[below]{$c$}(6,-0.5);
        \foreach \x/\y in {0/0,0/1,3/1,4/1,6/1,6/0,5/0,3/0}{
            \node at (\x,\y) {$\bullet$};
        }
    \end{tikzpicture}
    \caption{A one-cylinder cusp representative in $\calH(2)$.}
    \label{fig:H2-1-cyl}
\end{figure}

The \emph{cusp} of an origami, $X$, is its orbit under the action of the horizontal shear $T$ and the \emph{cusp width} is the size of this orbit (i.e., the minimal $i$ for which $T^{i}(X) = X$).

If $X$ is a two-cylinder surface with parameters $(w_1,h_1,t_1,w_2,h_2,t_2)$ then it has cusp width
\[\text{lcm}\left(\frac{w_{1}}{\gcd(w_{1},h_{1})},\frac{w_{2}}{\gcd(w_{2},h_{2})}\right).\]
This follows from the fact that $T$ acts on the twist parameters as $t_i \mapsto (t_i + h_i)\!\!\mod w_i$. The \textit{cusp representative} of $X$ is the unique surface in the cusp with $0\leq t_{i} < \gcd(w_{i},h_{i})$ (cf. \cite[Lemma 3.1]{HL}).

Every cusp of a one-cylinder origami has a representative, as shown in Figure~\ref{fig:H2-1-cyl}, with saddle connections of lengths $a,b,c>0$ such that $n = a+b+c$ and $\gcd(a,b,c) = 1$. For $n\geq 4$, the cusp width is $n$ (for $n = 3$ the surface with $(a,b,c) = (1,1,1)$ has cusp width 1). Note that the cusp representative has two cylinders in the vertical direction so that applying $R$ (rotation by $\pi/2$) reaches a two-cylinder surface.

\section{Butterfly moves and McMullen's algorithm}\label{sec:butterfly}

In this section, we remind the reader of the details of McMullen's classification and describe how we turn this into an algorithm for moving within the $\SL(2,\Z)$-orbit of an origami.

\subsection{Prototypes and butterfly moves}\label{subsec:proto}

A \teichmuller curve is an algebraic and isometric immersion of a finite-volume hyperbolic Riemann surface into the moduli space $\mathcal{M}_{g}$ of Riemann surfaces of genus $g$. McMullen~\cite{McM,McM06} classified all of the primitive \teichmuller curves in genus two. The main source of such \teichmuller curves are the so-called Weierstrass curves $W_{D}$, parameterised by integers (called \emph{discriminants}) $D \geq 5$ with $D \equiv 0$ or 1 modulo 4. These curves consist of those Riemann surfaces $M \in \mathcal{M}_{2}$ whose Jacobians admit real multiplication by the quadratic order $\mathcal{O}_{D} :=\Z[x]/\langle x^2+bx+c\rangle, b,c\in\Z$ with $D=b^2-4c$, and for which there exists a holomorphic one-form $\omega$ on $M$ such that $(M,\omega) \in\calH(2)$ and $\mathcal{O}_{D}\cdot\omega \subset \C\cdot\omega$ (such forms are said to be \emph{eigenforms} for real multiplication by $\mathcal{O}_{D}$).

A translation surface $X = (M,\omega) \in\calH(2)$ projects to $W_{n^{2}}$ if and only if $X$ is in the $\GL^+(2,\R)$-orbit of an $n$-squared origami. The classification of the $\SL(2, \Z)$-orbits of primitive origamis in $\calH(2)$ is equivalent to the fact that $W_{n^{2}}$ is connected for $n = 3$ and $n \geq 4$ even, and has two connected components for $n \geq 5$ odd.

McMullen~\cite[Section 3]{McM} defines a 4-tuple $(a,b,c,e)$ to be a \emph{prototype of discriminant $D$} if we have
\begin{itemize}
    \item $D = e^{2}+4bc$;
    \item $0<b,c$;
    \item $0\leq a < \gcd(b,c)$;
    \item $c+e < b$; and
    \item $\gcd(a,b,c,e) = 1$.
\end{itemize}
We collect these in the set $\mathcal{P}_{D}$. A prototype is said to be \emph{reduced} if it has the form $(0,b,1,e)$. The set of reduced prototypes is denoted by $\mathcal{S}_{D}$.

Any translation surface $(M,\omega)\in\Omega W_{D}$ splits in infinitely many ways as a connected sum of elliptic curves:
$$(M,\omega) = (E_{1},\omega_{1})\underset{I}{\#}(E_{2},\omega_{2}),$$
where $I\subset \C$ is an interval that projects to a closed loop in $E_{1}$ and to an embedded interval in $E_{2}$.

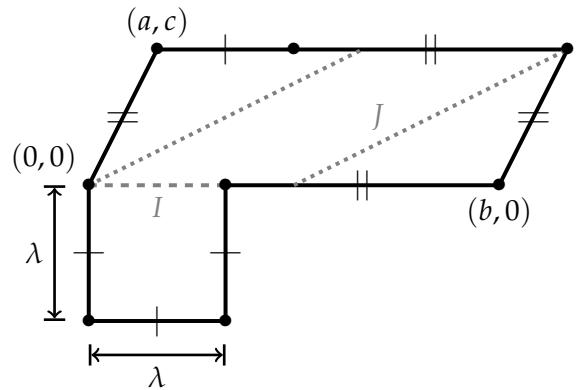
\begin{figure}[b]
    \centering
    \begin{tikzpicture}[scale = 0.9, line width = 1.5pt]
        \draw (0,0) -- (2,0) -- (2,2) -- (6,2) -- (7,4) -- (1,4) -- (0,2) -- cycle;
        \draw[dashed, gray] (0,2)--(2,2);
        \draw[dotted, gray] (0,2)--(4,4);
        \draw[dotted, gray] (3,2)--(7,4);
        \draw[|<->|,line width = 1pt] (0,-0.5) -- node[below]{$\lambda$}(2,-0.5);
        \draw[|<->|,line width = 1pt] (-0.5,0) -- node[left]{$\lambda$}(-0.5,2);
        \node[above left] at (0,2) {$(0,0)$};
        \node[below, gray] at (1,2) {$I$};
        \node[left, gray] at (4.5,3) {$J$};
        \node[below] at (6,2) {$(b,0)$};
        \node[above] at (1,4) {$(a,c)$};
        \node at (1,0) {$|$};
        \node at (2,4) {$|$};
        \node at (4,2) {$||$};
        \node at (5,4) {$||$};
        \node[rotate = 90] at (0,1) {$|$};
        \node[rotate = 90] at (2,1) {$|$};
        \node[rotate = 90] at (0.5,3) {$||$};
        \node[rotate = 90] at (6.5,3) {$||$};
        \foreach \x/\y in {0/0,2/0,2/2,6/2,7/4,3/4,1/4,0/2}{
            \node at (\x,\y) {$\bullet$};
        }
    \end{tikzpicture}
    \caption{The surface associated to the prototypical splitting $(a,b,c,e)$. The splitting realises the surface $(M,\omega)$ as $(E_{1},\omega_{1})\underset{I}{\#}(E_{2},\omega_{2})$. A butterfly move changes the splitting to $(F_{1},\eta_1)\underset{J}{\#}(F_{2},\eta_2)$ with the interval $J = [0,(b,0) + q(a,c)]\subset\C$.}
    \label{fig:prototype}
\end{figure}

A prototype $(a,b,c,e)$ corresponds to a \textit{prototypical splitting} of an eigenform $(M,\omega)$ as a connected sum of elliptic curves in the following way. First, let $\lambda = (e+\sqrt{D})/2$ and consider the interval $I = [0,\lambda]\subset \C$. Define $E_1 = \C/\Lambda_{1}$ and $E_2 = \C/\Lambda_{2}$ by setting
\[\Lambda_1 = \Z(\lambda,0)\oplus\Z(0,\lambda)\,\,\,\,\,\,\text{and}\,\,\,\,\,\,\Lambda_2 = \Z(b,0)\oplus\Z(a,c).\]
The conditions on the the prototype $(a,b,c,e)$ above give us that $I$ projects to a closed curve in $E_{1}$ and is embedded in $E_{2}$. We then define the prototypical splitting associated to $(a,b,c,e)$ to be the eigenform obtained as a connected sum of the elliptic curves over $I$:
$$(M,\omega) = (E_{1},\omega_{1})\underset{I}{\#}(E_{2},\omega_{2}).$$
See Figure~\ref{fig:prototype} (ignore the interval $J$ in the figure for now).

Given a prototypical splitting $(M,\omega) = (E_{1},\omega_1)\underset{I}{\#}(E_{2},\omega_2)$, McMullen defines the \emph{butterfly move} $B_{q}$, for some $q\in\{1,2,\ldots\}\cup\{\infty\}$, to be a change of splitting to $$(M,\omega) = (F_{1},\eta_1)\underset{J}{\#}(F_{2},\eta_2)$$ with the interval $J = [0,(b,0) + q(a,c)]$ for finite $q$ or $J = [0,(a,c)]$ for $q=\infty$. See Figure~\ref{fig:prototype}. Note that this operation is only possible for certain admissible $q$ and, for such $q$, the interval $J$ projects to a closed loop now in $E_{2}$. By applying elements of $\GL^+(2,\R)$, we can then move to the prototypical splitting corresponding to this new splitting interval and obtain the associated prototype $(a',b',c',e')$. The corresponding prototypical splitting exists and is unique by~\cite[Theorem 3.1]{McM}.

For a given prototype, the set of admissible $q$ values is
\[\{q\,\mid\,q\in\N, (e+2qc)^{2} < D\}\cup\{\infty\}.\]
Observe that $q = 1$ is always admissible since
\[(e+2c)^{2} = e^2 + 4ec+4c^{2} = e^{2}+4(e+c)c < e^{2}+4bc = D.\] Admissibility is equivalent to requiring that the projection of the interval $[0,(b,0) + q(a,c)]$ (or $[0,(a,c)]$ for $q = \infty$) only intersects the projection of $I = [0,\lambda]$ at 0.

Given a $q$ value that is admissible for $(a,b,c,e)$, McMullen~\cite[Section 7]{McM} determined that the butterfly move $B_{q}$ has the following effects:

\begin{itemize}
    \item if $q$ is finite, then $B_{q}(a,b,c,e) = (a',b',c',e')$ with $c' = \gcd(qc,b+qa)$ and $e' = -e-2qc$, with $b'$ determined from the condition that $D = (e')^2 + 4b'c'$. McMullen does not determine a formula for $a'$.
    \item if $q = \infty$, then $B_{q}(a,b,c,e) = (a',b',c',e')$ with $c' = \gcd(a,c)$ and $e' = -e-2c$, with $b'$ determined from these values as above. Once again, no formula for $a'$ is given.
\end{itemize}

Since we need to understand $a'$ in order to bound the number of butterfly moves, we will unpack further the action of the butterfly move on the prototype $(a,b,c,e)$. In doing so, we will also see the proof of the above formulae. Indeed, the following is essentially a rewriting of the proof of~\cite[Theorem 7.3]{McM}.

Firstly, recall that the butterfly move $B_{q}$ for finite $q$ sends a splitting $$(M,\omega) = (E_{1},\omega_1)\underset{I}{\#}(E_{2},\omega_2)$$ over $I = [0,\lambda]$ with homology
$$H_{1}(M,\Z) = H_{1}(E_{1},\Z)\oplus H_{1}(E_{2},\Z) = \large\left(\Z(\lambda,0)\oplus\Z(0,\lambda)\large\right)\oplus\large\left(\Z(b,0)\oplus\Z(a,c)\large\right)$$
to a splitting $(M,\omega) = (F_{1},\eta_1)\underset{J}{\#}(F_{2},\eta_2)$ over $J = [0,(b,0) + q(a,c)]$ with homology
$$H_{1}(M,\Z) = H_{1}(F_{1},\Z)\oplus H_{1}(F_{2},\Z).$$
We then apply elements of $\GL^+(2,\R)$ to move to the corresponding prototypical splitting which will have an associated prototype $(a',b',c',e')$.  The homology of this prototypical splitting will be
$$\large\left(\Z(\lambda',0)\oplus\Z(0,\lambda')\large\right)\oplus\large\left(\Z(b',0)\oplus\Z(a',c')\large\right)$$
with $\lambda' = (e'+\sqrt{D}/2)$.

To carry out the above, we first take the basis of $H_{1}(F_{1},\Z)$ to be the columns of the matrix
\[L_{1} = \begin{pmatrix}
    b+qa &  \lambda+a \\
    qc & c
\end{pmatrix}\]
and the basis of $H_{1}(F_{2},\Z)$ to be the columns of the matrix
\[L_{2} = \begin{pmatrix}
    \lambda &  b+qa \\
    0 & \lambda+qc
\end{pmatrix}.\]
See the left of Figure~\ref{fig:F-bases}.

\begin{figure}[t]
    \centering
    \begin{minipage}{0.45\textwidth}
    \centering
    \begin{tikzpicture}[scale = 0.9, line width = 1.5pt]
        \draw (0,0) -- (2,0) -- (2,2) -- (6,2) -- (7,4) -- (1,4) -- (0,2) -- cycle;
        \draw[line width = 0pt, fill = blue, opacity = 0.3] (0,2) -- (1/3,2+2/3) -- (3,4) -- (4,4) -- cycle;
         \draw[line width = 0pt, fill = blue, opacity = 0.3] (2,2) -- (6,4) -- (7,4) -- (3,2) -- cycle;
         \draw[line width = 0pt, fill = blue, opacity = 0.3] (5,2) -- (6+1/3,2+2/3) -- (6,2) -- cycle;
        \draw[dotted, gray] (0,2)--(4,4);
        \draw[dotted, gray] (3,2)--(7,4);
        \draw[dotted, gray] (1/3,2+2/3)--(3,4);
        \draw[dotted, gray] (2,2)--(6,4);
        \draw[dotted, gray] (5,2)--(6+1/3,2+2/3);
        \draw[blue] (0,2) -- (4,4);
        \draw[blue] (3,2) -- (6,3.5);
        \draw[blue, ->] (0,2) -- (2.9,3.95);
        \draw[red, ->, bend left = 30] (0,0) to (1.95,0.05);
        \draw[red, bend left = 13, postaction={decorate, decoration={markings, mark=at position 0.6 with {\arrow{>}}}}] (0,0) to (4.5,4);
        \draw[red, postaction={decorate, decoration={markings, mark=at position 0.5 with {\arrow{>}}}}] (3.5,2) -- (6.85,3.925);
        \draw[blue, ->] (6,3.5) -- (6.9,3.95);
        \draw[|<->|,line width = 1pt] (0,-0.5) -- node[below]{$\lambda$}(2,-0.5);
        \draw[|<->|,line width = 1pt] (-0.5,0) -- node[left]{$\lambda$}(-0.5,2);
        \node[above left] at (0,2) {$(0,0)$};
        \node[below] at (6,2) {$(b,0)$};
        \node[above] at (1,4) {$(a,c)$};
        \node at (1,0) {$|$};
        \node at (2,4) {$|$};
        \node at (4,2) {$||$};
        \node at (5,4) {$||$};
        \node[rotate = 90] at (0,1) {$|$};
        \node[rotate = 90] at (2,1) {$|$};
        \node[rotate = 90] at (0.5,3) {$||$};
        \node[rotate = 90] at (6.5,3) {$||$};
        \foreach \x/\y in {0/0,2/0,2/2,6/2,7/4,3/4,1/4,0/2}{
            \node at (\x,\y) {$\bullet$};
        }
    \end{tikzpicture}
    \end{minipage}\hfill
    \begin{minipage}{0.45\textwidth}
    \centering
    \hspace*{-1.35cm}
    \begin{tikzpicture}[scale = 0.9, line width = 1.5pt]
        \draw (0,0) -- (2,0) -- (2,2) -- (6,2) -- (7,4) -- (1,4) -- (0,2) -- cycle;
        \draw[line width = 0pt, fill = blue, opacity = 0.3] (2,2) -- (3,4) -- (7,4) -- (6,2) -- cycle;
        \draw[dotted, gray] (2,2)--(3,4);
        \draw[blue, ->, bend right = 20] (6,2) to (2.1,2.1);
        \draw[blue, ->, bend left  = 30] (6,2) to (6.9,3.95);
        \draw[red, ->, bend right = 30] (2,0) to (0.05,0.05);
        \draw[red, ->, bend left = 35] (2,0) to (2.9,3.95);
        \draw[|<->|,line width = 1pt] (0,-0.5) -- node[below]{$\lambda$}(2,-0.5);
        \draw[|<->|,line width = 1pt] (-0.5,0) -- node[left]{$\lambda$}(-0.5,2);
        \node[above left] at (0,2) {$(0,0)$};
        \node[below] at (6,2) {$(b,0)$};
        \node[above] at (1,4) {$(a,c)$};
        \node at (1,0) {$|$};
        \node at (2,4) {$|$};
        \node at (4,2) {$||$};
        \node at (5,4) {$||$};
        \node[rotate = 90] at (0,1) {$|$};
        \node[rotate = 90] at (2,1) {$|$};
        \node[rotate = 90] at (0.5,3) {$||$};
        \node[rotate = 90] at (6.5,3) {$||$};
        \foreach \x/\y in {0/0,2/0,2/2,6/2,7/4,3/4,1/4,0/2}{
            \node at (\x,\y) {$\bullet$};
        }
    \end{tikzpicture}
    \end{minipage}
    \caption{\textit{Left:} The torus $F_1$ of the new splitting is shaded. The basis $(b+qa,qc), (\lambda+a,c)$ for $H_1(F_1,\Z)$ is shown in blue. The basis $(\lambda,0), (b+qa,\lambda+qc)$ for $H_1(F_2,\Z)$ is shown in red. \textit{Right:} A similar diagram for the choice of $q = \infty$.}
    \label{fig:F-bases}
\end{figure}

Set $e' = -e-2qc$ and $\lambda' = (e'+\sqrt{D})/2$. To obtain the homological splitting $\Z(\lambda',0)\oplus\Z(0,\lambda')$ for the first summand, we apply $\lambda'L_{1}^{-1}$ to our surface as an element of $\GL^+(2,\R)$. This sends the basis $L_{2}$ to
\[N = \lambda'L_{1}^{-1}L_{2} = \begin{pmatrix}
    c & -a-e-qc \\
    -qc & b+qa
\end{pmatrix}.\]
Recall that we are aiming for a basis on this second summand of the form
\[\begin{pmatrix}
    b' & a' \\
    0 & c'
\end{pmatrix},\]
with $(a',b',c',e')$ satisfying the conditions of being a prototype.

We can make the lower left entry of $N$ equal to 0 and the lower right entry equal to $\gcd(qc,b+qa)$ by applying \textit{basis reduction}. That is, for a basis matrix $A = (a_{ij})_{i,j = 1,2}$, if $|a_{21}| \geq |a_{22}|$, let $u = -\text{sgn}(a_{21}\cdot a_{22})$ and apply $\begin{pmatrix}1 & 0 \\ u & 1 \end{pmatrix}$ \textit{on the right}, otherwise apply $\begin{pmatrix}1 & u \\ 0 & 1 \end{pmatrix}$ \textit{on the right}. Terminate the process when the lower left entry is 0. We will be left with the lower right entry equal to $\gcd(a_{21},a_{22})$. Following this, an application \textit{on the right} of $\begin{pmatrix} 1 & n \\ 0 & 1 \end{pmatrix}$, for an appropriate choice of $n$, will allow us to reduce the upper right entry modulo the upper left (which will always be positive in our case). Since the action is on the right, all of these operations are just change of basis operations within the homology of the surface. We are not applying elements of $\GL^+(2,\R)$ to our surface here.

Applying this process to $N$, we obtain a basis of the form
\[\begin{pmatrix}
    b' & a^*\\
    0 & c'
\end{pmatrix}\]
with $c' = \gcd(qc,b+qa)$ and with $0\leq a^*< b'$. Finally, by applying powers of $\begin{pmatrix} 1 & 1 \\ 0 & 1 \end{pmatrix}$ on the left (so acting on the surface by $\GL^+(2,\R)$) and also on the right (as a change of basis operator), we can obtain a basis of the form
\[\begin{pmatrix}
    b' & a' \\
    0 & c'
\end{pmatrix}\]
with $0 \leq a' < \gcd(b',c')$ and the remaining prototype conditions for $(a',b',c',e')$ being satisfied.

If $q = \infty$, then (see the right of Figure~\ref{fig:F-bases}) we have, with $e' = -e -2c$ and $\lambda' = (e'+\sqrt{D})/2$, 
\[L_{1} = \begin{pmatrix}
    a & \lambda - b \\ 
    c & 0
\end{pmatrix}, L_{2} = \begin{pmatrix}
    a & -\lambda \\
    \lambda + c & 0
\end{pmatrix}, \text{ and }N = \lambda'L_{1}^{-1}L_{2} = \begin{pmatrix}
    b-e-c & 0 \\
    a & c
\end{pmatrix}.\]
Note that, if $a = 0$, then we have $a' = 0$, $b' = b-e-c$ and $c' = c$. Otherwise, we again apply the basis reduction and horizontal shearing procedure described above to get a basis
\[\begin{pmatrix}
    b' & a' \\
    0 & c'
\end{pmatrix}\]
with $c' = \gcd(a,c)$ and $0\leq a' < \gcd(b',c')$.

Let us consider the example of the prototype $(1,24,2,2)$. We can determine that $D = e^2+4bc = 296 = 14^2$ and that, for $q\geq 1$, $(e+2qc)^2 = (2+4q)^2 < 14^2$ if and only if $q = 1,2$. So the set of admissible $q$ is $\{1,2,\infty\}$. We also have $\lambda = (e+\sqrt{D})/2 = 8$. Let us apply the butterfly move $B_{2}$. From McMullen's formulae, we know that $e' = -e-2qc = -10$ and $\lambda' = (e'+\sqrt{D})/2 = 2$. The initial basis matrices are 
\[L_1 = \begin{pmatrix}
    b+qa & \lambda + a \\
    qc & c
\end{pmatrix} = \begin{pmatrix}
    26 & 9 \\
    4 & 2
\end{pmatrix}\,\,\,\text{and}\,\,\,L_{2} = \begin{pmatrix}
    \lambda & b + qa \\
    0 & \lambda + qc
\end{pmatrix} = \begin{pmatrix}
    8 & 26 \\
    0 & 12
\end{pmatrix}.\]
The matrix $N = \lambda'L_{1}^{-1}L_{2}$ considered above is
$$N = \begin{pmatrix}
    c & -a-e-qc \\
    -qc & b+qa
\end{pmatrix} = \begin{pmatrix}
    2 & -7 \\
    -4 & 26
\end{pmatrix}.$$
Applying basis reduction gives
\[\begin{pmatrix}
    2 & -7 \\
    -4 & 26
\end{pmatrix}\rightarrow\begin{pmatrix}
    2 & -7 \\
    -4 & 26
\end{pmatrix}\begin{pmatrix} 1 & 6 \\ 0 & 1\end{pmatrix} = \begin{pmatrix}
    2 & 5 \\
    -4 & 2
\end{pmatrix}\rightarrow\begin{pmatrix}
    2 & 5 \\
    -4 & 2
\end{pmatrix}\begin{pmatrix} 1 & 0 \\ 2 & 1\end{pmatrix}=\begin{pmatrix}
    12 & 5 \\
    0 & 2
\end{pmatrix}.\]
Finally, we apply $\begin{pmatrix} 1 & -2 \\ 0 & 1\end{pmatrix}$ to our surface and obtain the basis
\[\begin{pmatrix} 1 & -2 \\ 0 & 1\end{pmatrix}\begin{pmatrix}
    12 & 5 \\
    0 & 2
\end{pmatrix} = \begin{pmatrix} 12 & 1 \\ 0 & 2\end{pmatrix}.\]
Hence, $B_2(1,24,2,2) = (1,12,2,-10)$.

\begin{remark}\label{rem:GL-vs-SL}
    Notice that the butterfly move in this example is carried out by the matrix
    \[\begin{pmatrix} 1 & -2 \\ 0 & 1\end{pmatrix}\cdot\lambda'L_{1}^{-1} = \frac{1}{8}\begin{pmatrix} 10 & -61 \\ -4 & 26\end{pmatrix}\in\GL^{+}(2,\R).\]
    In particular, notice that this matrix is not in $\SL(2,\Z)$. This will be true in general since the prototypical splittings for $(a,b,c,e)$ and $(a',b',c',e')$ have areas $\lambda^2 + bc = \lambda\sqrt{D}$ and $(\lambda')^2 + b'c' = \lambda'\sqrt{D}$ which will not typically be equal. So, the determinant of the butterfly move matrix $\frac{\lambda'}{\lambda}$ will not typically be one. In fact, you can check that this happens only when $e < 0$ and $q = -e/c$ or $e = -c$ and $q = \infty$. This means that we will not be able to move around the $\SL(2,\Z)$-orbits by applying butterfly move matrices directly.
\end{remark}

\subsection{An outline of McMullen's proof}

Now that we have setup the butterfly move machinery, we will remind the reader of how McMullen uses butterfly moves in his classification of the $\GL^{+}(2,\R)$-orbits in $\calH(2)$. In the next section, we describe how to turn this proof into a way of moving within an $\SL(2,\Z)$-orbit of an origami.

McMullen's proof can be summarised along the following lines:
\begin{enumerate}
    \item Show that every translation surface in $\Omega W_{D}$ has a prototypical splitting corresponding to some prototype $(a,b,c,e)\in\mathcal{P}_{D}$ in its $\GL^{+}(2,\R)$-orbit.
    \item Prove that there exists a sequence of butterfly moves taking this prototype $(a,b,c,e)$ to some reduced prototype $(0,b',1,e')\in \mathcal{S}_{D}$.
    \item Finally, show that all reduced prototypes in $\mathcal{S}_{D}$ can be connected by butterfly moves (or that $\mathcal{S}_{D}$ has two components when $D\equiv 1 \!\!\mod 8$). 
\end{enumerate}

\subsection{The implied $\boldsymbol{\SL(2,\Z)}$-orbit algorithm}

Here, we describe our algorithm for using the above proof of McMullen to connect origamis in an $\SL(2,\Z)$-orbit.

We consider some $n$-squared origami $X$ and will connect it to $$\left(\left(\frac{n}{2}+1,\frac{n}{2}+2,\ldots,n\right),\left(1,\frac{n}{2}+1,\frac{n}{2},\frac{n}{2}-1,\ldots,2\right)\right)$$ if $n$ is even; to $$\left(\left(\frac{n+1}{2}+1,\frac{n+1}{2}+2,\ldots,n\right),\left(1,\frac{n+1}{2}+1,\frac{n+1}{2},\frac{n+1}{2}-1,\ldots,2\right)\right)$$ if $n \equiv 1\!\!\mod 4$ with HLK-invariant $(2,[1,1,1])$ or if $n \equiv 3\!\!\mod 4$ with HLK-invariant $(0,[3,1,1])$; or to $$\left(\left(\frac{n-1}{2}+1,\frac{n-1}{2}+2,\ldots,n\right),\left(1,\frac{n-1}{2}+1,\frac{n-1}{2},\frac{n-1}{2}-1,\ldots,2\right)\right)$$ if $n \equiv 1\!\!\mod 4$ with HLK-invariant $(0,[3,1,1])$ or if $n \equiv 3\!\!\mod 4$ with HLK-invariant $(2,[1,1,1])$.

The algorithm will go as follows:
\begin{enumerate}
    \item If $X$ has one horizontal cylinder, move in the cusp to the cusp representative. This origami now has two cylinders in the vertical direction (see Figure~\ref{fig:H2-1-cyl}). Apply rotation by $\pi/2$, $R = ST^{-1}S$, to move to a two-cylinder origami.
    \item Determine the prototype $(a,b,c,e)$ corresponding to this two-cylinder cusp.
    \item From McMullen's proof, there exists a sequence of butterfly moves $B_{q}$ taking this prototype to a reduced prototype $(0,b',1,e')$. For each butterfly move, \textit{shadow the butterfly move} inside the $\SL(2,\Z)$-orbit. We describe how to do this in the following subsection.
    \item Finally, shadow a sequence of butterfly moves to arrive at the reduced prototype $(0,\frac{n^{2}}{4},1,0)$ if $n$ is even; or to one of $(0,\frac{n^2-1}{4},1,\pm1)$ if $n$ is odd. This will leave us inside the cusp of one of the origamis listed above. We finish by moving inside the cusp to arrive at the target origami.
\end{enumerate}

That the origamis listed above correspond to the prototypes $(0,\frac{n^{2}}{4},1,0)$, $(0,\frac{n^2-1}{4},1,1)$ and $(0,\frac{n^2-1}{4},1,-1)$, respectively, follows from Lemma~\ref{lem:H2-red} below. We shall call these prototypes the \textit{target prototypes} and the associated origamis the \textit{target origamis}.

\subsection{Shadowing butterfly moves inside an $\boldsymbol{\SL(2,\Z)}$-orbit}\label{subsec:shadow}

Recall from Remark~\ref{rem:GL-vs-SL} that the butterfly move matrices applied to prototypical splittings will rarely live in $\SL(2,\Z)$. We now describe, via a concrete example, how to carry out the \textit{shadowing of a butterfly move} alluded to above. This applies a sequence of matrices in $\SL(2,\Z)$ to move between a pair of two-cylinder origamis that correspond to prototypes being acted upon by a butterfly move.

Consider the origami $$O = ((1,2)(3,4)(5,6)(7,8)(9,10,11,12,13,14),(1,3,5,7,9)(2,4,6,8,14,13,12,11,10))$$ shown on the left of Figure~\ref{fig:sts-example-1}. We must first determine the corresponding prototype.

\begin{figure}[b]
    \centering
    \begin{tikzpicture}[scale = 0.8, line width = 1.5pt]
        \draw (0,0) -- (2,0) -- (3,4) -- (7,4) -- (8,5) -- (2,5) -- (1,4) -- cycle;
        \draw[gray,dashed] (1,0) -- (1,4);
        \draw[gray,dashed] (2,0) -- (2,5);
        \foreach \i in {3,4,5,6,7}{
            \draw[gray,dashed] (\i,4) -- (\i,5);
        }
        \foreach \j in {1,2,3,4}{
            \draw[gray,dashed] (\j*0.25,\j) -- (\j*0.25+2,\j);
        }
        \foreach \x/\y in {0/0,2/0,3/4,7/4,8/5,4/5,2/5,1/4}{
            \node at (\x,\y) {$\bullet$};
        }
        \node[gray] at (0.5,0.5) {\footnotesize $1$};
        \node[gray] at (1.5,0.5) {\footnotesize $2$};
        \node[gray] at (0.5,1.5) {\footnotesize $3$};
        \node[gray] at (1.5,1.5) {\footnotesize $4$};
        \node[gray] at (1.5,2.5) {\footnotesize $6$};
        \node[gray] at (2.5,2.5) {\footnotesize $5$};
        \node[gray] at (1.5,3.5) {\footnotesize $8$};
        \node[gray] at (2.5,3.5) {\footnotesize $7$};
        \node[gray] at (1.5,4.5) {\footnotesize $14$};
        \node[gray] at (2.5,4.5) {\footnotesize $9$};
        \foreach \k in {10,11,12,13}{
            \node[gray] at (\k-6.5,4.5) {\footnotesize $\k$};
        }
        \draw (0+9,0) -- (2+9,0) -- (3+9,4) -- (7+9,4) -- (7+9,5) -- (1+9,5) -- (1+9,4) -- (0+9,0);
        \draw[gray,dashed] (1+9,0) -- (1+9,4);
        \draw[gray,dashed] (2+9,0) -- (2+9,5);
        \foreach \i in {2,3,4,5,6}{
            \draw[gray,dashed] (\i+9,4) -- (\i+9,5);
        }
        \foreach \j in {1,2,3,4}{
            \draw[gray,dashed] (\j*0.25+9,\j) -- (\j*0.25+2+9,\j);
        }
        \foreach \x/\y in {9/0,11/0,12/4,16/4,16/5,12/5,10/5,10/4}{
            \node at (\x,\y) {$\bullet$};
        }
        \node[gray] at (0.5+9,0.5) {\footnotesize $1$};
        \node[gray] at (1.5+9,0.5) {\footnotesize $2$};
        \node[gray] at (0.5+9,1.5) {\footnotesize $3$};
        \node[gray] at (1.5+9,1.5) {\footnotesize $4$};
        \node[gray] at (1.5+9,2.5) {\footnotesize $6$};
        \node[gray] at (2.5+9,2.5) {\footnotesize $5$};
        \node[gray] at (1.5+9,3.5) {\footnotesize $8$};
        \node[gray] at (2.5+9,3.5) {\footnotesize $7$};
        \node[gray] at (1.5+9,4.5) {\footnotesize $14$};
        \node[gray] at (2.5+9,4.5) {\footnotesize $9$};
        \foreach \k in {10,11,12,13}{
            \node[gray] at (\k-6.5+9,4.5) {\footnotesize $\k$};
        }
        \draw[line width = 0pt, fill = blue, opacity = 0.3] (12,4) -- (15,5) -- (16,5) -- (13,4) -- (12,4);
        \draw[line width = 0pt, fill = blue, opacity = 0.3] (15,4) -- (16,4+1/3) -- (16,4) -- (15,4);
        \draw[line width = 0pt, fill = blue, opacity = 0.3] (10,4) -- (10,4+1/3) -- (12,5) -- (13,5) -- (10,4);
        \draw[purple] (10,4) -- (13,5);
        \draw[purple] (13,4) -- (16,5);
    \end{tikzpicture}
    \caption{The origami $O$ and its cusp representative. The direction corresponding to the butterfly move $B_{2}$ is also shown, with the resulting torus $F_{1}$ shaded in blue.}
    \label{fig:sts-example-1}
\end{figure}
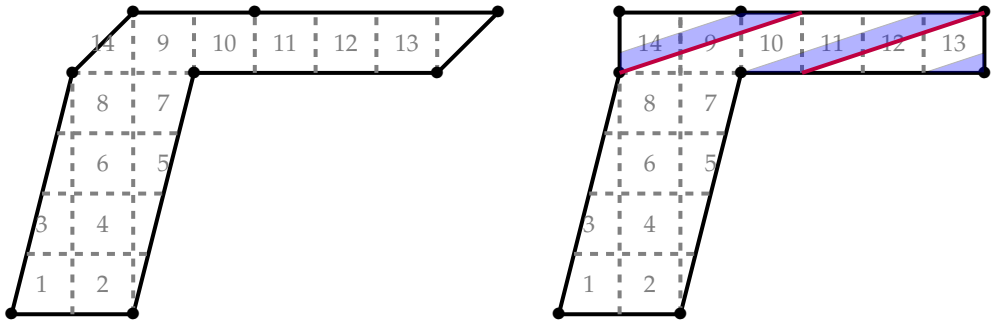

\begin{figure}[b]
    \centering
    \begin{tikzpicture}[scale = 0.5, line width = 1.5pt]
        \draw (0,0) -- (4,0) -- (6,4) -- (14,4) -- (14,5) -- (2,5) -- (2,4) -- cycle;
        \node at (2,-0.5) {$\lambda = 4$};
        \node at (-0.25,2) {$\lambda = 4$};
        \node[left] at (2,4) {$(0,0)$};
        \node[below] at (14,4) {$(12,0)$};
        \node[above] at (2,5) {$(0,1)$};
        \foreach \x/\y in {0/0,4/0,6/4,14/4,14/5,6/5,2/5,2/4}{
            \node at (\x,\y) {$\bullet$};
        }
    \end{tikzpicture}
    \begin{tikzpicture}[scale = 0.5, line width = 1.5pt]
        \draw (0,0-14) -- (8,0-14) -- (8,8-14) -- (24,8-14) -- (25,10-14) -- (1,10-14) -- (0,8-14) -- (0,0-14);
        \node at (4,-0.25-14-0.25) {$\lambda = 8$};
        \node at (-0.25-1,4-14) {$\lambda = 8$};
        \node[left] at (0,8-14) {$(0,0)$};
        \node[below] at (24,8-14) {$(24,0)$};
        \node[above] at (1,10-14) {$(1,2)$};
        \foreach \x/\y in {0/-14,8/-14,8/-6,24/-6,25/-4,9/-4,1/-4,0/-6}{
            \node at (\x,\y) {$\bullet$};
        }
        \draw[purple] (0,-6) -- (13,-4);
        \draw[purple] (12,-6) -- (25,-4);
    \end{tikzpicture}
    \caption{Realising the prototype corresponding to $O$. The direction corresponding to the butterfly move $B_{2}$ is also shown.}
    \label{fig:sts-example-2}
\end{figure}
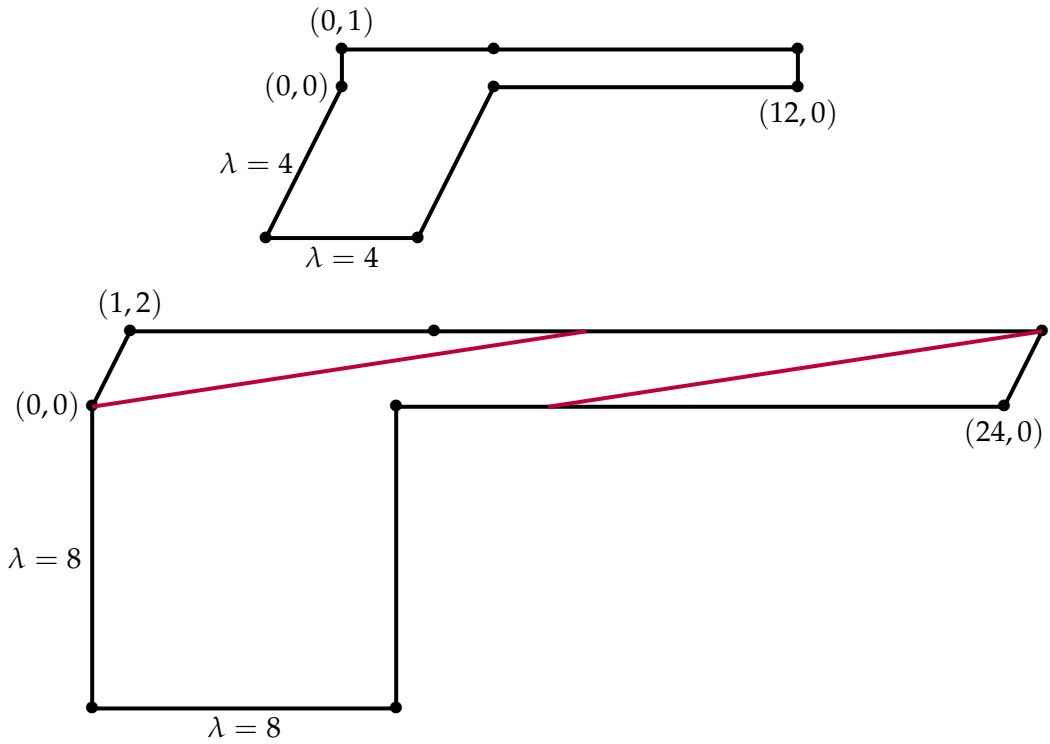

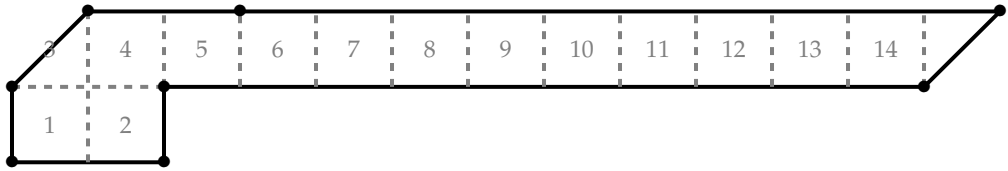
\begin{figure}[t]
    \centering
    \begin{tikzpicture}[scale = 1, line width = 1.5pt]
        \draw (0,0) -- (2,0) -- (2,1) -- (12,1) -- (13,2) -- (1,2) -- (0,1) -- cycle;
        \draw[gray,dashed] (1,0) -- (1,2);
        \draw[gray,dashed] (0,1) -- (2,1);
        \foreach \i in {2,...,12}{
            \draw[gray,dashed] (\i,1) -- (\i,2);
        }
        
        \foreach \x/\y in {0/0,2/0,2/1,12/1,13/2,3/2,1/2,0/1}{
            \node at (\x,\y) {$\bullet$};
        }
        \node[gray] at (0.5,0.5) {\footnotesize $1$};
        \node[gray] at (1.5,0.5) {\footnotesize $2$};
        \foreach \k in {3,...,14}{
            \node[gray] at (\k-2.5,1.5) {\footnotesize $\k$};
        }
    \end{tikzpicture}
    \caption{The cusp representative corresponding to the prototype $(1,12,2,-10)$.}
    \label{fig:sts-example-3}
\end{figure}

First, we find the cusp representative (i.e., the origami in the $T$-orbit with $0\leq t_i < \gcd(w_{i},h_{i})$). Here, we have $(w_1,h_1,t_1,w_2,h_2,t_2) = (2,4,1,6,1,1)$. Recalling that $T^{k}$ sends $t_i$ to $t_i + k\,h_i\!\!\mod w_i$, we arrive at the cusp representative by applying $T^{-1}$ to get the origami $$((1,2)(3,4)(5,6)(7,8)(9,10,11,12,13,14),(1,3,5,7,9,2,4,6,8,14))$$ shown on the right of Figure~\ref{fig:sts-example-1}. Indeed, $t_1$ is fixed and $t_2$ is reduced by 1.

To realise the surface in the form of a prototypical splitting as in Figure~\ref{fig:prototype}, we must make the bottom cylinder square. So we apply the matrix $\begin{pmatrix}
    2 & 0 \\ 
    0 & 1
\end{pmatrix}$. The resulting surface is shown in the top of Figure~\ref{fig:sts-example-2}. We now also need to undo the twist in the bottom cylinder. We would need to apply $\begin{pmatrix}
    1 & -\frac{1}{2} \\ 
    0 & 1
\end{pmatrix}$ to do this. However, this will not preserve the integer values of the vertices of the top cylinder. So we must first scale the surface by $\begin{pmatrix}
    2 & 0 \\ 
    0 & 2
\end{pmatrix}$. We must also then normalise the twist of the top cylinder by applying powers of $T$ to satisfy the requirements on the value $a$ in a prototype. The resulting surface is shown in Figure~\ref{fig:sts-example-2}. At this point, we must check that $e^{2}+4bc = D = n^{2}$. Here, this is true. However, it is possible that the surface requires further scaling before this condition is achieved. We then see that the prototype of the surface is $(1,24,2,2)$ with admissible $q$ values $\{1,2,\infty\}$. This is the prototype of the example considered at the end of Subsection~\ref{subsec:proto}.

Suppose that we want to shadow the butterfly move $B_{2}$. The new splitting interval lies in the direction $(b+2a,2c)$ inside the prototypical splitting surface. Inside the cusp representative of the original origami, this is the direction $(w_{2}+2t_{2},2h_{2}) = (6,2)$. If we make this direction horizontal, then the resulting two-cylinder origami will lie in the cusp associated to the prototype $B_2(1,24,2,2) = (1,12,2,-10)$. Indeed, a butterfly move always makes this direction horizontal before then applying some further scaling and shearing. To make this direction horizontal in the $\SL(2,\Z)$-orbit, we apply $T^{-2}$ followed by $S^{-1}$ and obtain the origami $$((1,2)(3,4,5,6,7,8,9,10,11,12,13,14),(1,3,10,5,12,7,14,9,2,4,11,6,13,8))$$ whose cusp representative
$$((1,2)(3,4,5,6,7,8,9,10,11,12,13,14),(1,3,14,13,12,11,10,9,8,7,6,5,2,4))$$
is shown in Figure~\ref{fig:sts-example-3}. It can be checked that, after applying $\begin{pmatrix}
    1 & 0 \\ 
    0 & 2
\end{pmatrix}$, the corresponding prototype is indeed $(1,12,2,-10)$, as desired.

So the full butterfly move $B_{2}$ was shadowed by applying $T^{-1}$ to reach the cusp representative followed by $S^{-1}\circ T^{-2}$ in order to make the butterfly move direction horizontal and in doing so we landed in the cusp associated to the image prototype.

Such a procedure works in general:
\begin{enumerate}
    \item Move to the cusp representative.
    \item Find the butterfly move direction $(w_{2}+qt_{2},qh_{2})$.
    \item Apply a Euclidean algorithm procedure to make this direction horizontal; i.e., if the vector $(u,v)$ has $u>v$ then apply $T^{-1}$ otherwise apply $S^{-1}$. We end up in the cusp associated to the image prototype.
\end{enumerate}

\section{Diameter bounds}\label{sec:H2-McM}

Here, we bound the number of applications of $T$ or $S$ required to carry out the algorithm of the previous section. In doing so, we obtain diameter bounds for the $\SL(2,\Z)$-orbit graphs.

We will first bound the number of butterfly moves that we will be required to shadow. After this, we bound the number of applications of $T$ or $S$ required to shadow each butterfly move.

\subsection{Bounding the number of butterfly moves}\label{subsec:butterfly-bound}

Here we will argue that McMullen's algorithm requires $O(\log n)$ butterfly moves to reach a reduced prototype and then a further $O(n)$ butterfly moves to reach any target reduced prototype.

\subsubsection{Connecting to reduced prototypes}

Consider a prototype $(a,b,c,e)$. We will argue that $O(\log n)$ repeated applications of $B_{1}$ will reach a prototype of the form $(0,b,1,e)$; i.e., a reduced prototype. This will follow from the fact that at most three applications of $B_1$ achieves a prototype $(a',b',c',e')$ with $c' \leq c/2$.

\indent\paragraph{\textbf{The first application of $\boldsymbol{B_1}$}} After applying $B_{1}$ we obtain the prototype $(a',b',c',e')$ with $c' = \gcd(b+a,c)$. Now either $c' < c$, in which case $c' \leq c/2$ as desired, or $c' = c$. In the latter case, the basis reduction in the butterfly move is carried out as follows:
\[N = \begin{pmatrix}
    c & -a-e-qc \\
    -qc & b+qa
\end{pmatrix} = \begin{pmatrix}
    c & -a-e-c \\
    -c & b+a
\end{pmatrix}\rightarrow\begin{pmatrix}
    b-e-c & -c \\
    0 & c
\end{pmatrix} \rightarrow\begin{pmatrix}
    b-e-c & 0 \\
    0 & c
\end{pmatrix},\]
where the first ``$\rightarrow$" denotes the initial basis reduction and the second corresponds to the reduction $-c \equiv 0 \mod \gcd(c,b-e-c)$. So we have either arrived at a prototype $(a',b',c',e')$ with $c' \leq c/2$, as required, or at a prototype of the form $(0,b',c',e') = (0,b-e-c,c,-e-2c)$.

\indent\paragraph{\textbf{The second application of $\boldsymbol{B_1}$}} Consider the prototype $(0,b',c',e') = (0,b-e-c,c,-e-2c)$ from the previous step. We apply $B_{1}$ again to obtain a prototype $(a'',b'',c'',e'')$. Again, we either have $c'' = \gcd(b',c') < c'=c$, giving us $c'' \leq c/2$, or we have $c'' = c' = c$. In the first case, we are done. In the latter case, by a similar argument to the above, we will have that $(a'',b'',c'',e'') = (0,b'-e'-c',c',-e'-2c')$.

\indent\paragraph{\textbf{The third application of $\boldsymbol{B_{1}}$}} Consider the prototype $$(a'',b'',c'',e'') = (0,b'-e'-c',c',-e'-2c')$$ in the previous step. Recall from the properties of prototypes that $\gcd(a'',b'',c'',d'') = 1$. Therefore, $$1 = \gcd(a'',b'',c'',d'') = \gcd(0,b'-e'-c',c',-e'-2c') = \gcd(c',e'),$$ where the last equality follows from the fact that $\gcd(b',c') = c'$. This then gives us that $\gcd(b'',c'') = \gcd(b'-e'-c',c') = \gcd(c',e') = 1$ and so a third application of $B_{1}$ will arrive at a prototype of the form $(0,b''',1,e''')$. Trivially, $1\leq c/2$ if $c$ was not already equal to 1. \\

To summarise, we can apply $B_{1}$ at most three times to arrive at a prototype whose $c$ value has been divided by at least two. Hence, we can arrive at a reduced prototype in at most $O(\log c)$ applications of $B_{1}$. Now, $n^{2} = D = e^{2}+4bc > c$, so that $\log c = O(\log n)$. Hence, this process requires $O(\log n)$ butterfly moves, all of which are $B_{1}$.

\subsubsection{Connecting reduced prototypes to the target prototype}

The reduced prototypes of discriminant $D$, $\mathcal{S}_{D} := \{(0,b,1,e)\,:\,e^2 + 4b = D\}$, are parameterised by the set $$\{e\equiv D\!\!\mod 2\,:\,e^2 < D\,\,\text{and}\,\,(e+2)^2 < D\}.$$ We will abuse notation and refer to the latter also as $S_{D}$ when there is no confusion. As a result of this parameterisation, and since $D = n^2$ in our case, there are $O(\sqrt{D}) = O(n)$ reduced prototypes. McMullen equips $\mathcal{S}_{D}$ with the equivalence relation produced by butterfly moves that preserve being reduced. That is, $e\sim e'$ if there exists a sequence $e = e_{0}, e_{1},\ldots, e_{k} = e'$, so that $e_{i}\in \mathcal{S}_{D}$ for all $0\leq i\leq k$ and for all $0\leq i \leq k-1$ there exists some $q>0$ such that
$$(0,b_{i+1},1,e_{i+1}) = B_{q}(0,b_{i},1,e_{i})\,\,\text{or}\,\,(0,b_{i},1,e_{i}) = B_{q}(0,b_{i+1},1,e_{i+1})$$
With this relation in mind, McMullen proved the following theorem.

\begin{theorem}[{\cite[Theorem 10.1]{McM}}]
    Assume $D\geq 5$ and $D\neq 9, 49, 73, 121$ or $169$. Then $S_D$ has exactly two components when $D \equiv 1 \!\!\mod 8$, and otherwise just one.
\end{theorem}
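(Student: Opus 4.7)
The plan is to make the butterfly moves on reduced prototypes completely explicit and then reduce the problem to controlling an invariant modulo $4$. Writing $b_e := (D - e^2)/4$, a reduced prototype has the form $(0, b_e, 1, e)$, so it is parameterised by $e \in \mathcal{S}_D$. From the formulas in Subsection~\ref{subsec:proto}, $B_q$ with finite $q$ gives $c' = \gcd(q, b_e)$ and $e' = -e - 2q$, while $B_\infty$ yields $c' = 1$ and $e' = -e - 2$. The image is reduced precisely when $\gcd(q, b_e) = 1$, and admissibility demands $(e + 2q)^2 < D$. In particular $q = 1$ is always available, realising the involution $\sigma : e \mapsto -e - 2$, and the composition $B_{q'} \circ B_q$ is the translation $e \mapsto e + 2(q - q')$. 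Hence the orbit structure is controlled by which parities of $q$ are admissible.

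The key calculation is a parity analysis. When $D \equiv 1 \pmod 8$, every $e \in \mathcal{S}_D$ is odd and $e^2 \equiv 1 \pmod 8$, so $b_e$ is even; this forces $q$ to be odd in every admissible move, whence $-e - 2q \equiv -e - 2 \pmod 4$, and the residue of $e$ modulo $4$ is preserved. This furnishes at least two components in this case. In all other residue classes of $D$, one checks that some $e \in \mathcal{S}_D$ has $b_e$ odd, so $q = 2$ is admissible there; transporting via $q = 1$ to reach such a vertex and then applying $B_2$ effects a shift $e \mapsto e - 2$ (up to $\sigma$), which mixes the two residues modulo $4$ and collapses them into a single potential component.

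The harder half is transitivity within each candidate class. One must show that for $D$ large enough one can step along $\mathcal{S}_D$ by $\pm 2$ (in the mixed case) or by $\pm 4$ via pairs of odd-$q$ moves (in the $D \equiv 1 \pmod 8$ case), the step being permitted as long as the gcd condition $\gcd(q, b_e) = 1$ and the admissibility bound $(e + 2q)^2 < D$ are met. The main obstacle is that a fixed small prime $p$ could divide $b_e$ for several nearby values of $e$, blocking the obvious moves. The remedy is a density argument: the quadratic values $b_e = (D - e^2)/4$ cannot share a fixed prime divisor across all of $\mathcal{S}_D$ once it is large enough, so one can always detour around a blocked prototype by choosing a different $q$ or by first shifting to a nearby $e'$. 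The five exceptional discriminants $D \in \{9, 49, 73, 121, 169\}$ are precisely those where $\mathcal{S}_D$ is too sparse for such detours to exist; a direct enumeration (for example in $\mathcal{S}_{49} = \{-5, -3, -1, 1, 3\}$ the vertex $e = -1$ is fixed by $\sigma$ and isolated from $\{-5, 3\}$, producing an extra component) shows that these cases genuinely fail the cleanest form of the conclusion and must be excluded.
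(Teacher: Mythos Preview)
The paper does not prove this theorem; it is quoted verbatim from McMullen~\cite[Theorem 10.1]{McM} and used as a black box in the diameter estimates. So there is no ``paper's own proof'' to compare against---you are sketching McMullen's argument from scratch.

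Your outline captures the correct skeleton of McMullen's proof: the explicit description of $B_q$ on reduced prototypes is right, and the parity computation showing that $e \bmod 4$ is a genuine invariant when $D \equiv 1 \pmod 8$ (because $b_e$ is forced to be even, hence $q$ must be odd) is exactly the mechanism producing at least two components. Your treatment of $D = 49$ is also accurate.

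The substantive gap is the transitivity half. Saying that ``quadratic values $b_e$ cannot share a fixed prime divisor across all of $\mathcal{S}_D$ once it is large enough'' and that ``one can always detour around a blocked prototype'' is not a proof; it is a restatement of what needs to be shown. The difficulty is that you must exhibit, for each pair of $e$-values in the same putative component, an explicit admissible chain of $B_q$-moves staying inside the reduced locus, and the gcd obstructions interact with the admissibility bound $(e+2q)^2 < D$ in a way that requires genuine case analysis by the residue of $D$ modulo higher powers of $2$. McMullen carries this out with a careful case split (and the exceptional discriminants fall out of that analysis, not merely from a sparsity heuristic). Your density argument as stated would not, for instance, distinguish $D = 73$ or $D = 169$ from the generic case without actually running the computation. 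To turn this into a proof you would need to supply those explicit chains, which is the bulk of the work in~\cite[\S10]{McM}.
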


The two components for $D\equiv 1\!\!\mod 8$ correspond to the orbits distinguished by the HLK-invariant (Hubert--Leli\`evre's $A$ and $B$ orbits).

So we see that, in the general case, each reduced prototype can be connected to the target prototype by a path of undirected butterfly moves between reduced prototypes. Note that, even though butterfly moves are directed, the underlying $\GL^{+}(2,\R)$ matrix (and, for our sake, the shadowing $\SL(2,\Z)$ moves) can be performed in the reverse order. There are $O(n)$ prototypes in each component of $\mathcal{S}_{D}$ and so we require at most $O(n)$ butterfly moves to connect any reduced prototype to the target prototype.

\subsection{The resulting diameter bound}

We now cost the number of applications of $T$ or $S$ required to connect a given origami $X$ to one of the target origamis.

If $X$ has one cylinder, we send it to a two-cylinder origami by first travelling in the cusp to the cusp representative --- this takes $O(n)$ applications of $T$ --- and then we rotate the origami by $\pi/2$ by applying $R = ST^{-1}S$. So, $X$ is taken to a two-cylinder origami in $O(n)$ applications of $T$ or $S$.

The two-cylinder origami is associated to some prototype $(a,b,c,e)$. From the previous subsection, we know that $O(\log n)$ applications of $B_1$ sends this prototype to a reduced prototype. We must cost, in $T$ and $S$, the process of shadowing these butterfly moves.

As discussed in Subsection~\ref{subsec:shadow}, to shadow a butterfly move $B_{q}$ we move inside the two-cylinder cusp to reach the cusp representative --- we do this naively by applying powers of $T$. This process then takes $O(n^{2})$ moves since the cusp has width
\[\text{lcm}\left(\frac{w_{1}}{\gcd(w_{1},h_{1})},\frac{w_{2}}{\gcd(w_{2},h_{2})}\right)\]
and each of $w_1$ and $w_2$ are $O(n)$. This is then followed by a Euclidean algorithm operation to make the butterfly move direction $(w_{2}+qt_{2},qh_{2})$ (or $(t_{2},h_{2})$ for $q = \infty$) horizontal --- this takes $O(\max\{w_{2}+qt_{2},qh_{2}\})$ (or $O(h_2)$ for $q = \infty$) applications of $T$ or $S$. In our case, since we are applying $B_{1}$ and have $\max\{w_{2}+t_{2},h_{2}\} = O(n)$, this process takes $O(n)$ applications of $T$ or $S$. So each $B_1$ takes $O(n^2) + O(n) = O(n^2)$ applications of $T$ or $S$ to shadow.

Since we shadow $B_{1}$ $O(\log n)$ times, the whole process takes $O(n^{2}\log n)$ steps.

Now, we must cost the shadowing of the butterfly moves required to connect the reduced prototypes to the target reduced prototype. Above, we argued that there are at most $O(n)$ butterfly moves required to carry out this step.

Note that, a priori, we might expect that each of these $O(n)$ butterfly moves also costs $O(n^{2})$ applications of $T$ or $S$ to shadow, as above. If this was true, then we would only obtain the trivial diameter bound $O(n^3)$ (recall that $|V| = \Theta(n^{3})$). Thankfully, the following lemma proves that the cusps of two-cylinder origamis corresponding to reduced prototypes have width only $O(n)$.

\begin{lemma}\label{lem:H2-red}
    A primitive two-cylinder $n$-squared origami in $\calH(2)$ with surface parameters $(w_1,h_1,t_1,w_2,h_2,t_2)$ corresponding to a reduced prototype $(0,b,1,e)$ has cusp width equal to $w_{2} = O(n)$. In fact, $(w_1,h_1,w_2,h_2) = (1,\frac{n+e}{2},\frac{n-e}{2},1)$.
\end{lemma}

\begin{proof}
    Consider Figure~\ref{fig:reduced-proto}. The prototype $(0,b,1,e)$ has $\lambda = \frac{e + \sqrt{D}}{2} = \frac{e+n}{2}$. The surface therefore has area $\lambda^{2}+b = \left(\frac{e+n}{2}\right)^{2}+b = \frac{en+n^{2}}{2} = n\lambda$. Recall that the prototype is obtained from the origami of area $n$ by scaling and shearing. Suppose that the origami was vertically scaled by $L$. Then the horizontal scaling factor is $\frac{\lambda}{L}$, and we have
    \[\lambda = Lh_{1},\,\,\lambda = \frac{\lambda}{L}w_{1},\,\,1 = Lh_{2},\,\,b = \frac{\lambda}{L}w_{2},\]
    from which we obtain that $1 = w_{1}h_{2}$. Recall that our surface parameters $w_i$ and $h_i$ are integers and that $\lambda = \frac{e+n}{2}$ is also an integer since $e\equiv n \!\!\mod 2$. Hence, the origami has parameters $w_1 = 1$, $h_1 = \lambda$, $w_{2} = \frac{b}{\lambda} = \frac{n-e}{2}$ and $h_{2} = 1$. Therefore, it lies in a cusp of width
    \[\text{lcm}\left(\frac{w_{1}}{\gcd(w_{1},h_{1})},\frac{w_{2}}{\gcd(w_{2},h_{2})}\right) = \text{lcm}\left(\frac{1}{\gcd(1,\lambda)},\frac{w_{2}}{\gcd(w_{2},1)}\right) = w_{2} = O(n).\]
\end{proof}

\begin{figure}[t]
    \centering
    \begin{tikzpicture}[scale = 0.381, line width = 1.5pt]
        \draw (0,0) -- (4,0) -- (4,4) -- (8,4) -- (8,5) -- (0,5) -- (0,4) -- cycle;
        \node at (2,-1) {$\lambda = \frac{e+n}{2}$};
        \node at (-2,2) {$\lambda = \frac{e+n}{2}$};
        \node[left] at (0,4) {$(0,0)$};
        \node[below right] at (8,4) {$(b,0)$};
        \node[above left] at (0,5) {$(0,1)$};
        \foreach \x/\y in {0/0,4/0,4/4,8/4,8/5,4/5,0/5,0/4}{
            \node at (\x,\y) {$\bullet$};
        }
    \end{tikzpicture}
    \caption{The prototypical splitting corresponding to prototype $(0,b,1,e)$.}
    \label{fig:reduced-proto}
\end{figure}

We briefly remark that, for a reduced prototype, all finite admissible $q$ satisfy $q = O(n)$. Indeed, recall that a finite $q$ is admissible if and only if $(e+2qc)^2 < D$. In this case, since $c = 1$ and $D = n^2$, we need $(e+2q)^2 < n^2$. Solving the equality for the positive solution gives $q = \frac{n-e}{2}$, which is indeed positive since $-n < e < n$. So the largest admissible $q$ is then $\frac{n-e}{2} - 1 = O(n)$. In fact, this shows that the largest admissible $q$ is at most $n-2$.

So, when shadowing butterfly moves between reduced prototypes, the cusp representative with $t_{2} = 0$ can be arranged in $O(n)$ applications of $T$ and, since $\max\{w_2+qt_2,qh_2\} = \max\{w_2,q\} = O(n)$, the Euclidean algorithm part of shadowing the butterfly move only costs $O(n)$ applications of $T$ or $S$. So each butterfly move between reduced prototypes costs $O(n)$ to shadow and we make $O(n)$ such butterfly moves. Hence, reaching the target origami through cusps corresponding to reduced prototypes will cost $O(n^2)$.

Therefore, totalling all of the above steps, we obtain the diameter bound $O(n) + O(n^2\log n) + O(n^2) = O(n^{2}\log n) = O(|V|^{\frac{2}{3}}\log |V|)$, and Theorem~\ref{thm:main-H2} is proved.

\begin{remark}
    We wish to highlight that the most expensive part of the algorithm is moving through the two-cylinder cusps of width $O(n^2)$. At the moment, we just naively apply powers of $T$ to reach the cusp representative. It is entirely possible that there are shortcuts in the graph that allow us to reach the cusp representative by moving outside of the cusp. We expect such an argument to be crucial to any further improvements on diameter bounds. We direct the reader to Appendix~\ref{app:improvements} for further discussion of possible improvements.
\end{remark}

\section{Prym loci in $\calH(4)$}\label{sec:H4}

The Prym locus $\Omega\mathcal{E}_{D}(4)$ is the subset of $\calH(4)$ consisting of those $(M,\omega)$ for which $M$ admits a holomorphic involution $\iota$ with 4 fixed points taking $\omega$ to $-\omega$, and admitting real multiplication by $\mathcal{O}_{D}$ with $\mathcal{O}_{D}\cdot\omega\subset\C\cdot\omega$. Lanneau--Nguyen~\cite{LN14} classify the $\GL^{+}(2,\R)$ connected components of $\Omega\mathcal{E}_{D}(4)$ and we direct the reader to their paper for more details on Prym loci. They prove that, for $D\geq 17$,  $\Omega\mathcal{E}_{D}(4)$ is non-empty if and only if $D\equiv 0,1,4\!\!\mod\! 8$.

Lanneau--Nguyen, as a consequence of their determination of the connected components of $\Omega\mathcal{E}_{D}(4)$, give the following classification of the $\SL(2,\Z)$-orbits of Prym origamis in $\calH(4)$.

\begin{theorem}[{\cite[Proposition B.1 and Corollary B.2]{LN14}}]
    Fix $n\geq 5$ and let $X$ be a primitive $n$-squared origami that is a Prym eigenform in $\calH(4)$. Then
    \begin{itemize}
        \item if $n$ is odd, there is a single $\SL(2,\Z)$-orbit of such origamis and the eigenforms have discriminant $D = n^{2}$.
        \item if $n\equiv 0\mod 4$ or $n = 6$, there is a single $\SL(2,\Z)$-orbit of such origamis and the eigenforms have discriminant $D = n^{2}$.
        \item if $n\equiv 2\mod 4$, $n\geq 10$, there are two $\SL(2,\Z)$-orbits of such origamis: one containing eigenforms of discriminant $D = n^{2}$ and one containing eigenforms of discriminant $D = \frac{n^{2}}{4}$.
    \end{itemize}
\end{theorem}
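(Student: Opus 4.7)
The plan is to deduce the theorem as a specialisation of Lanneau and Nguyen's classification of the $\GL^{+}(2,\R)$-connected components of the Prym locus $\Omega\mathcal{E}_D(4)$ to the integer points of area $n$. The correspondence I would set up is that $\SL(2,\Z)$-orbits of primitive Prym origamis tiled by $n$ squares sit in bijection with connected components of $\bigsqcup_D \Omega\mathcal{E}_D(4)$ which contain such an integer point. The counting problem then splits into (i) enumerating the admissible discriminants $D$ for a given $n$, and (ii) counting which components of each $\Omega\mathcal{E}_D(4)$ are actually hit by square-tiled surfaces.

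For step (i), I would use that a square-tiled surface carries the lattice $\zz+i\zz$-action inherited from the torus cover, so real multiplication by $\mathcal{O}_D$ must be compatible with this integral structure. This forces $D$ to be of the form $n^2/k^2$ for some divisor $k$ of $n$ for which the Prym involution remains adapted. Combining this with the Lanneau-Nguyen constraint $D \equiv 0,1,4 \!\!\mod\! 8$ and the elementary computation of $n^2$ modulo $8$, one finds: $n$ odd forces $D = n^2$ (the only divisor with $D\equiv 1\!\!\mod\! 8$); $n\equiv 0\!\!\mod\! 4$ forces $D = n^2$; and $n\equiv 2\!\!\mod\! 4$ admits both $D = n^2$ and $D = n^2/4$ (the latter satisfying $n^2/4\equiv 1\!\!\mod\! 8$ since $n/2$ is odd). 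A primitivity argument on the underlying origami is used to rule out any strictly smaller intermediate order.

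For step (ii), I would invoke Lanneau-Nguyen's component count: each nonempty $\Omega\mathcal{E}_D(4)$ with $D\geq 17$ is connected except when a spin-type invariant splits it into two components, and I would then identify which components are realised by origamis of the relevant area. For $n$ odd one has $D = n^2\equiv 1\!\!\mod\! 8$ but the spin invariant of the integer point pins the orbit down to a single component, mirroring McMullen's genus-two argument. For $n\equiv 0\!\!\mod\! 4$ or $n = 6$, the Prym locus $\Omega\mathcal{E}_{n^2}(4)$ has a single component and therefore a single orbit of origamis. For $n\equiv 2\!\!\mod\! 4$, $n\geq 10$, each of the two admissible discriminants contributes exactly one orbit; note that $n = 6$ is excluded from this dichotomy precisely because $n^2/4 = 9 < 17$ falls outside the Lanneau-Nguyen range.

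The main obstacle will be the divisibility analysis in step (i): namely, pinning down the discriminant uniquely within each orbit and ruling out intermediate orders by leveraging primitivity. The small boundary cases ($n = 6$, and checking that the second orbit really appears at $n = 10$ where $D = 25$ is just above the threshold) will need to be handled by exhibiting explicit prototypes and applying the Prym-version of the butterfly-move machinery of Lanneau-Nguyen directly to verify that every $n$-squared Prym origami of the prescribed discriminant is $\SL(2,\Z)$-equivalent to one of them.
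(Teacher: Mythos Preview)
The paper does not prove this statement: it is quoted directly from Lanneau--Nguyen \cite[Prop.~B.1 and Cor.~B.2]{LN14} and used as a black box. So there is no in-paper argument to compare against; your proposal is effectively a sketch of how the Lanneau--Nguyen result is deduced from their component classification.

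Your overall strategy---pass through the $\GL^{+}(2,\R)$-component count for $\Omega\mathcal{E}_D(4)$ and check which components contain primitive $n$-squared origamis---is correct, but step~(ii) for $n$ odd misidentifies the mechanism. When $D = n^2 \equiv 1 \pmod 8$, the locus $\Omega\mathcal{E}_D(4)$ genuinely has \emph{two} components, distinguished not by a spin invariant but by the sign $\varepsilon\in\{+,-\}$ in the prototype data $(w,h,t,e,\varepsilon)\in\mathcal{Q}_D$. As the paper records in its discussion of the case $D\equiv 1\pmod 8$, these two components are realised by origamis of \emph{different areas}: one by $\sqrt{D}$-squared surfaces and the other by $2\sqrt{D}$-squared surfaces. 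Thus for odd $n$ only one of the two components contains an $n$-squared origami, and that area obstruction---not a spin computation---is why there is a single orbit. Your analogy with McMullen's genus-two picture runs the wrong way: there, spin \emph{separates} two orbits of the same area; here, the $\varepsilon$-invariant separates two components living at different areas. A smaller issue in step~(i): for odd $n$ every square divisor $n^2/k^2$ is also $\equiv 1\pmod 8$, so the congruence constraint alone does not single out $D = n^2$; primitivity is genuinely doing all the work there, and you should not present the congruence as a filter in that case.
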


In the language of HLK-invariants, when $n$ is odd the origamis have HLK-invariant $(0,[1,1,1])$; when $n\equiv 0\mod 4$ or $n = 6$ the origamis have HLK-invariant $(1,[2,0,0])$; and when $n\equiv 2\mod 4$, $n\geq 10$, then the origamis of discriminant $n^2$ have HLK-invariant $(1,[2,0,0])$ while the origamis of discriminant $\frac{n^{2}}{4}$ have HLK-invariant $(3,[0,0,0])$.

\subsection{Surface parameters}

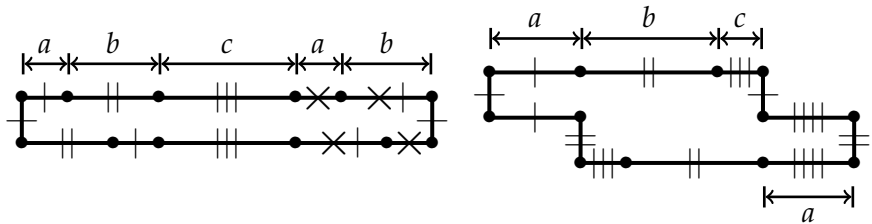
\begin{figure}[b]
    \centering
    \begin{tikzpicture}[scale = 0.6, line width = 1.5pt]
        \draw (0,0) -- node[rotate = 90]{$|$}(0,1) -- node{$|$}(1,1) -- node{$||$}(3,1) -- node{$|||$}(6,1) -- node{$\bigtimes$}(7,1) -- node{$\bigtimes|$}(9,1) -- node[rotate = 90]{$|$}(9,0) -- node{$\bigtimes$}(8,0) -- node{$\bigtimes|$}(6,0) -- node{$|||$}(3,0) -- node{$|$}(2,0) --  node{$||$} cycle;
        \draw[|<->,line width = 1pt] (0,1.75) -- node[above]{$a$}(1,1.75);
        \draw[|<->,line width = 1pt] (1,1.75) -- node[above]{$b$}(3,1.75);
        \draw[|<->,line width = 1pt] (3,1.75) -- node[above]{$c$}(6,1.75);
        \draw[|<->,line width = 1pt] (6,1.75) -- node[above]{$a$}(7,1.75);
        \draw[|<->|,line width = 1pt] (7,1.75) -- node[above]{$b$}(9,1.75);
        \foreach \x/\y in {0/0,0/1,1/1,3/1,6/1,7/1,9/1,9/0,8/0,6/0,3/0,2/0}{
            \node at (\x,\y) {$\bullet$};
        }
        \node at (0,1.75){$\,$};
        \node at (0,-1.75){$\,$};
    \end{tikzpicture}
    \begin{tikzpicture}[scale = 0.6, line width = 1.5pt]
        \draw (0,0) -- node[rotate = 90]{$|$}(0,1) -- node{$|$}(2,1) -- node{$||$}(5,1) -- node{$|||$}(6,1) -- node[rotate = 90]{$|$}(6,0) -- node{$||||$}(8,0) -- node[rotate = 90]{$||$}(8,-1) -- node{$||||$}(6,-1) -- node{$||$}(3,-1) -- node{$|||$}(2,-1) -- node[rotate = 90]{$||$}(2,0) --  node{$|$} cycle;
        \draw[|<->,line width = 1pt] (0,1.75) -- node[above]{$a$}(2,1.75);
        \draw[|<->,line width = 1pt] (2,1.75) -- node[above]{$b$}(5,1.75);
        \draw[|<->|,line width = 1pt] (5,1.75) -- node[above]{$c$}(6,1.75);
        \draw[|<->|,line width = 1pt] (6,-1.75) -- node[below]{$a$}(8,-1.75);
        \foreach \x/\y in {0/0,0/1,2/1,5/1,6/1,6/0,8/0,8/-1,6/-1,3/-1,2/-1,2/0}{
            \node at (\x,\y) {$\bullet$};
        }
    \end{tikzpicture}
    \caption{One-cylinder and two-cylinder cusp representatives in the Prym locus of $\calH(4)$.}
    \label{fig:H4-1-cyl}
\end{figure}

\begin{figure}[t]
    \centering
    \begin{tikzpicture}[scale = 0.7, line width = 1.5pt]
        \draw (0,0) -- node[rotate = 90]{$|$}(2,1) -- node[rotate = 90]{$||$}(3,3) -- node{$||$}(0,3) -- node[rotate = 90]{$|||$}(2,4) -- node{$||$}(5,4) -- node{$|$}(9,4) -- node[rotate = 90]{$|||$}(7,3) -- node[rotate = 90]{$||$}(6,1) -- node{$|||$}(9,1) -- node[rotate = 90]{$|$}(7,0) -- node{$|||$}(4,0) -- node{$|$} cycle;
        \draw[<->|,line width = 1pt] (-0.5,1) -- node[left]{$h_{1}$}(-0.5,3);
        \draw[|<->|,line width = 1pt] (-0.5,0) -- node[left]{$h_{2}$}(-0.5,1);
        \draw[|<->|,line width = 1pt] (3,3.35) -- node[below]{$w_{1}$}(7,3.35);
        \draw[|<->|,line width = 1pt] (0,-0.5) -- node[below]{$w_{2}$}(7,-0.5);
        \draw[|<->,line width = 1pt] (2,0.65) -- node[above]{$t_{1}$}(3,0.65);
        \draw[line width = 1pt] (3.025,0.475) -- (3.025,2.65);
        \draw[<->|,line width = 1pt] (7,-0.5) -- node[below]{$t_{2}$}(9,-0.5);
        \foreach \x/\y in {0/0,2/1,3/3,0/3,2/4,5/4,9/4,7/3,6/1,9/1,7/0,4/0}{
            \node at (\x,\y) {$\bullet$};
        }
    \end{tikzpicture}
    \begin{tikzpicture}[scale = 0.7, line width = 1.5pt]
        \draw (0,0) -- node[rotate = 90]{$||$}(1,2) -- node{$|$}(-1,2) -- node[rotate = 90]{$|$}(1,3) -- node[rotate = 90]{$|||$}(2,5) -- node{$|$}(4,5) -- node[rotate = 90]{$|||$}(3,3) -- node{$||$}(5,3) -- node{$|||$}(8,3) -- node[rotate = 90]{$|$}(6,2) -- node{$|||$}(3,2) -- node[rotate = 90]{$||$}(2,0) -- node{$||$} cycle;
        \draw[|<->|,line width = 1pt] (0.5,3) -- node[left]{$h_{1}$}(0.5,5);
        \draw[|<->|,line width = 1pt] (-1.5,2) -- node[left]{$h_{2}$}(-1.5,3);
        \draw[|<->|,line width = 1pt] (2,5.5) -- node[above]{$w_{1}$}(4,5.5);
        \draw[<->,line width = 1pt] (-1,-0.5) -- node[below]{$w_{2}$}(6,-0.5);
        \draw[|<->,line width = 1pt] (1,5.5) -- node[above]{$t_{1}$}(2,5.5);
        \draw[<->,line width = 1pt] (6,-0.5) -- node[below]{$t_{2}$}(8,-0.5);
        \draw[line width = 1pt] (-1.025,-0.675) -- (-1.025,1.65);
        \draw[line width = 1pt] (6,-0.675) -- (6,1.65);
        \draw[line width = 1pt] (8.025,-0.675) -- (8.025,2.65);
        \foreach \x/\y in {0/0,1/2,-1/2,1/3,2/5,4/5,3/3,5/3,8/3,6/2,3/2,2/0}{
            \node at (\x,\y) {$\bullet$};
        }
    \end{tikzpicture}
    \begin{tikzpicture}[scale = 0.7, line width = 1.5pt]
        \draw (0,0) -- node[rotate = 90]{$||$}(1,2) -- node{$|$}(-1,2) -- node[rotate = 90]{$|$}(1,3) -- node[rotate = 90]{$|||$}(2,5) -- node{$|$}(4,5) -- node{$||$}(5,5) -- node[rotate = 90]{$||$}(4,3) -- node{$|||$}(6,3) -- node[rotate = 90]{$|$}(4,2) -- node[rotate = 90]{$||$}(3,0) -- node{$|||$}(1,0) -- node{$||$} cycle;
        \draw[|<->|,line width = 1pt] (0.5,3) -- node[left]{$h_{1}$}(0.5,5);
        \draw[|<->|,line width = 1pt] (-1.5,2) -- node[left]{$h_{2}$}(-1.5,3);
        \draw[|<->|,line width = 1pt] (2,5.5) -- node[above]{$w_{1}$}(5,5.5);
        \draw[<->,line width = 1pt] (-1,-0.5) -- node[below]{$w_{2}$}(4,-0.5);
        \draw[|<->,line width = 1pt] (1,5.5) -- node[above]{$t_{1}$}(2,5.5);
        \draw[<->,line width = 1pt] (4,-0.5) -- node[below]{$t_{2}$}(6,-0.5);
        \draw[line width = 1pt] (-1.025,-0.675) -- (-1.025,1.65);
        \draw[line width = 1pt] (4,-0.675) -- (4,1.65);
        \draw[line width = 1pt] (6.025,-0.675) -- (6.025,2.65);
        \foreach \x/\y in {0/0,1/2,-1/2,1/3,2/5,4/5,5/5,4/3,6/3,4/2,3/0,1/0}{
            \node at (\x,\y) {$\bullet$};
        }
    \end{tikzpicture}
    \caption{Three-cylinder surface parameters in $\calH(4)$ corresponding to the $A+$, $A-$ and $B$ prototypes.}
    \label{fig:H4-params}
\end{figure}
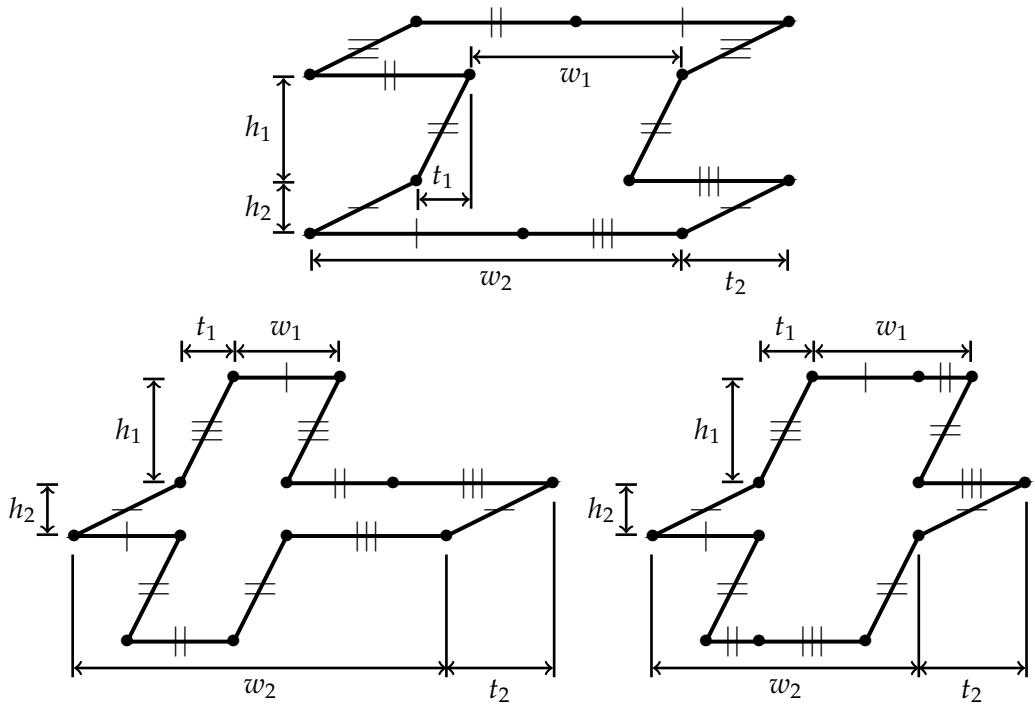

In the Prym locus in $\calH(4)$, an origami can have one, two, or three cylinders. The cusp representatives for one-cylinder and two-cylinder origamis are shown in Figure~\ref{fig:H4-1-cyl}. Here, $(a,b,c)$ is a triple of positive integers with $2a+2b+c = n$ in the one-cylinder case, $2a+2b+2c = n$ in the two-cylinder case, and $\gcd(a,b,c) = 1$ in both cases. It can be seen that two-cylinder origamis do not exist if $n$ is odd, and one-cylinder origamis do not exist for even $n$ with HLK-invariant $(3,[0,0,0])$. One-cylinder cusps have width equal to $n$, while two-cylinder cusps have width equal to $\frac{n}{2}$.

An origami with three cylinders can have one of the three forms shown in Figure~\ref{fig:H4-params}. These correspond, respectively, to the prototypes $A+$, $A-$, and $B$ discussed in the following subsection. The cusp width is given by the same formula used for two-cylinder origamis in $\calH(2)$. Namely, the cusp width is
\[\text{lcm}\left(\frac{w_{1}}{\gcd(w_{1},h_{1})},\frac{w_{2}}{\gcd(w_{2},h_{2})}\right) = O(n^2).\]

Moreover, we see that the orbits will have size $O(n^{3})$.

\subsection{Prototypes and butterfly moves}

Similar to the work of McMullen discussed above, Lanneau--Nguyen define prototypical splittings of discriminant $D$. Here, there are three prototypical splittings: type $A+$, type $A-$, and type $B$. See Figure~\ref{fig:H4-protos}. Prototypes of type $A\pm$ are parameterised by the set
\begin{multline*}
    \mathcal{Q}_{D} := \{(w,h,t,e,\varepsilon)\in\Z^{4}\times\{+,-\}\,:\,w>0,h>0,e+2h<w,\\0\leq t<\gcd(w,h),
    \gcd(w,h,t,e) = 1, D = e^{2}+8wh\}.
\end{multline*}
As before, $\lambda := \frac{e+\sqrt{D}}{2}$.

\begin{figure}[t]
    \centering
    \begin{tikzpicture}[scale = 0.6, line width = 1.5pt]
        \draw (0,0) -- (2,1) -- (2,3) -- (-1,3) -- (1,4) -- (4,4) -- (6,4) -- (4,3) -- (4,1) -- (7,1) -- (5,0) -- (2,0) -- cycle;
        \draw[<->|,line width = 1pt] (-1.5,1) -- node[left]{$\lambda$}(-1.5,3);
        \draw[line width = 1pt] (-1.675,1) -- (1.65,1);
        \draw[|<->|,line width = 1pt] (2,0.65) -- node[above]{$\lambda$}(4,0.65);
        \node[above left] at (-1,3) {$(0,0)$};
        \node[above] at (1,4) {$(t,h)$};
        \node[above right] at (6,4) {$(w+t,h)$};
        \draw[purple] (-1,3) -- (3.5,4);
        \draw[purple] (1.5,3) -- (6,4);
        \foreach \x/\y in {0/0,2/1,2/3,-1/3,1/4,4/4,6/4,4/3,4/1,7/1,5/0,2/0}{
            \node at (\x,\y) {$\bullet$};
        }
    \end{tikzpicture}
    \begin{tikzpicture}[scale = 0.5, line width = 1.5pt]
        \draw (0,0) -- (0,2) -- (-2,2) -- (0,3) -- (0,5) -- (2,5) -- (2,3) -- (4,3) -- (7,3) -- (5,2) -- (2,2) -- (2,0) -- cycle;
        \draw[|<->|,line width = 1pt] (-0.5,3) -- node[left]{$\frac{\lambda}{2}$}(-0.5,5);
        \draw[|<->|,line width = 1pt] (0,5.5) -- node[above]{$\frac{\lambda}{2}$}(2,5.5);
        \node[left] at (-2,2) {$(0,0)$};
        \node[below] at (5,2) {$(w,0)$};
        \node[above right] at (7,3) {$(w+t,h)$};
        \draw[purple] (-2,2) -- (7,3);
        \foreach \x/\y in {0/0,0/2,-2/2,0/3,0/5,2/5,2/3,4/3,7/3,5/2,2/2,2/0}{
            \node at (\x,\y) {$\bullet$};
        }
    \end{tikzpicture}
    \begin{tikzpicture}[scale = 0.5, line width = 1.5pt]
        \draw (0,0) -- (0,3) -- (-2,3) -- (0,4) -- (0,7) -- (2,7) -- (3,7) -- (3,4) -- (5,4) -- (3,3) -- (3,0) -- (1,0) -- cycle;
        \draw[|<->|,line width = 1pt] (-0.5,4) -- node[left]{$\frac{\lambda}{2}$}(-0.5,7);
        \draw[|<->|,line width = 1pt] (0,7.5) -- node[above]{$\frac{\lambda}{2}$}(3,7.5);
        \node[left] at (-2,3) {$(0,0)$};
        \node[below right] at (3,3) {$(w,0)$};
        \node[above right] at (5,4) {$(w+t,h)$};
        \foreach \x/\y in {0/0,0/3,-2/3,0/4,0/7,2/7,3/7,3/4,5/4,3/3,3/0,1/0}{
            \node at (\x,\y) {$\bullet$};
        }
    \end{tikzpicture}
    \caption{Three-cylinder prototypes of type $A+$ (top left), $A-$ (top right) and $B$ (bottom). Directions corresponding to butterfly moves ($q = 2$ and $q = 1$, resp.) are shown in the top prototypes.}
    \label{fig:H4-protos}
\end{figure}
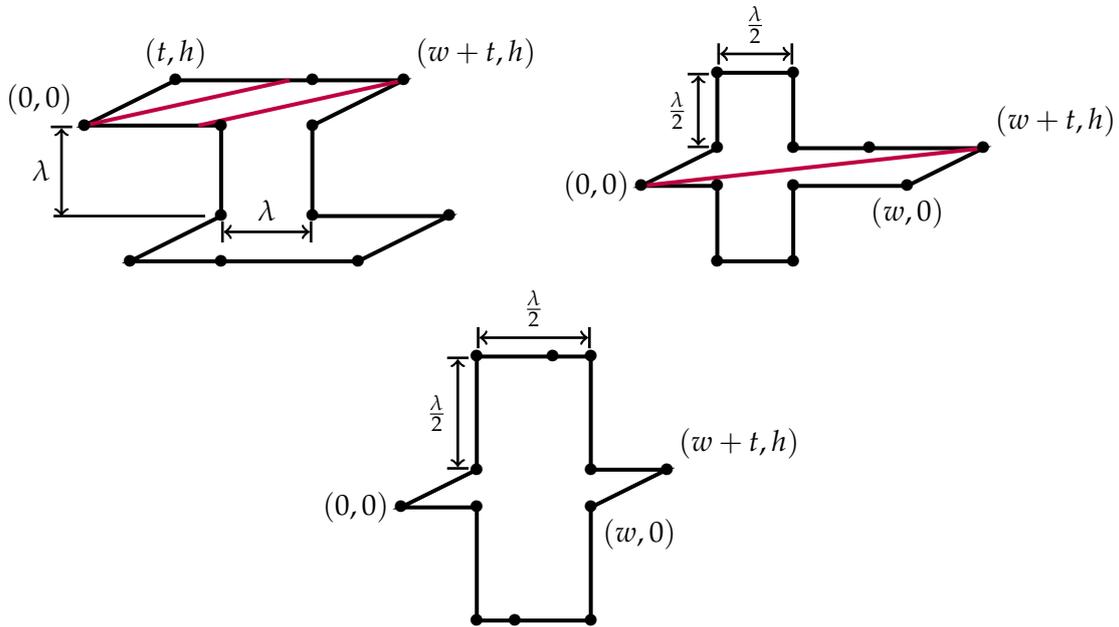

In this setting, butterfly moves exist for such prototypes and send a prototype of type $A+$ to a prototype of type $A-$, and vice versa. A value of $q$ is admissible if $q = \infty$ or $(e + 4qh)^{2} < D$. The quadruples that forget the type ($\pm$) are parameterised by the set
\[\mathcal{P}_{D} := \{(w,h,t,e)\in\Z^{4}\,:\,(w,h,t,e,\pm)\in\mathcal{Q}_{D}\}.\]

In this setting, a reduced prototype is a prototype of the form $(w,1,0,e)$. Such prototypes are parameterised by the set
\[\mathcal{S}_{D} := \{e\in\Z\,:\,e^2\equiv D\!\!\mod\! 8, e^{2},(e+4)^{2} < D\}.\]
As in the previous section, we will at times abuse notation and say that $(w,1,0,e)$ and $e$ are both elements of $\mathcal{S}_{D}$.

Lanneau--Nguyen prove the following result describing the affect of a butterfly move.

\begin{proposition}[{\cite[Propositions 7.5 and 7.6]{LN14}}]
    Let $(w,h,t,e,\pm)\in\mathcal{Q}_{D}$, then, for any admissible $q$, the butterfly move sends $(w,h,t,e,\pm)$ to $(w',h',t',e',\mp)$ with
    \begin{align*}
    e' &= -e - 4qh \\ 
    h' &= \gcd(qh,w+qt)
    \end{align*}
    if $q < \infty$, or
    \begin{align*}
    e' &= -e - 4qh \\ 
    h' &= \gcd(h,t)
    \end{align*}
    if $q = \infty$. In each case, $w'$ is determined by the condition that $D = e^2 + 8wh = (e')^2 + 8w'h'$.
\end{proposition}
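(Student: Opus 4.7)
My plan is to proceed in direct analogy with McMullen's argument, as unpacked in Subsection~\ref{subsec:proto} for $\calH(2)$, adapting the bookkeeping to the Prym splitting $M = E_1 \underset{I}{\#} E_2$ of type $A\pm$. The eigenform relation $\mathcal{O}_D\cdot\omega \subset \C\cdot\omega$ forces the summands to be $\lambda$-proportional square tori glued along a horizontal slit $I$, so I have canonical $\Z$-bases $\Z(\lambda,0)\oplus\Z(0,\lambda)$ for $H_1(E_1,\Z)$ and $\Z(t,h)\oplus\Z(w,0)$ for $H_1(E_2,\Z)$ (reading off Figure~\ref{fig:H4-protos}), together with the area identity $\lambda^2 + wh \cdot (\text{multiplicity factor}) = \text{area}$ that corresponds to the discriminant relation $D = e^2+8wh$ (the $8$ replacing McMullen's $4$ because of the Prym doubling).

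First I would fix an admissible finite $q$ and write down the new splitting $M = F_1\underset{J}{\#}F_2$ along $J = [(w,0)+q(t,h)]$. Following the matrix procedure from Subsection~\ref{subsec:proto}, I would define
\[
L_1 = \begin{pmatrix} w+qt & \lambda+t \\ qh & h \end{pmatrix}, \qquad
L_2 = \begin{pmatrix} \lambda & w+qt \\ 0 & \lambda + qh \end{pmatrix},
\]
compute $N = \lambda' L_1^{-1} L_2$, and invoke $\lambda'^2 = \lambda'\cdot e' + \tfrac{D-e'^2}{4}$ (the characteristic equation for the new order action, adjusted to the Prym normalization) to clear $\lambda'$ from $N$ and obtain an integer matrix. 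I would then apply the Euclidean-algorithm reduction to bring $N$ into the normalized shape $\begin{pmatrix} t' & w' \\ h' & 0 \end{pmatrix}$. Reading off the gcd of the second column of the intermediate matrix yields $h' = \gcd(qh, w+qt)$, and tracking the anti-diagonal entry through the reduction gives the stated formula $e' = -e - 4qh$ (the factor $4$ instead of $2$ being the Prym analog of McMullen's $-e-2qc$).

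Next I would treat the $q=\infty$ case separately: here $J = [(t,h)]$, the bases become
\[
L_1 = \begin{pmatrix} t & \lambda-w \\ h & 0 \end{pmatrix}, \qquad
L_2 = \begin{pmatrix} \lambda & t \\ 0 & \lambda+h \end{pmatrix},
\]
and the same Euclidean reduction applied to $N = \lambda' L_1^{-1} L_2$ produces $h' = \gcd(h,t)$ and the degenerate value $e' = -e-4h$ (formally the $q\to\infty$ limit after the appropriate cancellation). In either case, once $e'$ and $h'$ are known, the identity $D = (e')^2 + 8w'h'$ determines $w'$ uniquely, and $t'$ is recovered $\bmod \gcd(w',h')$ from the $(1,1)$ entry of the reduced matrix. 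Finally, the flip $(\pm) \to (\mp)$ of the $A\pm$ type is inherited from the fact that the butterfly move swaps the two Prym-symmetric square tori with the complementary cylinder, which inverts the orientation of the fixed-point configuration distinguishing the two types.

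The main obstacle I anticipate is verifying that the $\pm$ component label really flips under every admissible butterfly move; the algebraic part of the proof (formulas for $e'$, $h'$, $w'$) is essentially a mechanical adaptation of McMullen's computation, but the topological claim requires a careful inspection of how the Prym involution $\iota$ permutes the four fixed points relative to the new splitting $F_1\underset{J}{\#}F_2$, which is why Lanneau--Nguyen split the statement across two propositions. A secondary nuisance is confirming that $q=1$ remains universally admissible in this setting and that the intermediate matrix $N$ is genuinely integral (i.e., that the factor of $8$ in the discriminant relation really does clear $\lambda'$ rather than leaving a half-integer), which I would check by writing out $\det L_1 = wh+q(wh+th) - qh\cdot(\lambda+t)$ and noting that the $\lambda$-dependent parts cancel against the eigenform normalization.
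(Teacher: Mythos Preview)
Your overall strategy---reduce to a change-of-basis matrix and apply Euclidean reduction to read off $h'$ and $e'$---is the correct one and matches how the paper (and Lanneau--Nguyen) actually proceed. However, your concrete setup with $2\times2$ matrices $L_1,L_2$ is where the analogy breaks down. In the Prym $\calH(4)$ setting the relevant change-of-basis matrix is not a $2\times2$ product $\lambda' L_1^{-1}L_2$ but a $4\times4$ integer matrix $T$ acting on the rank-$4$ Prym lattice; the paper records it explicitly (the two displayed $4\times4$ matrices immediately following the Proposition). The $2\times2$ object one then reduces is the upper-right minor of $T$, namely
\[
\begin{pmatrix} h & -e-t-2qh \\ -qh & w+qt \end{pmatrix}
\quad\text{(finite $q$),}\qquad
\begin{pmatrix} 0 & -e+w-2h \\ -h & t \end{pmatrix}
\quad\text{($q=\infty$).}
\]
Compare the $(1,2)$ entry $-e-t-2qh$ to McMullen's $-a-e-qc$: the extra factor of $2$ on $qh$ is \emph{not} recovered by your $L_1,L_2$ as written. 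If you run your computation with $\lambda^2 = e\lambda + 2wh$ (which is what $D=e^2+8wh$ forces) and $\det L_1 = wh-qh\lambda$, the scaling that made $N$ integral in genus two no longer clears denominators cleanly, precisely because the Prym doubling lives in the full $4\times4$ structure, not in a rescaling of the two rank-$2$ summands. You flagged this as a ``secondary nuisance'' but it is the actual obstruction: your $2\times2$ matrices do not encode the Prym involution, so the reduction produces the wrong $(1,2)$ entry and hence the wrong $e'$.

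The fix is to follow the paper's sketch: write down the full $4\times4$ basis change on the Prym part of $H_1$, check it is integral (this is where the $2$'s in the off-diagonal blocks of $T$ come from), and then observe that the Euclidean reduction to prototype form is governed by the upper-right $2\times2$ minor. From that minor the formulas $h'=\gcd(qh,w+qt)$ and $e'=-e-4qh$ drop out exactly as in Subsection~\ref{subsec:butterfly-bound}. Your discussion of the $\pm\to\mp$ flip is reasonable but also incomplete as stated; this is the content of Lanneau--Nguyen's second proposition and requires tracking how the butterfly move exchanges the roles of the two simple cylinders relative to the fixed cylinder, which is a separate topological argument not visible in the $2\times2$ algebra.
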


Similar to McMullen's proof in genus two, this is proved by considering the basis changes for homology associated to the matrix
\[\begin{pmatrix}
    -2qh & 0 & h & -e-t-2qh \\
    0 & -2qh & -qh & w+qt \\
    2w+2qt & 2e+2t+4qh & e+2qh & 0 \\
    2qh & 2h & 0 & e+2qh
\end{pmatrix}\]
for $q < \infty$, or 
\[\begin{pmatrix}
    -2h & 0 & 0 & -e+w-2h \\
    0 & -2h & -h & t \\
    2t & 2e-2w+4h & e+2h & 0 \\
    2qh & 0 & 0 & e+2h
\end{pmatrix}\]
for $q = \infty$. The key minors in each case are the upper right minors
\[\begin{pmatrix}
    h & -e-t-2qh \\
    -qh & w+qt
\end{pmatrix}\]
and 
\[\begin{pmatrix}
    0 & -e+w-2h \\
    -h & t
\end{pmatrix}\]
which resemble those considered in genus two.

Consider the origami $((1,2,3,4,5,6)(8,9,10,11,12,13),(6,8,7))$ shown in Figure~\ref{fig:prym4-sts-example-1}. It can be checked that this origami corresponds to the prototype $(6,1,0,-11,+)\in\mathcal{Q}_{169}$. In particular, $q = 2$ is admissible. The butterfly move direction $(w_{2}+qt_{2},qh_{2}) = (6,2)$ can be seen in Figure~\ref{fig:prym4-sts-example-1}. To shadow the butterfly move, since we are already at the cusp representative, we need only perform $S^{-1}\circ T^{-2}$ to make the butterfly move direction horizontal. The cusp representative of the resulting origami is $$((1,2,3,4,5)(6,7)(8,9)(10,11)(12,13),(1,6,8,2,7,9)(3,13,11,4,12,10)),$$ shown in Figure~\ref{fig:prym4-sts-example-2}, and it can be checked that it does indeed correspond to the prototype $B_{2}(6,1,0,-11,+) = (10,2,1,3,-)$.

As in the genus two case above, we see that shadowing a butterfly move costs $O(n^{2})$ to move via powers of $T$ to the cusp representative followed by a Euclidean algorithm operation costing $O(\max\{w_{2}+qt_{2},qh_{2}\})$, for finite $q$, or $O(\max\{t_{2},h_{2}\}) = O(h_{2})$ for $q = \infty$.

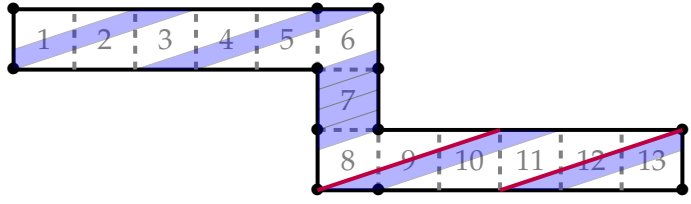
\begin{figure}[h]
    \centering
    \begin{tikzpicture}[scale = 0.8, line width = 1.5pt]
        \draw (0,0) -- (5,0) -- (5,-2) -- (11,-2) -- (11,-1) -- (6,-1) -- (6,1) -- (0,1) -- cycle;
        \draw[gray,dashed] (5,0) -- (6,0);
        \draw[gray,dashed] (5,-1) -- (6,-1);
        \foreach \i in {1,2,3,4,5}{
            \draw[gray,dashed] (\i,0) -- (\i,1);
            \node[gray] at (\i-0.5,0.5) {$\i$};
        }
        \foreach \i in {9,10,11,12,13}{
            \draw[gray,dashed] (\i-3,-2) -- (\i-3,-1);
            \node[gray] at (\i-2.5,-1.5) {$\i$};
        }
        \foreach \i in {6,7,8}{
            \node[gray] at (5.5,6.5-\i) {$\i$};
        }
        \foreach \x/\y in {0/0,0/1,5/1,6/1,6/0,6/-1,11/-1,11/-2,6/-2,5/-2,5/-1,5/0}{
            \node at (\x,\y) {$\bullet$};
        }
        \draw[line width = 0pt, fill = blue, opacity = 0.3] (5,-2) -- (6,-2) -- (9,-1) -- (8,-1) -- (5,-2);
        \draw[line width = 0pt, fill = blue, opacity = 0.3] (8,-2) -- (9,-2) -- (11,-1-1/3) -- (11,-1) -- (8,-2);
        \draw[line width = 0pt, fill = blue, opacity = 0.3] (5,-1-1/3) -- (6,-1) -- (6,-1+1/3) -- (5,-1) -- (5,-1-1/3);
        \draw[line width = 0pt, fill = blue, opacity = 0.3] (6,1) -- (5,1) -- (2,0) -- (3,0) -- (6,1);
        \draw[line width = 0pt, fill = blue, opacity = 0.3] (3,1) -- (2,1) -- (0,1/3) -- (0,0) -- (3,1);
        \draw[line width = 0pt, fill = blue, opacity = 0.3] (6,1/3) -- (5,0) -- (5,-1/3) -- (6,0) -- (6,1/3);
        \draw[line width = 0pt, fill = blue, opacity = 0.3] (6,1/3-1/3) -- (5,0-1/3) -- (5,-1/3-1/3) -- (6,0-1/3) -- (6,1/3-1/3);
        \draw[line width = 0pt, fill = blue, opacity = 0.3] (6,1/3-2/3) -- (5,0-2/3) -- (5,-1/3-2/3) -- (6,0-2/3) -- (6,1/3-2/3);
        \draw[color = purple] (5,-2) -- (8,-1);
        \draw[color = purple] (8,-2) -- (11,-1);
    \end{tikzpicture}
    \caption{An origami corresponding to the prototype $(6,1,0,-11,+)\in\mathcal{Q}_{169}$. The direction corresponding to the butterfly move $B_{2}$ and the resulting simple cylinder are shown in purple and blue, respectively.}
    \label{fig:prym4-sts-example-1}
\end{figure}

\begin{figure}[h]
    \centering
    \begin{tikzpicture}[scale = 0.8, line width = 1.5pt]
        \draw (0,0) -- (0,1) -- (1,3) -- (3,3) -- (2,1) -- (5,1) -- (5,0) -- (4,0) -- (3,-2) -- (1,-2) -- (2,0) -- cycle;
        \foreach \i in {1,2,3,4}{
            \draw[gray,dashed] (\i,0) -- (\i,1);
            \node[gray] at (\i-0.5,0.5) {$\i$};
        }
        \node[gray] at (4.5,0.5) {$5$};
        \node[gray] at (0.5,1.5) {$6$};
        \node[gray] at (1.5,1.5) {$7$};
        \node[gray] at (2.5,2.5) {$8$};
        \node[gray] at (1.5,2.5) {$9$};
        \node[gray] at (2.5,-0.5) {$10$};
        \node[gray] at (3.5,-0.5) {$11$};
        \node[gray] at (1.5,-1.5) {$13$};
        \node[gray] at (2.5,-1.5) {$12$};
        \foreach \i in {0,1}{
            \draw[gray,dashed] (0+0.5*\i,1+\i) -- (2+0.5*\i,1+\i);
            \draw[gray,dashed] (1+\i,1) -- (1+\i,3);
        }
        \foreach \i in {0,-1}{
            \draw[gray,dashed] (2+0.5*\i,0+\i) -- (4+0.5*\i,0+\i);
            \draw[gray,dashed] (3+\i,0) -- (3+\i,-2);
        }
        \foreach \x/\y in {0/0,0/1,1/3,3/3,2/1,4/1,5/1,5/0,4/0,3/-2,1/-2,2/0}{
            \node at (\x,\y) {$\bullet$};
        }
    \end{tikzpicture}
    \caption{An origami corresponding to the prototype $(10,2,1,3,-)\in\mathcal{Q}_{169}$.}
    \label{fig:prym4-sts-example-2}
\end{figure}

\subsection{The classification proof and implied algorithm}

The classification proof of Lanneau--Nguyen can be sketched as follows:
\begin{enumerate}
    \item Prove that every translation surface $X$ in $\Omega\mathcal{E}_{D}(4)$ has a prototypical splitting of type $A\pm$ in its $\GL^{+}(2,\R)$-orbit. The associated prototype $(w,h,t,e,\varepsilon)$ lies in $\mathcal{Q}_{D}$.
    \item Prove that the prototype $(w,h,t,e)\in\mathcal{P}_{D}$ can be sent to a reduced prototype $(w',1,0,e')$ in $\mathcal{S}_{D}$ by a sequence of butterfly moves.
    \item Prove that two reduced prototypes in $\mathcal{S}_{D}$ are connected by a sequence of butterfly moves lying inside $\mathcal{S}_{D}$ (when $\mathcal{S}_{D}$ is connected) or through $\mathcal{P}_{D}$ when $\mathcal{S}_{D}$ has two components. In any case, the reduced prototypes are all connected to one another by some sequence of butterfly moves.
    \item Finally, when $D\not\equiv 1\!\!\mod 8$, find a sequence of butterfly moves of odd length that fixes a specific prototype in $\mathcal{S}_{D}$. Since each butterfly move changes $\varepsilon$ for a prototype in $\mathcal{Q}_{D}$, this allows any choice of $\varepsilon$ to be achieved. This gives one orbit for $D\not\equiv 1\!\!\mod 8$, and two orbits for $D\equiv 1\!\!\mod 8$ where $(w,h,t,e,\varepsilon)$ and $(w,h,t,e,\varepsilon')$ lie in different orbits if $\varepsilon\neq \varepsilon'$.
\end{enumerate}

For each $D\not\equiv 1\!\!\mod 8$, we choose the target prototype to be the reduced prototype $(w,h,t,e)$ in $\mathcal{S}_{D}$ that admits the odd length sequence of butterfly moves fixing itself (see Subsection~\ref{subsec:H4-result-bound} for what these prototypes are). Let the target origamis be the cusp representatives of the cusps associated to the prototypes $(w,h,t,e,+)$ in $\mathcal{Q}_{D}$.

For $D\equiv 1 \!\!\mod 8$, we set the target prototypes to be $(\frac{D-1}{8},1,0,-1,\pm)$ (one lies in each orbit) and set the target origamis to be the cusp representatives of the associated cusps.

This implies the following algorithm for moving inside an $\SL(2,\Z)$-orbit:
\begin{enumerate}
    \item Move inside the $\SL(2,\Z)$-orbit to a three-cylinder surface that corresponds to a prototype $(w,h,t,e,\varepsilon)$ of type $A\pm$.
    \item Shadow the sequence of butterfly moves that sends $(w,h,t,e)$ to a reduced prototype $(w',1,0,e')$.
    \item Shadow the sequence of butterfly moves that sends this reduced prototype to the target reduced prototype.
    \item If required, shadow the sequence of butterfly moves of odd length to change the value of $\varepsilon$, before moving in the resulting cusp to the target origami.
\end{enumerate}

\subsection{Bounding the number of butterfly moves}

The argument is similar to that given in Subsection~\ref{subsec:butterfly-bound}.

\subsubsection{Connecting to reduced prototypes}

Similar to $\calH(2)$, we will show that a prototype $(w,h,t,e)\in\mathcal{P}_{D}$ can be sent to a reduced prototype with $O(\log n)$ applications of $B_{1}$. Indeed, the key minor we are reducing is the matrix
\[\begin{pmatrix}
    h & -e-t-2h \\
    -h & w+t
\end{pmatrix}.\]
The arguments of Subsection~\ref{subsec:butterfly-bound} work exactly the same to show that at most three applications of $B_{1}$ are required to produce a prototype $(w',h',t',e')$ with $h'\leq h/2$. We have $\log h = O(\log n)$ and we are done.

\subsubsection{Connecting reduced prototypes to the target prototype}

Similar to the case in $\calH(2)$, the reduced prototypes in $\mathcal{P}_{D}$ are parameterised by the set
\[\mathcal{S}_{D} := \{e\in\Z\,:\,e^{2}\equiv D\!\!\mod 8,\,\text{and}\,\, e^{2}, (e+4)^2 < D\}.\]
So, again, we have $|\mathcal{S}_{D}| = O(\sqrt{D}) = O(n)$.

In this setting, Lanneau and Nguyen prove the following.

\begin{theorem}[Follows from {\cite[Theorems 8.2 and 8.6]{LN14}}]\label{thm:H4-SD}
    Once $D \equiv 0,1,4\!\!\mod 8$ is large enough, the set $\mathcal{S}_{D}$ is non-empty and either
    \begin{itemize}
        \item $D\equiv 4\!\!\mod 16$ and $\mathcal{S}_{D}$ has two connected components $\{e\equiv 2\mod 8\}$ and $\{e \equiv -2\mod 8\}$; or
        \item $\mathcal{S}_{D}$ is connected.
    \end{itemize}
    However, again once $D$ is large enough, $\mathcal{P}_{D}$ is connected.
\end{theorem}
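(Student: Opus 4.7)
The plan is to obtain this statement as a direct consequence of Lanneau--Nguyen's \cite[Theorems 8.2 and 8.6]{LN14} via the dictionary between prototypes and prototypical splittings recalled in the previous subsection. Under this dictionary, the prototype graph on $\mathcal{P}_{D}$ and its restriction to $\mathcal{S}_{D}$ are the combinatorial shadows of the $\SL(2,\R)$-action on cusps of $\Omega\mathcal{E}_{D}(4)$, and their connected components correspond bijectively to those of the corresponding loci. Non-emptiness of $\mathcal{S}_{D}$ for $D\equiv 0,1,4\!\!\mod 8$ large is then elementary: the congruence $e^{2}\equiv D\!\!\mod 8$ admits solutions in each of the three admissible residue classes ($e\equiv 0\!\!\mod 4$ when $D\equiv 0$, $e$ odd when $D\equiv 1$, and $e\equiv 2\!\!\mod 4$ when $D\equiv 4$), and the window $\max\{e^{2},(e+4)^{2}\} < D$ contains such an $e$ once $D$ is large.

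For the dichotomy in $\mathcal{S}_{D}$, I would analyse the restriction of butterfly moves to reduced prototypes. A move $B_{q}$ applied to $(w,1,0,e)$ produces $h' = \gcd(q,w)$ and $e' = -e-4q$ (with $e' = -e-4$ when $q = \infty$), so it lands again in $\mathcal{S}_{D}$ precisely when $\gcd(q,w)=1$. When $D\equiv 4\!\!\mod 16$, the identity $D = e^{2}+8w$ combined with $e\equiv 2\!\!\mod 4$ forces $w$ to be even, hence $q$ to be odd, and a short case check shows that $e'\equiv -e-4\!\!\mod 8$; consequently the two residue classes $\{e\equiv 2\!\!\mod 8\}$ and $\{e\equiv -2\!\!\mod 8\}$ are each preserved under butterfly moves within $\mathcal{S}_{D}$ but never mixed. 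In every other case with $D\equiv 0,1,4\!\!\mod 8$, the constraint on the parity of $w$ is absent, so $q$ can take either parity, and a case analysis using $B_{1}$, $B_{\infty}$, and small admissible $B_{q}$ connects all of $\mathcal{S}_{D}$.

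Finally, the connectedness of $\mathcal{P}_{D}$ when $D\equiv 4\!\!\mod 16$ requires bridging the two $\mathcal{S}_{D}$-components through a non-reduced prototype. The strategy is to apply $B_{q}$ with $\gcd(q,w)>1$ to reach an intermediate of height $h'>1$; from such an intermediate, the reduction procedure of the previous subsection can be steered, by different choices of subsequent butterfly moves, into either component of $\mathcal{S}_{D}$. The main obstacle is the arithmetic of guaranteeing that such a bridging $q$ exists for every large $D\equiv 4\!\!\mod 16$ while remaining admissible (i.e. $(e+4qh)^{2}<D$), which is precisely the content of \cite[Theorem 8.6]{LN14}; I would import that statement rather than reprove it from scratch.
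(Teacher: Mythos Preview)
The paper does not actually prove this theorem: it is stated as a direct citation of \cite[Theorems~8.2 and~8.6]{LN14}, and the text following it records the explicit bridging paths Lanneau--Nguyen construct in $\mathcal{P}_{D}$ for $D\equiv 4\pmod{16}$, because those paths are needed downstream for the diameter count. Your proposal goes further than the paper by sketching \emph{why} the dichotomy holds, and your analysis of the $D\equiv 4\pmod{16}$ case is correct: from $D=e^{2}+8w$ with $e\equiv 2\pmod 4$ one gets $w$ even, so any $q$ with $\gcd(q,w)=1$ is odd, and then $e'=-e-4q\equiv -e-4\pmod 8$ fixes each residue class $\{e\equiv\pm 2\pmod 8\}$ separately. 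That is exactly the obstruction Lanneau--Nguyen identify.

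Where your sketch is thin is on the \emph{internal} connectivity of each component of $\mathcal{S}_{D}$ (and of $\mathcal{S}_{D}$ as a whole in the non-exceptional cases): ``a case analysis using $B_{1}$, $B_{\infty}$, and small admissible $B_{q}$'' is the right shape of argument, but this is the part of \cite{LN14} that requires genuine work, and you would need to say more to make it self-contained. Since you explicitly plan to import \cite[Theorem~8.6]{LN14} for the bridging in $\mathcal{P}_{D}$, and the paper itself treats the whole statement as a black box, your approach is consistent with the paper's. One practical point: for the later diameter estimates the paper needs the \emph{specific} short bridging paths (three butterfly moves each) listed after the theorem, not just abstract connectivity; your strategy of ``steering the reduction procedure'' would work logically but would not by itself yield the $O(1)$-length paths the diameter argument relies on.
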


If we are in the case where $\mathcal{S}_{D}$ is connected, then we can reach the target reduced prototype from any other reduced prototype in $O(n)$ butterfly moves. In the case that $\mathcal{S}_{D}$ has two components, Lanneau--Nguyen prove that the two components can be connected as follows:
\begin{itemize}
    \item if $D=4+16k$, $k$ odd, then one can connect the two components via the following path in $\mathcal{P}_{D}$:
    \[(2k - 4,1,0,-6) \xrightarrow[]{B_{2}} (k,2,0,-2) \xrightarrow[]{B_{\infty}} (k - 2,2,0,-6) \xrightarrow[]{B_{1}} (2k,1,0,-2).\]
    \item if $D=4+32k$, $k$ odd, then one can connect the two components via the following path in $\mathcal{P}_{D}$:
    \[(4k,1,0,2) \xrightarrow[]{B_{2}} (2k-6,2,1,-10) \xrightarrow[]{B_{2}} (2k - 2,2,1,-6) \xrightarrow[]{B_{1}} (4k,1,0,-2).\]
    \item if $D=4+32k$, $k$ even, then one can connect the two components via the following path in $\mathcal{P}_{D}$:
    \[(4k - 4,1,0,-6) \xrightarrow[]{B_{4}} (k-3,4,0,-10) \xrightarrow[]{B_{\infty}} (k - 1,4,0,-6) \xrightarrow[]{B_{1}} (4k-12,1,0,-10).\]
\end{itemize}

\subsection{The resulting diameter bound}\label{subsec:H4-result-bound}

We must now bound the number of applications of $T$ or $S$ required to connect an arbitrary origami $X$ to the target origami in its orbit.

Firstly, note that the cusp representatives of one-cylinder and two-cylinder origamis have a three-cylinder decomposition of type $A\pm$ in the vertical direction. So, when starting from such an origami, we can arrive at an origami corresponding to a prototype of type $A\pm$ in $O(n)$ applications of $T$ or $S$. Indeed, the cusps here have width $O(n)$ and rotating the origami by $\pi/2$ is $O(1)$.

If we have a three-cylinder origami corresponding to a prototype of type $B$, then the proof of~\cite[Proposition 4.7]{LN14} gives us that the cusp representative of this origami has a three-cylinder decomposition of type $A\pm$ in one of the following directions:
\begin{itemize}
    \item $(t_1+t_2-w_2,h_1+h_2)$;
    \item $(2t_1+t_2,2h_1+h_2)$;
    \item $(w_1+t_1+t_2,h_1+h_2)$; or
    \item $(2t_1+t_2+y-w_1-w_2,2h_1+h_2)$, for some integer $1\leq y < w_{1}$.
\end{itemize}
In each case, this direction can be made horizontal in $O(n)$ applications of $T$ or $S$, after reaching the cusp representative in $O(n^{2})$ applications of $T$. Again, we move to the cusp representative by naively applying powers of $T$.

So, any origami can be taken to a three-cylinder origami corresponding to a prototype of type $A\pm$ in $O(n^{2})$ applications of $T$ or $S$.

We now make use of the following result of Lanneau--Nguyen and its proof.

\begin{theorem}[{\cite[Theorem 9.2]{LN14}}]\label{thm:LN-9.1}
    Let $D > 16$ be an even discriminant with $D \equiv 0, 4 \!\!\mod\! 8$. If $D \not\in \{48, 68, 100\}$, then $\mathcal{Q}_{D}$ has only one component.
\end{theorem}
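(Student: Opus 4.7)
My plan is to establish the theorem in two stages: first I would show that $\mathcal{P}_D$ (the set obtained by forgetting the $\pm$ label) is connected under butterfly moves, and then I would upgrade this to connectivity of $\mathcal{Q}_D$ by dealing with the sign identification. The two stages are logically distinct because butterfly moves always flip the type $A+\leftrightarrow A-$: the forgetful map $\mathcal{Q}_D\to\mathcal{P}_D$ is a bipartite two-to-one covering, and so connectedness of $\mathcal{Q}_D$ is strictly stronger than connectedness of $\mathcal{P}_D$.

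For the first stage, I would proceed exactly as in Subsection 4.3 of the paper. Any prototype $(w,h,t,e)\in\mathcal{P}_D$ is joined to a reduced prototype in $\mathcal{S}_D$ by $O(\log n)$ applications of $B_1$, using the dichotomy analysis on $\gcd(w+t,h)$ already carried out in that subsection. Theorem 3.5 then classifies the components of $\mathcal{S}_D$, and when it splits into two pieces (the case $D\equiv 4 \mod 16$) the three explicit three-step bridges tabulated in Subsection 4.3.2 — one for each of $D=4+16k$ with $k$ odd, $D=4+32k$ with $k$ odd, and $D=4+32k$ with $k$ even — join the two pieces inside $\mathcal{P}_D$. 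For this stage the essential verification is that the intermediate quadruples along each bridging path satisfy the admissibility bound $(e+4qh)^2<D$ together with the positivity and coprimality conditions defining $\mathcal{Q}_D$, and this is where the exceptional discriminants $\{48,68,100\}$ are expected to appear naturally: one of the three bridging paths fails admissibility for precisely these small values of $D$.

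For the second stage, connectedness of $\mathcal{Q}_D$ is equivalent to the butterfly graph on $\mathcal{P}_D$ containing a closed walk of odd length, because the bipartite covering $\mathcal{Q}_D\to\mathcal{P}_D$ is trivial if and only if $\mathcal{P}_D$ is bipartite under butterfly moves. I would produce an odd closed walk by exhibiting two butterfly paths between the same pair of reduced prototypes whose lengths have opposite parity. The three-step bridges of Subsection 4.3.2 supply an odd-length path between their endpoints, and travelling between the same endpoints within a connected component of $\mathcal{S}_D$ by iterated $B_1$'s provides an alternate path; a parity check on iterated $B_1$'s, which shift $e$ in increments determined by admissibility, produces a path of the opposite parity, and concatenation yields the required odd cycle. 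In the cases where $\mathcal{S}_D$ is itself connected, the odd cycle can be found inside $\mathcal{S}_D$ by the same comparison.

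The main obstacle is the second stage: one must ensure that the companion path inside $\mathcal{S}_D$ not only exists but has parity opposite to that of the bridging three-step path, uniformly across all residues of $D$ modulo $32$ in the allowed range, and that all intermediate prototypes along both paths remain admissible. A secondary obstacle is that the finitely many small discriminants $D\leq 16$ (and the listed exceptions $\{48,68,100\}$) must be treated by direct computation; one must verify that the odd-cycle construction genuinely collapses for $D\in\{48,68,100\}$, confirming that these exceptions are sharp rather than artefacts of the argument.
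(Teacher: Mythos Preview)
Your two-stage framework is correct, and the paper (following \cite{LN14}) uses exactly this decomposition: first connect $\mathcal{P}_D$ via reduced prototypes and bridging paths, then produce an odd-length butterfly cycle to collapse the $\pm$ covering. However, your second stage contains a genuine gap.

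You propose to build the odd cycle by comparing the three-step bridges of Subsection~4.3.2 against an alternate path inside $\mathcal{S}_D$ between the same endpoints. But those bridges exist precisely in the case $D\equiv 4\bmod 16$, where $\mathcal{S}_D$ has \emph{two} components and the endpoints of each bridge lie in \emph{different} components --- so there is no alternate path within $\mathcal{S}_D$ to compare parities against. And in the cases where $\mathcal{S}_D$ is connected you offer no concrete construction at all, only the assertion that ``the same comparison'' works.

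The actual argument in \cite{LN14}, as summarised in the paper immediately after Theorem~\ref{thm:LN-9.1}, is much more direct: one exhibits an \emph{explicit} odd-length closed walk at a single reduced prototype. For $D\equiv 4\bmod 8$ this is the length-one loop $B_1\bigl(\tfrac{D-4}{8},1,0,-2\bigr)=\bigl(\tfrac{D-4}{8},1,0,-2\bigr)$. For $D\equiv 0\bmod 8$ one writes down explicit paths of length one or three (depending on $D\bmod 32$) that send a specific reduced prototype back to itself; these are tabulated in the paper. Once such a self-loop exists, connectedness of $\mathcal{P}_D$ finishes the proof. This avoids any delicate parity bookkeeping and makes the exceptional set $\{48,68,100\}$ visible as the small discriminants for which the explicit loop fails admissibility.
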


Since we are interested in asymptotics, we can ignore the exceptional discriminants here. We handle different residue classes separately.

\subsubsection{$D\equiv 1 \!\!\mod\! 8$} In this case, Lanneau--Nguyen prove that $\mathcal{Q}_{D}$ has two components. Indeed, for each $(w,h,t,e)\in\mathcal{P}_{D}$, they prove that $(w,h,t,e,+)$ and $(w,h,t,e,-)$ lie in different $\GL^{+}(2,\R)$-orbits~\cite[Theorem 6.1]{LN14}.

For example, it can be shown that if $\sqrt{D} \equiv 1\!\!\mod 4$ then the prototype $(\frac{D-1}{8},1,0,-1,-)$ corresponds to an origami with $\sqrt{D}$ squares (i.e., $n$ is odd). Whereas, $(\frac{D-1}{8},1,0,-1,+)$ corresponds to an origami with $2\sqrt{D}$ squares (i.e., $n \equiv 2 \!\!\mod 4$). Conversely, when $\sqrt{D} \equiv 3\!\!\mod 4$, $(\frac{D-1}{8},1,0,-1,+)$ corresponds to an origami with $\sqrt{D}$ squares while $(\frac{D-1}{8},1,0,-1,-)$ corresponds to an origami with $2\sqrt{D}$ squares. 

In this setting, take any $(w,h,t,e,\varepsilon)\in\mathcal{Q}_{D}$ and apply the $O(\log n)$ applications of $B_{1}$ to reach a prototype of the form $(w',1,0,e',\varepsilon')$ with $e'\in \mathcal{S}_{D}$. Each application of $B_1$ requires $O(n^{2})$ applications of $T$ or $S$ to shadow, so this process costs $O(n^{2}\log n)$.

Now, by Theorem~\ref{thm:H4-SD}, $\mathcal{S}_{D}$ is connected and so we can apply $O(n)$ butterfly moves to connect the reduced prototype to the target reduced prototype. Similar to Lemma~\ref{lem:H2-red}, we have the following.

\begin{lemma}\label{lem:H4-red}
    An origami corresponding to a reduced prototype $(w,1,0,e,\varepsilon)\in\mathcal{Q}_{D}$ has $h_{2}\in\{1,2\}$ and cusp width $k\in\{\frac{w_{2}}{2},w_{2},2w_{2}\}$. So, $k=O(n)$.
\end{lemma}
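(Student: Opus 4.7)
The plan is to mimic the argument of Lemma~\ref{lem:H2-red}, with additional care taken to account for the Prym involution. Let $X$ be an origami corresponding to a reduced prototype $(w,1,0,e,\epsilon)\in\mathcal{Q}_{D}$, so that $D = e^{2} + 8w$ and $\lambda := (e+\sqrt{D})/2$ satisfies $\lambda^{2} = e\lambda + 2w$.

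First, I would inspect the $A\pm$ prototype pictured in Figure~\ref{fig:H4-protos} and compute its total area. Using the identity $(n-e)\lambda = (n^{2}-e^{2})/2 = 4w$ (where $n = \sqrt{D}$), one checks that the prototype area equals $\lambda^{2} + 2w = n\lambda$, exactly paralleling the $\calH(2)$ case of Lemma~\ref{lem:H2-red}. Since the origami has area equal to its number of squares, this pins down the product of the vertical and horizontal scalings producing the prototype from the origami.

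Second, I would match each cylinder of the origami (as labelled in Figure~\ref{fig:H4-params}) with the corresponding piece of the prototype. The $\lambda\times\lambda$ ``torus piece'' of the prototype is the image of the cylinder with parameters $(w_{1},h_{1})$, while the two $w\times 1$ rectangles, swapped by the Prym involution, are produced by the cylinders with parameters $(w_{2},h_{2})$. Suppose the origami is scaled vertically by $\alpha$ and horizontally by $\beta$; then $\alpha h_{1} = \lambda$, $\beta w_{1} = \lambda$, $\alpha h_{2} = 1$, $\beta w_{2} = w$, together with the area constraint $\alpha\beta = \lambda$. Integrality of $w_{1},h_{1},w_{2},h_{2}$ together with the condition $\gcd(w,1,0,e)=1$ built into the definition of $\mathcal{Q}_{D}$ forces $\alpha\in\{1,\tfrac{1}{2}\}$, the value $\alpha=\tfrac{1}{2}$ being permitted only in the cases of $D\bmod 8$ and $\epsilon\in\{+,-\}$ in which the Prym involution allows an extra factor-of-$2$ rescaling of the small rectangles relative to the torus piece. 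In all cases one concludes $h_{2}\in\{1,2\}$, $w_{1}\in\{1,2\}$, and $w_{2}\leq n$ (it counts at most all unit squares of a single cylinder).

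Finally, plugging these values into the cusp-width formula
\[k \;=\; \operatorname{lcm}\!\left(\frac{w_{1}}{\gcd(w_{1},h_{1})},\,\frac{w_{2}}{\gcd(w_{2},h_{2})}\right),\]
one runs through the (finite) case analysis on the parities of $w_{1}$, $h_{1}=\lambda/\alpha$, $w_{2}$ and $h_{2}$ to obtain $k\in\{w_{2}/2,\,w_{2},\,2w_{2}\}$, so $k=O(n)$ as required. The main technical obstacle is precisely this last case analysis: one must carefully track how the factors of $2$ in $w_{1}$ and $h_{2}$ interact with the parity of $w_{2}$ and of $\lambda$, and it is this interaction, absent in the $\calH(2)$ setting, that is responsible for the extra factors of $2$ appearing in the cusp width (in contrast to Lemma~\ref{lem:H2-red}).
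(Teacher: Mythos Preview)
Your overall strategy matches the paper's: relate prototype dimensions to the origami's cylinder parameters via a scaling argument, then plug into the cusp-width formula. But there are two genuine gaps in the execution.

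First, your area computation and cylinder matching are only correct for $\varepsilon=+$. For $\varepsilon=-$ the prototype (Figure~\ref{fig:H4-protos}, top right) consists of \emph{two} $\tfrac{\lambda}{2}\times\tfrac{\lambda}{2}$ squares, swapped by the Prym involution, together with a \emph{single} $w\times 1$ rectangle; its area is $2(\lambda/2)^{2}+w$, not $\lambda^{2}+2w$, and the scaling equations become $\alpha h_{1}=\lambda/2$, $\beta w_{1}=\lambda/2$ rather than $\alpha h_{1}=\beta w_{1}=\lambda$. Your description of which pieces the involution swaps is thus reversed in the $A-$ case.

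Second, you identify $n$ with $\sqrt{D}$, but by the Lanneau--Nguyen classification the discriminant can be either $n^{2}$ or $n^{2}/4$ (the latter when $n\equiv 2\pmod 4$). In the $D=n^{2}/4$ cases the area constraint becomes $\alpha\beta=\lambda/2$ (respectively $\lambda/4$ for $\varepsilon=-$), not $\alpha\beta=\lambda$, and this is exactly what produces $w_{1}h_{2}=2$ rather than $w_{1}h_{2}=1$. Note that your own equations $\alpha h_{1}=\beta w_{1}=\alpha\beta=\lambda$ already force $w_{1}=\alpha\in\Z_{>0}$, hence $\alpha\ge 1$ and $h_{2}=1$; the proposed escape $\alpha=\tfrac12$ is inconsistent with them, so the factor of $2$ cannot arise the way you suggest. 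The paper handles all of this by a clean four-way case split on $\varepsilon\in\{+,-\}$ and $D\in\{n^{2},n^{2}/4\}$, reading off $w_{1}h_{2}\in\{1,2\}$ directly in each case (and invoking primitivity $\gcd(h_{1},h_{2})=1$ once to discard a spurious solution). You should do the same.
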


\begin{proof}
    Suppose that $D = n^{2}$ and $\varepsilon = +$. The prototype has area $\lambda^{2} + 2w = n\lambda$. Suppose that in achieving the prototype the origami of area $n$ is scaled by $L$ vertically and $\frac{\lambda}{L}$ horizontally. We then have
    \[\lambda = Lh_{1},\,\,\lambda = \frac{\lambda}{L}w_{1},\,\,1 = Lh_{2},\,\,w = \frac{\lambda}{L}w_{2},\]
    from which we obtain that $1 = w_{1}h_{2}$. Hence the origami has surface parameters $w_{1} = 1$, $h_{1} = \lambda$, $h_{2} = 1$, $w_{2} = \frac{w}{\lambda}$ and cusp width
    \[k = \text{lcm}\left(\frac{w_{1}}{\gcd(w_{1},h_{1})},\frac{w_{2}}{\gcd(w_{2},h_{2})}\right) = \text{lcm}\left(\frac{1}{\gcd(1,\lambda)},\frac{w_{2}}{\gcd(w_{2},1)}\right) = w_{2}.\]

    When $D = \frac{n^{2}}{4}$ and $\varepsilon = +$, the prototype has area $\lambda^{2} + 2w = n\frac{\lambda}{2}$. As above, suppose that in achieving the prototype the origami of area $n$ is scaled by $L$ vertically and $\frac{\lambda}{2L}$ horizontally. We then have
    \[\lambda = Lh_{1},\,\,\lambda = \frac{\lambda}{2L}w_{1},\,\,1 = Lh_{2},\,\,w = \frac{\lambda}{2L}w_{2},\]
    from which we obtain that $2 = w_{1}h_{2}$. Hence,
    \[(w_{1},h_{1},w_{2},h_{2})\in\left\{\left(2,\lambda,\frac{2w}{\lambda},1\right),\left(1,2\lambda,\frac{w}{\lambda},2\right)\right\}.\]
    However, since $\gcd(h_{1},h_{2}) = 1$ for primitive origamis, we must have $$(w_{1},h_{1},w_{2},h_{2}) = \left(2,\lambda,\frac{2w}{\lambda},1\right)$$ and so the origami has cusp width
    \[k = \text{lcm}\left(\frac{w_{1}}{\gcd(w_{1},h_{1})},\frac{w_{2}}{\gcd(w_{2},h_{2})}\right) = \text{lcm}\left(\frac{2}{\gcd(2,\lambda)},\frac{w_{2}}{\gcd(w_{2},1)}\right) \in\{w_{2},2w_{2}\}.\]
    
    When $D = n^{2}$ and $\varepsilon = -$, the prototype has area $2(\frac{\lambda}{2})^{2}+w = n\frac{\lambda}{2}$. So, if the vertical scaling is $L$ and horizontal scaling is $\frac{\lambda}{2L}$, then
    \[\frac{\lambda}{2} = Lh_{1},\,\,\frac{\lambda}{2} = \frac{\lambda}{2L}w_{1},\,\,1 = Lh_{2},\,\,w = \frac{\lambda}{2L}w_{2},\]
    giving $1 = w_{1}h_{2}$, again. Hence, $w_{1} = 1$, $h_{1} = \frac{\lambda}{2}$, $w_{2} = \frac{2w}{\lambda}$, $h_{2} = 1$, and the origami has cusp width
    \[k = \text{lcm}\left(\frac{w_{1}}{\gcd(w_{1},h_{1})},\frac{w_{2}}{\gcd(w_{2},h_{2})}\right) = \text{lcm}\left(\frac{1}{\gcd(1,\frac{\lambda}{2})},\frac{w_{2}}{\gcd(w_{2},1)}\right) = w_{2}.\]

    Finally, when $D = \frac{n^{2}}{4}$ and $\varepsilon = -$, the prototype has area $2(\frac{\lambda}{2})^{2}+w = n\frac{\lambda}{4}$. So, if the vertical scaling is $L$ and horizontal scaling is $\frac{\lambda}{4L}$, then
    \[\frac{\lambda}{2} = Lh_{1},\,\,\frac{\lambda}{2} = \frac{\lambda}{4L}w_{1},\,\,1 = Lh_{2},\,\,w = \frac{\lambda}{4L}w_{2},\]
    giving $2 = w_{1}h_{2}$, again. Hence,
    \[(w_{1},h_{1},w_{2},h_{2})\in\left\{\left(1,\lambda,\frac{2w}{\lambda},2\right),\left(2,\frac{\lambda}{2},\frac{4w}{\lambda},1\right)\right\}.\] So, the cusp width is
    \[k = \text{lcm}\left(\frac{w_{1}}{\gcd(w_{1},h_{1})},\frac{w_{2}}{\gcd(w_{2},h_{2})}\right) \in\left\{\frac{w_{2}}{2},w_{2},2w_{2}\right\}.\]
\end{proof}

Therefore, these $O(n)$ butterfly moves between reduced prototypes each require $O(n)$ applications of $T$ or $S$ to shadow.

So we obtain a diameter bound of $O(n^{2}\log n)$.

\subsubsection{$D\equiv 0\!\!\mod\! 8$} By Theorem~\ref{thm:LN-9.1}, we know that $\mathcal{Q}_{D}$ is connected. The proof of connectivity argues that there exists an $e\in\mathcal{S}_{D}$ that is sent to itself by a sequence of butterfly moves of odd length. This sends $(w,h,t,e,+)$ to $(w,h,t,e,-)$ and the connectivity of $\mathcal{S}_{D}$ from Theorem~\ref{thm:H4-SD} completes the proof.

The required paths of odd length are:
\begin{itemize}
    \item $B_{2}(2k-1,1,0,-4) = (2k-1,1,0,-4)$ if $D = 8 + 16k$, $k\geq 1$;
    \item if $D = 32k$, then
    \[(4k - 2,1,0,-4) \xrightarrow{B_2} (2k - 1,2,0,-4) \xrightarrow{B_\infty} (2k - 1,2,0,-4) \xrightarrow{B_1} (4k - 2,1,0,-4);\]
    \item if $D = 16+32k$, $k > 1$, then
    \[
( 4 k - 6 , 1 , 0 , - 8 )\xrightarrow{B_2}( 2 k + 1 , 2 , 0 , 0 ) \xrightarrow{B_\infty} ( 2 k - 3 , 2 , 0 , - 8 ) \xrightarrow{B_2} ( 4 k - 6 , 1 , 0 , - 8 ).\]
\end{itemize}

As above, we use $O(\log n)$ applications of $B_{1}$ to send $(w,h,t,e,\varepsilon)$ to $(w',1,0,e',\varepsilon')$. This requires $O(n^{2}\log n)$ applications of $T$ or $S$ to shadow. Now, move to the target prototype in $\mathcal{S}_{D}$, which will be one of the starting prototypes in the list of paths above. This prototype can be reached with $O(n)$ butterfly moves each costing $O(n)$ to shadow --- so a cost of $O(n^2)$. If we need to change $\varepsilon$ in order to reach the target origami, we shadow one of the odd length sequences above. Each sequence requires at most $O(n^{2})$ applications of $T$ or $S$. Indeed, the cusp width is at worst $O(n^2)$ for each prototype and, since $q$ is bounded or equal to $\infty$ in each case, the Euclidean algorithm process costs only $O(n)$. We have now arrived at the target origami after possibly $O(n)$ applications of $T$ to move inside the cusp.

Hence, we again obtain a diameter bound of $O(n^{2}\log n)$.

\subsection{$D\equiv 4 \!\!\mod \! 8$} The proof of Theorem~\ref{thm:LN-9.1} deals with this case using the path $$B_{1}\left(\frac{D-4}{8},1,0,-2\right) = \left(\frac{D-4}{8},1,0,-2\right).$$ That is, this is the odd length path required to swap $\varepsilon$. This requires $O(n)$ applications of $T$ or $S$ to shadow --- indeed, it is a single application of $B_1$ on a reduced prototype.

If $D\not\equiv 4\!\!\mod \!16$, then $\mathcal{S}_{D}$ is connected and a similar argument to those above gives a diameter bound of $O(n^{2}\log n)$.

Otherwise, $\mathcal{S}_{D}$ is disconnected. In this case, to reach the above target prototype, we may be required to use one of the paths given below Theorem~\ref{thm:H4-SD} above in order to change component. Each such path costs $O(n^{2})$ to shadow --- indeed, each sequence is a bounded length sequence of butterfly moves with bounded or infinite $q$ between prototypes corresponding to cusps of width at most $O(n^2)$. All other costing is the same as before. Hence, we once again obtain a diameter bound of $O(n^{2}\log n)$.

This completes the proof of Theorem~\ref{thm:main-H4-H6} for the case of $\calH(4)$.

\section{Prym loci in $\calH(6)$}\label{sec:H6}

Similar to the previous section, the Prym locus $\Omega\mathcal{E}_{D}(6)$ is the subset of $\calH(6)$ consisting of those $(M,\omega)$ for which $M$ admits a holomorphic involution $\iota$ with 2 fixed points taking $\omega$ to $-\omega$, and admitting real multiplication by $\mathcal{O}_{D}$ with $\mathcal{O}_{D}\cdot\omega\subset\C\cdot\omega$. Lanneau--Nguyen~\cite{LN20} classify the $\GL^{+}(2,\R)$ connected components of $\Omega\mathcal{E}_{D}(6)$ and we again direct the reader to their paper for more details on Prym loci. They prove that, for $D\equiv 0,1\!\!\mod\! 4$ with $D\neq 4,9$,  $\Omega\mathcal{E}_{D}(6)$ is non-empty and connected. Moreoever, they show that $\Omega\mathcal{E}_{4}(6)$ and $\Omega\mathcal{E}_{9}(6)$ are both empty.

The relevant result for origamis is the following.

\begin{theorem}[{\cite[Theorem 1.2]{LN20}}]
    Fix an even $n\geq 8$ and let $X$ be a primitive origami that is a Prym eigenform in $\calH(6)$. Then there is a single $\SL(2,\Z)$-orbit of such origamis and the eigenforms have discriminant $D = \frac{n^{2}}{4}$.

    If $n$ is odd, then there are no such origamis.
\end{theorem}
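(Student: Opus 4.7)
The plan is to reduce the classification to Lanneau--Nguyen's connectedness result for $\Omega\mathcal{E}_{D}(6)$ combined with an area/discriminant computation determining which values of $D$ arise from primitive $n$-square origamis. Since origamis are the integer points of the ambient moduli space, the $\SL(2,\Z)$-orbit structure of Prym origamis in $\calH(6)$ is inherited from the $\GL^{+}(2,\R)$-orbit structure of $\Omega\mathcal{E}_{D}(6)$, provided we can pin down the discriminant from $n$.

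First, I would set up the prototype formalism for Prym eigenforms in $\calH(6)$ analogous to that of Section \ref{sec:H4}, parameterising splittings of $(M,\omega)$ via a tuple $(w,h,t,e)$ together with a sign $\varepsilon$ (or whatever additional invariants the $\calH(6)$ geometry requires) and a relation of the form $D = e^{2}+c\, wh$ for an explicit constant $c$. An area computation, entirely parallel to the one performed in Lemma \ref{lem:H4-red}, then shows that if such a prototype is realised by an origami with $n$ unit squares, the total area (expressible as a rational multiple of $\lambda = (e+\sqrt{D})/2$) must equal the integer $n$; matching integer scaling constraints forces $\sqrt{D} = n/2$, hence $D = n^{2}/4$.

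From this discriminant identity I would immediately deduce the non-existence statement for odd $n$: the eigenform discriminant $D$ must be a positive integer congruent to $0$ or $1$ modulo $4$, but $D = n^{2}/4$ is not an integer when $n$ is odd. The range restriction $n \geq 8$ matches the exclusion of the exceptional $D = 4, 9$, which are known to give empty Prym loci.

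The main step, and the principal obstacle, is the connectivity assertion yielding a single $\SL(2,\Z)$-orbit: I would invoke \cite[Theorem 1.2]{LN20}, whose proof itself runs along the lines of Section \ref{sec:H4}, namely by showing that every prototype in $\mathcal{P}_{D}$ for $\calH(6)$ reduces in $O(\log n)$ applications of $B_{1}$ to a reduced prototype, and that all reduced prototypes of fixed admissible $D$ can be linked by a finite chain of butterfly moves. The difficulty here, compared with the $\calH(4)$ case, is that the homological basis-change matrix governing a butterfly move in $\calH(6)$ is larger and its admissibility constraints on $q$ are more delicate; verifying that a spin-type invariant does not produce an additional component requires a case-analysis using explicit fixed paths of butterfly moves of the kind listed after Theorem \ref{thm:H4-SD}. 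Once connectivity of $\mathcal{Q}_{D}$ for $D = n^{2}/4$ is in hand, combining it with the unique-discriminant constraint from the area computation above yields a single $\SL(2,\Z)$-orbit of primitive $n$-square Prym origamis for each even $n \geq 8$.
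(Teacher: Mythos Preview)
The paper does not give its own proof of this statement: it is quoted verbatim as \cite[Theorem 1.2]{LN20} and used as input for the diameter bounds in Section~\ref{sec:H6}. So there is no proof in the paper to compare against; your proposal is really a sketch of how the Lanneau--Nguyen argument itself should go.

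That said, your sketch has a genuine circularity. In the connectivity step you write that you ``would invoke \cite[Theorem 1.2]{LN20}'', but that \emph{is} the theorem you are proving. What you should invoke instead is the earlier result of Lanneau--Nguyen (stated just above the theorem in the paper) that $\Omega\mathcal{E}_{D}(6)$ is connected for every admissible $D\neq 4,9$; the passage from connectedness of the Prym locus to a single $\SL(2,\Z)$-orbit of origamis then requires the standard fact that an arithmetic Teichm\"uller curve carries a single $\SL(2,\Z)$-orbit of integer points (equivalently, the Veech group has finite index). You allude to this in your first paragraph but then bypass it by citing the theorem itself.

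Your discriminant computation is the right shape but is stated too loosely. You assert that matching the area to $n$ ``forces $\sqrt{D}=n/2$''. In the $\calH(6)$ normalisation the prototype area is a half-integer multiple of $\lambda$ (see the proof of Lemma~\ref{lem:H6-red}, where the area is $n\lambda/4$), and one must rule out the alternative scalings that could in principle give $D=n^{2}$ (as happens in $\calH(4)$, cf.\ the case split in Lemma~\ref{lem:H4-red}); this uses the specific geometry of the four-cylinder models in Figure~\ref{fig:H6-params}. Once $D=n^{2}/4$ is established rigorously, your parity argument for odd $n$ is fine.
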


Here, the origamis have HLK-invariant $(1,[0,0,0])$.

\subsection{Surface parameters}

In the Prym locus in $\calH(6)$, an origami can have two, or four cylinders (since there can only be one fixed point that is not the zero of the differential and a fixed cylinder gives rise to two fixed points). The possible cusp representatives for two-cylinder origamis are shown in Figure~\ref{fig:H6-2-cyl}. Here, $(a,b,c)$ is a triple of positive integers with $2a+4b+2c = n$ in the first case, $2a+4b+4c = n$ in the second, and $\gcd(a,b,c) = 1$ in both cases. Two-cylinder cusps have width equal to $\frac{n}{2}$.

Origamis with four cylinders have one of the two forms shown in Figure~\ref{fig:H6-params}. As above, the cusp width of such an origami is
\[\text{lcm}\left(\frac{w_{1}}{\gcd(w_{1},h_{1})},\frac{w_{2}}{\gcd(w_{2},h_{2})}\right) = O(n^2).\]

Moreover, we see that the orbits will have size $O(n^{3})$.

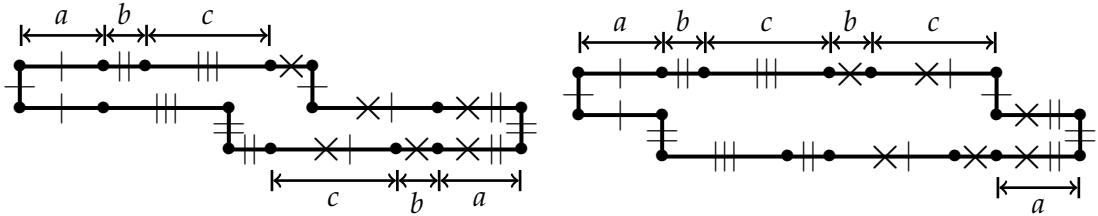
\begin{figure}[b]
    \centering
    \begin{tikzpicture}[scale = 0.55, line width = 1.5pt]
        \draw (0,0) -- node[rotate = 90]{$|$}(0,1) -- node{$|$}(2,1) -- node{$||$}(3,1) -- node{$|||$}(6,1) -- node{$\bigtimes$}(7,1) -- node[rotate = 90]{$|$}(7,0) -- node{$\bigtimes|$}(10,0) -- node{$\bigtimes||$}(12,0) -- node[rotate = 90]{$||$}(12,-1) -- node{$\bigtimes||$}(10,-1) -- node{$\bigtimes$}(9,-1) -- node{$\bigtimes|$}(6,-1) -- node{$||$}(5,-1) -- node[rotate = 90]{$||$}(5,0) -- node{$|||$}(2,0) -- node{$|$} cycle;
        \draw[|<->,line width = 1pt] (0,1.75) -- node[above]{$a$}(2,1.75);
        \draw[|<->,line width = 1pt] (2,1.75) -- node[above]{$b$}(3,1.75);
        \draw[|<->|,line width = 1pt] (3,1.75) -- node[above]{$c$}(6,1.75);
        \draw[|<->,line width = 1pt] (6,-1.75) -- node[below]{$c$}(9,-1.75);
        \draw[|<->,line width = 1pt] (9,-1.75) -- node[below]{$b$}(10,-1.75);
        \draw[|<->|,line width = 1pt] (10,-1.75) -- node[below]{$a$}(12,-1.75);
        \foreach \x/\y in {0/0,0/1,2/1,3/1,6/1,7/1,7/0,10/0,12/0,12/-1,10/-1,9/-1,6/-1,5/-1,5/0,2/0}{
            \node at (\x,\y) {$\bullet$};
        }
    \end{tikzpicture}
    \begin{tikzpicture}[scale = 0.55, line width = 1.5pt]
        \draw (0,0) -- node[rotate = 90]{$|$}(0,1) -- node{$|$}(2,1) -- node{$||$}(3,1) -- node{$|||$}(6,1) -- node{$\bigtimes$}(7,1) -- node{$\bigtimes|$}(10,1) -- node[rotate = 90]{$|$}(10,0) -- node{$\bigtimes||$}(12,0) -- node[rotate = 90]{$||$}(12,-1) -- node{$\bigtimes||$}(10,-1) -- node{$\bigtimes$}(9,-1) -- node{$\bigtimes|$}(6,-1) -- node{$||$}(5,-1) -- node{$|||$}(2,-1) -- node[rotate = 90]{$||$}(2,0) -- node{$|$} cycle;
        \draw[|<->,line width = 1pt] (0,1.75) -- node[above]{$a$}(2,1.75);
        \draw[|<->,line width = 1pt] (2,1.75) -- node[above]{$b$}(3,1.75);
        \draw[|<->,line width = 1pt] (3,1.75) -- node[above]{$c$}(6,1.75);
        \draw[|<->,line width = 1pt] (6,1.75) -- node[above]{$b$}(7,1.75);
        \draw[|<->|,line width = 1pt] (7,1.75) -- node[above]{$c$}(10,1.75);
        \draw[|<->|,line width = 1pt] (10,-1.75) -- node[below]{$a$}(12,-1.75);
        \foreach \x/\y in {0/0,0/1,2/1,3/1,6/1,7/1,10/1,10/0,12/0,12/-1,10/-1,9/-1,6/-1,5/-1,2/-1,2/0}{
            \node at (\x,\y) {$\bullet$};
        }
        \node at (0,1.75){$\,$};
        \node at (0,-1.75){$\,$};
    \end{tikzpicture}
    \caption{Two-cylinder cusp representatives in the Prym locus of $\calH(6)$.}
    \label{fig:H6-2-cyl}
\end{figure}

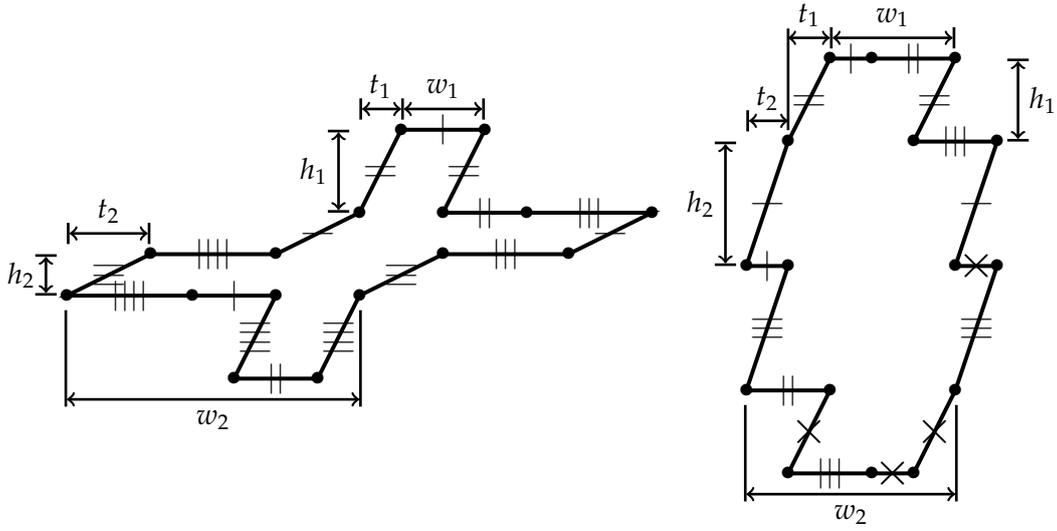
\begin{figure}[t]
    \centering
    \begin{tikzpicture}[scale = 0.55, line width = 1.5pt]
        \draw (0,0) -- node[rotate = 90]{$|$}(2,1) -- node[rotate = 90]{$||$}(3,3) -- node{$|$}(5,3) -- node[rotate = 90]{$||$}(4,1) -- node{$||$}(6,1) -- node{$|||$}(9,1) -- node[rotate = 90]{$|$}(7,0) -- node{$|||$}(4,0) -- node[rotate = 90]{$|||$}(2,-1) -- node[rotate = 90]{$||||$}(1,-3) -- node{$||$}(-1,-3) -- node[rotate = 90]{$||||$}(0,-1) -- node{$|$}(-2,-1) -- node{$||||$}(-5,-1) -- node[rotate = 90]{$|||$}(-3,0) -- node{$||||$} cycle;
        \draw[|<->|,line width = 1pt] (1.5,1) -- node[left]{$h_{1}$}(1.5,3);
        \draw[|<->|,line width = 1pt] (-5.5,-1) -- node[left]{$h_{2}$}(-5.5,0);
        \draw[|<->|,line width = 1pt] (3,3.5) -- node[above]{$w_{1}$}(5,3.5);
        \draw[<->,line width = 1pt] (-5,-3.5) -- node[below]{$w_{2}$}(2,-3.5);
        \draw[|<->,line width = 1pt] (2,3.5) -- node[above]{$t_{1}$}(3,3.5);
        \draw[|<->|,line width = 1pt] (-5,0.5) -- node[above]{$t_{2}$}(-3,0.5);
        \draw[line width = 1pt] (-5.025,-3.675) -- (-5.025,-1.35);
        \draw[line width = 1pt] (2.025,-3.675) -- (2.025,-1.35);
        \foreach \x/\y in {0/0,2/1,3/3,5/3,4/1,6/1,9/1,7/0,4/0,2/-1,1/-3,-1/-3,0/-1,-2/-1,-5/-1,-3/0}{
            \node at (\x,\y) {$\bullet$};
        }
        \node at (2,6.5) {$\,$};
        \node at (2,-6.5) {$\,$};
    \end{tikzpicture}
    \begin{tikzpicture}[scale = 0.55, line width = 1.5pt]
        \draw (0,0) -- node[rotate = 90]{$|$}(1,3) -- node[rotate = 90]{$||$}(2,5) -- node{$|$}(3,5) -- node{$||$}(5,5) -- node[rotate = 90]{$||$}(4,3) -- node{$|||$}(6,3) -- node[rotate = 90]{$|$}(5,0) -- node{$\bigtimes$}(6,0) -- node[rotate = 90]{$|||$}(5,-3) -- node[rotate = 90]{$\bigtimes$}(4,-5) -- node{$\bigtimes$}(3,-5) -- node{$|||$}(1,-5) -- node[rotate = 90]{$\bigtimes$}(2,-3) -- node{$||$}(0,-3) -- node[rotate = 90]{$|||$}(1,0) -- node{$|$} cycle;
        \draw[|<->|,line width = 1pt] (6.5,3) -- node[right]{$h_{1}$}(6.5,5);
        \draw[|<->|,line width = 1pt] (-0.5,0) -- node[left]{$h_{2}$}(-0.5,3);
        \draw[|<->|,line width = 1pt] (2,5.5) -- node[above]{$w_{1}$}(5,5.5);
        \draw[<->,line width = 1pt] (0,-5.5) -- node[below]{$w_{2}$}(5,-5.5);
        \draw[<->,line width = 1pt] (1,5.5) -- node[above]{$t_{1}$}(2,5.5);
        \draw[|<->,line width = 1pt] (0,3.5) -- node[above]{$t_{2}$}(1,3.5);
        \draw[line width = 1pt] (-0.025,-5.675) -- (-0.025,-3.35);
        \draw[line width = 1pt] (5.025,-5.675) -- (5.025,-3.35);
        \draw[line width = 1pt] (1,3.5-0.225) -- (1,5.725);
        \foreach \x/\y in {0/0,1/3,2/5,3/5,5/5,4/3,6/3,5/0,6/0,5/-3,4/-5,3/-5,1/-5,2/-3,0/-3,1/0}{
            \node at (\x,\y) {$\bullet$};
        }
    \end{tikzpicture}
    \caption{Four-cylinder surface parameters in $\calH(6)$ corresponding to prototypes of type $A$ and $B$, respectively.}
    \label{fig:H6-params}
\end{figure}

\subsection{Prototypes and butterfly moves}

\begin{figure}
    \centering
    \begin{tikzpicture}[scale = 0.45, line width = 1.5pt]
        \draw (0,0) -- (2,1) -- (2,3) -- (4,3) -- (4,1) -- (6,1) -- (10.25,1) -- (8.25,0) -- (4,0) -- (2,-1) -- (2,-3) -- (0,-3) -- (0,-1) -- (-2,-1) -- (-6.25,-1) -- (-4.25,0) -- cycle;
        \draw[|<->|,line width = 1pt] (1.25,1) -- node[left]{$\frac{\lambda}{2}$}(1.25,3);
        \draw[|<->|,line width = 1pt] (2,3.75) -- node[above]{$\frac{\lambda}{2}$}(4,3.75);
        \node[left] at (-6.25,-1) {$(0,0)$};
        \node[below right] at (2,-0.75) {$(\frac{w}{2},0)$};
        \node[above] at (-4.25,0) {$(\frac{t}{2},\frac{h}{2})$};
        \foreach \x/\y in {0/0,2/1,2/3,4/3,4/1,6/1,10.25/1,8.25/0,4/0,2/-1,2/-3,0/-3,0/-1,-2/-1,-6.25/-1,-4.25/0}{
            \node at (\x,\y) {$\bullet$};
        }
        \draw[purple] (0,0) -- (10.25,1);
        \node at (2,-5.25) {$\,$};
        \node at (2,6) {$\,$};
    \end{tikzpicture}
    \begin{tikzpicture}[scale = 0.45, line width = 1.5pt]
        \draw (0,0) -- (1,2) -- (1,5) -- (2,5) -- (4,5) -- (4,2) -- (6,2) -- (5,0) -- (6,0) -- (5,-2) -- (5,-5) -- (4,-5) -- (2,-5) -- (2,-2) -- (0,-2) -- (1,0) -- cycle;
        \node[below left] at (0,-2) {$(0,0)$};
        \node[right] at (5,-2) {$(\frac{w}{2},0)$};
        \node[right] at (6,0) {$(\frac{w}{2}+\frac{t}{2},\frac{h}{2})$};
        \draw[|<->|,line width = 1pt] (0.25,2) -- node[left]{$\frac{\lambda}{2}$}(0.25,5);
        \draw[|<->|,line width = 1pt] (1,5.75) -- node[above]{$\frac{\lambda}{2}$}(4,5.75);
        \foreach \x/\y in {0/0,1/2,1/5,2/5,4/5,4/2,6/2,5/0,6/0,5/-2,5/-5,4/-5,2/-5,2/-2,0/-2,1/0}{
            \node at (\x,\y) {$\bullet$};
        }
    \end{tikzpicture}
    \caption{Four cylinder prototypes of type $A$ and $B$, respectively. The direction corresponding to the butterfly move with $q = 1$ is shown.}
    \label{fig:H6-protos}
\end{figure}
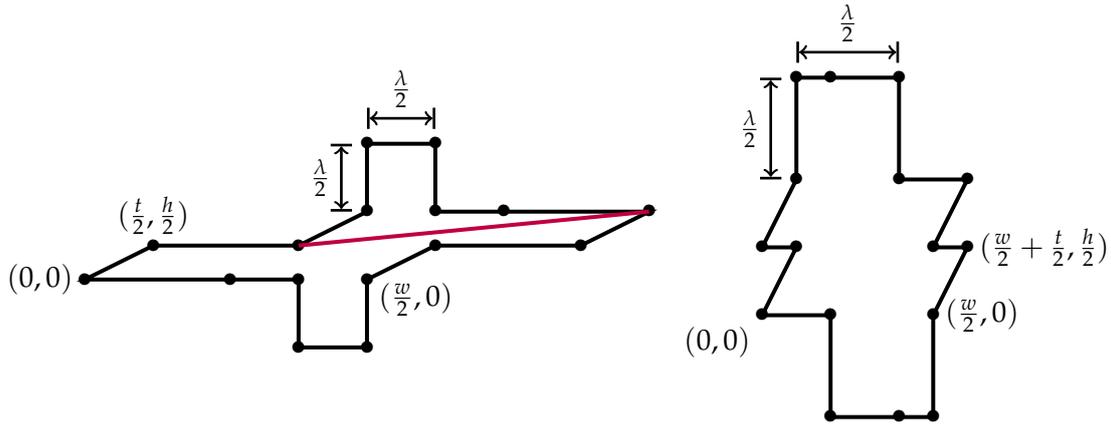

Similar to the previous two sections, Lanneau--Nguyen define prototypes of discriminant $D$ in this setting. They define prototypes of type $A$ and type $B$ as shown in Figure~\ref{fig:H6-protos}. Both prototypes are parameterised by the set
\begin{multline*}
    \mathcal{P}_{D} := \left\{(w,h,t,e)\in\Z^{4}\,:\,w>0,h>0,0\leq t<\gcd(w,h)\right.,
    \\ \left.\gcd(w,h,t,e) = 1, D = e^{2}+4wh, 0< \lambda := \frac{e+\sqrt{D}}{2} < w, \lambda \neq \frac{w}{2}\right\}
\end{multline*}
with those of type $A$ being specifically parameterised by the set
\[\mathcal{P}^{A}_{D} := \left\{(w,h,t,e)\in\mathcal{P}_{D}\,:\,\lambda < \frac{w}{2}\right\}\]
and those of type $B$ parameterised by
\[\mathcal{P}^{B}_{D} := \left\{(w,h,t,e)\in\mathcal{P}_{D}\,:\,\frac{w}{2}< \lambda < w\right\}.\]

In this setting we still have reduced prototypes $(w,1,0,e)$ parameterised by
\[\mathcal{S}^{1}_{D} := \{e\in\Z\,:\,e^2\equiv D\!\!\mod\! 8, e^{2},(e+4)^{2} < D\}.\]
For $D\equiv 1\!\!\mod\!8$, we also have \textit{almost-reduced prototypes} of the form $(w,2,0,e)$, with $w$ even, parameterised by the set
\[\mathcal{S}^{2}_{D} := \{e\in\Z\,:\,e^2\equiv D\!\!\mod\! 16, e^{2},(e+8)^{2} < D\}.\]

Butterfly moves can be defined on the prototypes of type $A$ and map such prototypes among themselves. Here, a value of $q$ is admissible if $q = \infty$, or if $(e+4qh)^{2} < D$. We have the following result.

\begin{proposition}[{\cite[Propositions 2.6 and 2.7]{LN20}}]
    Let $(w,h,t,e)\in\mathcal{P}^{A}_{D}$ and let $q$ be admissible. We have 
    $B_{q}(w,h,t,e) = (w',h',t',e')$ with
    \begin{align*}
        e' &= -e - 4qh \\
        h' &= \gcd(w+qt,qh)
    \end{align*}
    for finite $q$, and
    \begin{align*}
        e' &= -e - 4h \\
        h' &= \gcd(t,h)
    \end{align*}
    for $q = \infty$. In each case, $w'$ is determined by $D = e^2 + 4wh = (e')^{2}+4w'h'$.
\end{proposition}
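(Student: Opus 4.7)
The plan is to mimic the homology change-of-basis argument used for $\calH(4)$ in Section~\ref{sec:H4} (which itself goes back to McMullen's treatment of $\calH(2)$ recalled in Subsection~\ref{subsec:proto}). First, I would fix the prototypical splitting of type $A$ associated to $(w,h,t,e)\in\mathcal{P}^{A}_{D}$, as pictured on the left of Figure~\ref{fig:H6-protos}, and record an explicit $\Z$-basis of $H_{1}(M,\Z)$ obtained by concatenating bases of each sub-component. Because the Prym involution $\iota$ in $\calH(6)$ has only two fixed points (versus four in $\calH(4)$), one must take care that the chosen basis diagonalises $\iota$ and places real multiplication by $\mathcal{O}_{D}$ on the Prym summand in such a way that $\lambda = \frac{e+\sqrt{D}}{2}$ acts compatibly with the area pairing.

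For finite admissible $q$, the butterfly move replaces the horizontal splitting by the one whose new horizontal direction is $(w+qt,qh)$. I would then write down $2\times 2$ matrices $L_{1}, L_{2}$ whose columns give bases of the two sub-pieces of the new splitting in terms of the old basis, with columns derived from $(w+qt,qh)$ and the transverse cycle, exactly as in the $\calH(4)$ computation. The key minor
\[N \;=\; \lambda'\,L_{1}^{-1}L_{2}\;\sim\;\begin{pmatrix} h & * \\ -qh & w+qt \end{pmatrix}\]
has left-column $\gcd$ equal to $\gcd(qh,w+qt)$, yielding $h' = \gcd(qh,w+qt)$ after a Euclidean row-reduction; the identity $e' = -e - 4qh$ is then forced by computing the trace of the new real-multiplication generator in the new basis. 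The width $w'$ is determined by the area relation $D = e^{2}+4wh = (e')^{2}+4w'h'$. The $q = \infty$ case is handled analogously: the new horizontal direction is $(t,h)$, the analogous $2\times 2$ matrix has left-column $\gcd$ equal to $\gcd(t,h)$, and the trace computation gives $e' = -e - 4h$.

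The main obstacle I expect is getting the coefficient $4$ (rather than the $8$ appearing in $\calH(4)$) correct in the area relation and in the change-of-basis trace computation. This coefficient reflects how $\lambda$ acts on the $\iota$-anti-invariant part of $H_{1}$, which in turn depends on the dimension of the Prym summand and the fixed-point count of $\iota$. Once the $\iota$-adapted basis is fixed and $\lambda$'s matrix is written down explicitly in it, the rest of the argument is a routine $2\times 2$ Euclidean-algorithm reduction, combined with the admissibility check that $(e+4qh)^{2} < D$ is equivalent to the new direction $(w+qt,qh)$ yielding a valid type-$A$ splitting.
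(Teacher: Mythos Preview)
Your proposal is correct and follows essentially the same route as the paper (which in turn is citing \cite{LN20}): a homology change-of-basis leading to a $2\times 2$ key minor that is then reduced by a Euclidean-algorithm procedure. The paper makes explicit the entry you leave as $\ast$, namely $-t-2e-4qh$ (and $w-2e-4h$ for $q=\infty$), which is what drives the later reduction arguments; otherwise your outline matches both the structure and the mechanism of the proof.
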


Similar to the previous cases, the proof relies on the reduction of the matrices
\[\begin{pmatrix}
    h & -t-2e-4qh \\
    -qh & w + qt
\end{pmatrix}\]
for $q < \infty$, or
\[\begin{pmatrix}
    0 & w-2e-4h \\
    -h & t
\end{pmatrix}\]
for $q = \infty$, which appear as minors of a larger matrix acting on homology.

\begin{figure}[t]
    \centering
    \begin{tikzpicture}[scale = 0.8, line width = 1.5pt]
        \draw (0,0) -- (0,2) -- (1,2) -- (1,1) -- (6,1) -- (6,0) -- (2,0) -- (2,-2) -- (1,-2) -- (1,-1) -- (-4,-1) -- (-4,0) -- cycle;
        \draw[gray,dashed] (0,1) -- (1,1);
        \draw[gray,dashed] (0,0) -- (2,0);
        \draw[gray,dashed] (1,-1) -- (2,-1);
        \foreach \i in {2,3,4,5,6}{
            \draw[gray,dashed] (\i-1,0) -- (\i-1,1);
            \node[gray] at (\i-1.5,0.5) {$\i$};
        }
        \foreach \i in {9,10,11,12,13}{
            \draw[gray,dashed] (\i-12,-1) -- (\i-12,0);
            \node[gray] at (\i-11.5,-0.5) {$\i$};
        }
        \node[gray] at (0.5,1.5) {$1$};
        \node[gray] at (5.5,0.5) {$7$};
        \node[gray] at (-3.5,-0.5) {$8$};
        \node[gray] at (1.5,-1.5) {$14$};
        \foreach \x/\y in {0/0,0/1,0/2,1/2,1/1,2/1,6/1,6/0,2/0,2/-1,2/-2,1/-2,1/-1,0/-1,-4/-1,-4/0}{
            \node at (\x,\y) {$\bullet$};
        }
        \draw[line width = 0pt, fill = blue, opacity = 0.3] (0,0) -- (3,1) -- (2,1) -- (0,1/3) -- cycle;
        \draw[line width = 0pt, fill = blue, opacity = 0.3] (2,0) -- (3,0) -- (6,1)-- (5,1) -- (2,0);
        \draw[line width = 0pt, fill = blue, opacity = 0.3] (5,0) -- (6,0) -- (6,1/3) -- (5,0);
        \draw[color = purple] (0,0) -- (3,1);
        \draw[color = purple] (3,0) -- (6,1);
    \end{tikzpicture}
    \caption{An origami corresponding to the prototype $(6,1,0,-5)\in\mathcal{P}^{A}_{49}$. The direction corresponding to the butterfly move $B_{2}$ and one of the resulting simple cylinders are shown in purple and blue, respectively.}
    \label{fig:prym6-sts-example-1}
\end{figure}
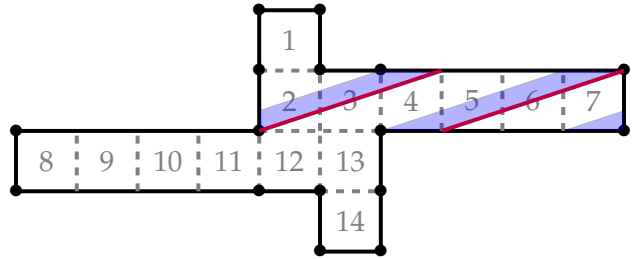

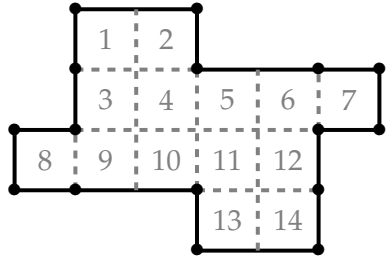
\begin{figure}[t]
    \centering
    \begin{tikzpicture}[scale = 0.8, line width = 1.5pt]
        \draw (0,0) -- (0,2) -- (2,2) -- (2,1) -- (5,1) -- (5,0) -- (4,0) -- (4,-2) -- (2,-2) -- (2,-1) -- (-1,-1) -- (-1,0) -- cycle;
        \foreach \i in {1,2}{
            \node[gray] at (\i-0.5,1.5) {$\i$};
        }
        \foreach \i in {3,4,5,6,7}{
            \node[gray] at (\i-2.5,0.5) {$\i$};
        }
        \foreach \i in {8,9,10,11,12}{
            \node[gray] at (\i-8.5,-0.5) {$\i$};
        }
        \foreach \i in {13,14}{
            \node[gray] at (\i-10.5,-1.5) {$\i$};
        }
        \foreach \i in {0,1}{
            \draw[gray,dashed] (4-4*\i,-\i) -- (4-4*\i,1-\i);
            \draw[gray,dashed] (1+2*\i,-1-\i) -- (1+2*\i,2-\i);
            \draw[gray,dashed] (0+2*\i,1-2*\i) -- (2+2*\i,1-2*\i);
        }
        \draw[gray,dashed] (0,0) -- (4,0);
        \draw[gray,dashed] (2,1) -- (2,-1);
        \foreach \x/\y in {0/0,0/1,0/2,2/2,2/1,4/1,5/1,5/0,4/0,4/-1,4/-2,2/-2,2/-1,0/-1,-1/-1,-1/0}{
            \node at (\x,\y) {$\bullet$};
        }
    \end{tikzpicture}
    \caption{An origami corresponding to the prototype $(5,2,0,-3)\in\mathcal{P}_{49}$.}
    \label{fig:prym6-sts-example-2}
\end{figure} 

Consider the origami $((2,3,4,5,6,7)(8,9,10,11,12,13),(1,12,2)(3,13,4))$ shown in Figure~\ref{fig:prym6-sts-example-1}. It can be checked that this origami corresponds to the prototype $(6,1,0,-5)\in\mathcal{P}^{A}_{49}$. In particular, $q = 2$ is admissible. The butterfly move direction $(w_{2}+qt_{2},qh_{2}) = (6,2)$ can be seen in Figure~\ref{fig:prym6-sts-example-1}. Shadowing the butterfly move, we perform $S^{-1}\circ T^{-2}$. The cusp representative of the resulting origami is $$((1,2)(3,4,5,6,7)(8,9,10,11,12)(13,14),(1,9,3)(2,10,4)(5,13,11)(6,14,12)),$$ shown in Figure~\ref{fig:prym6-sts-example-2}, and it can be checked that it does indeed correspond to the prototype $B_{2}(6,1,0,-5) = (5,2,0,-3)$.

As in the genus two case above, we see that shadowing the butterfly move costs $O(n^{2})$ to move to the cusp representative followed by a Euclidean algorithm operation costing $O(\max\{w_{2}+qt_{2},qh_{2}\})$, for finite $q$, or $O(\max\{t_{2},h_{2}\})$ for $q = \infty$.

\subsection{The classification proof and implied algorithm}

The classification proof in this setting can be sketched as follows:
\begin{enumerate}
    \item Prove that every translation surface $X$ in $\Omega\mathcal{E}_{D}(6)$ has a prototype of type $A$ in its $\GL^{+}(2,\R)$-orbit corresponding to a prototype $(w,h,t,e)$ in $\mathcal{P}_{D}^{A}$.
    \item Prove that every prototype in $\mathcal{P}_{D}^{A}$ can be sent by a sequence of butterfly moves to a reduced or almost-reduced prototype.
    \item Prove that reduced or almost-reduced prototypes can be connected to one another to form at most two components inside $\mathcal{P}_{D}^{A}$. Hence, $\mathcal{P}_{D}^{A}$ has at most two components.
    \item Prove that the two connected components of $\mathcal{P}_{D}^{A}$ can be connected by an element of $\GL^{+}(2,\R)$ by finding a surface with two different cylinder directions corresponding to a prototype in each component, respectively. The element of $\GL^{+}(2,\R)$ is the element that moves from one prototype to the intermediate surface and then to the second prototype. We shall call this intermediate surface the \textit{bridge surface}.
\end{enumerate}

In this setting, we can choose the target prototypes $(\frac{D}{4},1,0,0)$ if $D \equiv 0\!\!\mod 4$, or $(\frac{D-1}{4},1,0,1)$ if $D \equiv 1\!\!\mod 4$. The target origamis will be the cusp representatives of the cusps associated to these prototypes.

Since $\mathcal{P}_{D}^{A}$ has two connected components, we will also need a \textit{pseudo-target} prototype in the components not containing the target. For $D$ even, this will be $(\frac{D-4}{4},1,0,2)$, and for $D$ odd this will be $(\frac{D-1}{8},2,0,1)$. The associated cusp representatives will be called \textit{pseudo-target origamis}.

The algorithm for connecting origamis to the target origami is then as follows:
\begin{enumerate}
    \item Move to a four-cylinder origami corresponding to a prototype in $\mathcal{P}_{D}^{A}$.
    \item Shadow the butterfly moves taking you to a reduced or almost-reduced prototype.
    \item If the resulting prototype is in the same connected component of $\mathcal{P}_{D}^{A}$ as the target prototype, then shadow the sequence of butterfly moves that takes you to the target prototype and reach the target origami.
    \item If the reduced prototype is not in the same connected component of $\mathcal{P}_{D}^{A}$ as the target prototype, then shadow the sequence of butterfly moves that takes you to the cusp corresponding to one of the prototypes of the bridge surface. To do this, travel via the pseudo-target origami. Move through the origami corresponding to the bridge surface into the cusp corresponding to the other prototype of the bridge surface. Now, as above, repeat steps (2) and (3) to reach the target origami.
\end{enumerate}

\subsection{Bounding the number of butterfly moves}

Again, the argument is similar to that given in Subsection~\ref{subsec:butterfly-bound}.

\subsubsection{Connecting to (almost-)reduced prototypes} Given a prototype $(w,h,t,e)\in\mathcal{P}^{A}_{D}$ we can apply the butterfly move $B_{1}$ $O(\log n)$ times to achieve a reduced prototype $(w',1,0,e')$ or an almost-reduced prototype $(w',2,0,e')$ when $D\equiv 1\!\!\mod\! 8$. Indeed, the key minor we are reducing is the matrix
\[\begin{pmatrix}
    h & -t-2e-4h \\
    -h & w+t
\end{pmatrix}.\]
If $\gcd(w+t,h) < h$, then we have $h' \leq \frac{h}{2}$. Otherwise, if $\gcd(w+t,h) = h$, we obtain
\[\begin{pmatrix}
    h & -t-2e-4h \\
    -h & w+t
\end{pmatrix}\xrightarrow[]{\text{basis reduction}} \begin{pmatrix}
    w-2e-4h & 0\\
    0 & h
\end{pmatrix}\]
corresponding to the prototype $(w',h',t',e') = (w-2e-4h,h,0,-e-4h)$.

Applying $B_{1}$ again, if $\gcd(w'+t',h') = \gcd(w',h') < h'$, we obtain $(w'',h'',t'',e'')$ with $h'' \leq \frac{h'}{2} = \frac{h}{2}$, or we have $\gcd(w',h') = h' = h$. In the latter case, similar to the argument in Subsection~\ref{subsec:butterfly-bound}, we obtain
\[\begin{pmatrix}
    h' & -2e'-4h' \\
    -h' & w'
\end{pmatrix}\xrightarrow[]{\text{basis reduction}} \begin{pmatrix}
    w'-2e'-4h' & 0 \\
    0 & h'
\end{pmatrix}\]
so that, since $\gcd(w''+t'',h'') = \gcd(w'-2e'-4h',h') = \gcd(2e',h') \in \{1,2\}$, a further application of $B_{1}$ achieves $(w''',1,0,e''')$ or $(w''',2,0,e''')$.

In the latter case, if $D$ is even, then $e'''$ is even and so we must have $\gcd(w''',2) = 1$. Hence, a further application of $B_{1}$ arrives at a reduced prototype. If $D\equiv 1\!\!\mod\! 8$, then $w'''$ is either even or odd. In the former case, $(w''',2,0,e''')$ is almost-reduced and we cannot reach a reduced prototype. In the latter case, $\gcd(w''',2) = 1$ and we achieve a reduced prototype after a further application of $B_{1}$.

Therefore, since we again have that $\log h = O(\log n)$, we achieve a reduced prototype or an almost-reduced prototype in $O(\log n)$ applications of $B_{1}$.

\subsubsection{Connecting (almost-)reduced prototypes}

Lanneau--Nguyen proved the following results (we have again ignored the exceptional discriminants since we are interested in asymptotics).

\begin{theorem}[{\cite[Theorem A.2]{LN20}}]\label{thm:LN-A.2}
    For $D$ a large enough discriminant, $S^{1}_{D}$ is non-empty and has
    \begin{itemize}
        \item three components, $\{e\in S^{1}_{D}: e\equiv 0\,\,\text{or}\,\,4\!\!\mod\!8\},\{e\in S^{1}_{D}: e\equiv 2\!\!\mod\!8\}$, and $\{e\in S^{1}_{D}: e\equiv -2\!\!\mod\!8\}$, if $D\equiv 4\!\!\mod\!8$.
        \item two components,
        \begin{itemize}
            \item[-] $\{e\in S^{1}_{D}: e\equiv 1\,\,\text{or}\,\,3\!\!\mod\!8\}$ and $\{e\in S^{1}_{D} : e\equiv -1\,\,\text{or}\,\,-3\!\!\mod\!8\}$, if $D\equiv 1\!\!\mod\!8$,
            \item[-] $\{e\in S^{1}_{D}: e\equiv 0\,\,\text{or}\,\,4\!\!\mod\!8\}$ and $\{e\in S^{1}_{D} : e\equiv 2\,\,\text{or}\,\,-2\!\!\mod\!8\}$, if $D\equiv 0\!\!\mod\!8$,
        \end{itemize}
        \item only one component, otherwise.
    \end{itemize}
    Moreover, if $D\equiv 1\!\!\mod\!8$, then $S^{2}_{D}$ is non-empty and connected.
\end{theorem}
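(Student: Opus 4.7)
The plan is to mimic the approach that McMullen used for $\mathcal{H}(2)$ and that Lanneau--Nguyen themselves used for $\mathcal{H}(4)$: identify the butterfly moves that preserve (almost-)reducedness, read them off as explicit moves on the integer parameter $e$, exhibit invariants separating the claimed components, and finally prove connectivity inside each component by a descent argument.

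First I would compute when a butterfly move $B_q$ applied to a reduced prototype $(w,1,0,e) \in \mathcal{P}^A_D$ stays reduced. Using the previous proposition, $h' = \gcd(w+qt,qh) = \gcd(w,q)$ for finite $q$, so reducedness is preserved precisely when $q \mid w$, and then $e' = -e - 4q$, with admissibility $(e+4q)^2 < D$. For almost-reduced prototypes $(w,2,0,e)$, which only arise when $D \equiv 1 \!\!\mod 8$, the analogous calculation yields $\gcd(w,2q) = 2$ together with $e \mapsto -e-8q$. These formulae turn $\mathcal{S}^1_D$ and $\mathcal{S}^2_D$ into explicit graphs on integer points in an interval.

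Second I would extract the invariants. Every reducedness-preserving step changes $e$ by a multiple of $4$, so $e \!\!\mod 4$ is preserved; refining using the parity of $q$ (controlled by $q \mid w$ and $w = (D-e^2)/4$) yields an invariant modulo $8$ in each congruence class of $D$. A case analysis of $D \!\!\mod 8$ then reproduces the component counts in the statement: for $D \equiv 4 \!\!\mod 8$ the admissible $e$ are even, and the three cosets $e \equiv 0,4$, $e \equiv 2$, $e \equiv -2$ cannot be joined; for $D \equiv 1 \!\!\mod 8$ and $D \equiv 0 \!\!\mod 8$ a $\pm$ dichotomy on $e \!\!\mod 8$ survives, analogous to the $A$/$B$ split in $\mathcal{H}(2)$; and in all other cases a single class remains. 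For $\mathcal{S}^2_D$ the step $e \mapsto -e-8q$ has size $8$, so the sole constraint $e^2 \equiv D \!\!\mod 16$ already determines one component.

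Third I would establish connectivity inside each invariant class by a Euclidean-algorithm-style descent on $|e|$. Pick $e_\star$ minimising $|e|$ in the class. Given $e \neq e_\star$ in the class, the admissibility window $(e+4q)^2 < D$ contains $\Theta(\sqrt{D})$ integers, and a positive proportion of these meet the divisibility condition $q \mid w(e)$ for $D$ large, so one may always take a step that moves $e$ closer to $e_\star$ along the arithmetic progression. Iterating produces a path of length $O(\log D)$ inside $\mathcal{S}^1_D$ from $e$ to $e_\star$, and the same scheme handles $\mathcal{S}^2_D$. The main obstacle I expect is the delicate mod-$8$ bookkeeping at $D \equiv 4 \!\!\mod 8$: ruling out \emph{every} admissible $q$ as a bridge between the three classes is a finite but annoying check, and controlling the small-$D$ exceptions where the proportion-counting in the descent degenerates is precisely what forces the ``$D$ large enough'' hypothesis in the statement.
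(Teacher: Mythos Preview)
The paper does not prove this theorem; it is quoted verbatim from \cite[Theorem~A.2]{LN20} and used as a black box in Section~\ref{sec:H6}. There is therefore no proof in the present paper to compare your proposal against.

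That said, your outline contains a concrete computational error. You correctly compute that for a reduced prototype $(w,1,0,e)$ one has $h' = \gcd(w+qt,qh) = \gcd(w,q)$, but then conclude that ``reducedness is preserved precisely when $q \mid w$''. This is backwards: $h' = \gcd(w,q) = 1$ holds precisely when $\gcd(q,w)=1$, not when $q \mid w$; the condition $q \mid w$ would force $h' = q$, which is $1$ only for $q=1$. The error propagates through the rest of your sketch: your later references to ``the divisibility condition $q \mid w(e)$'' in the descent and to the parity of $q$ being ``controlled by $q \mid w$'' must all be replaced by the coprimality condition. With that correction the skeleton --- read off the moves $e \mapsto -e-4q$ subject to $\gcd(q,w)=1$, extract a mod-$8$ invariant from the resulting parity constraints on $q$, then run a descent inside each class --- is the standard approach (it is exactly parallel to what is done in \cite{McM} for $\mathcal{H}(2)$ and in \cite{LN14} for $\mathcal{H}(4)$, as summarised earlier in the paper). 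But the mod-$8$ case analysis that pins down exactly which residue classes merge is sensitive to whether $q$ is constrained by divisibility or by coprimality, so that part of your sketch would have to be redone from scratch.
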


\begin{theorem}[{\cite[Theorem 3.4]{LN20}}]\label{thm:LN-3.4}
    For $D$ a large enough discriminant, $\mathcal{P}^{A}_{D}$ is non-empty and has
    \begin{itemize}
        \item one component if $D\equiv 5\!\!\mod\!8$,
        \item two components, $\{(w,h,t,e)\in\mathcal{P}^{A}_{D}\,:\, e\equiv 0\!\!\mod\! 4\}$ and $\{(w,h,t,e)\in\mathcal{P}^{A}_{D}\,:\,e\equiv 2\!\!\mod\!4\}$ if $D\equiv 0,4\!\!\mod\!8$,
        \item two components, $\mathcal{P}^{A_{i}}_{D} := \{(w,h,t,e)\in\mathcal{P}^{A}_{D}\,:\,e\in\mathcal{S}^{i}_{D}$\}, for $i = 1,2$, if $D\equiv 1\!\!\mod\!8$.
    \end{itemize}
\end{theorem}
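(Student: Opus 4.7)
The plan is to combine the $O(\log n)$ reduction of the preceding subsubsection with Theorem~\ref{thm:LN-A.2} and a parity analysis of the butterfly move. First, non-emptiness for large $D$ is immediate from Theorem~\ref{thm:LN-A.2}, which already produces reduced prototypes $(w,1,0,e)$ with $e\in\mathcal{S}^{1}_{D}$ (and, when $D\equiv 1\!\!\mod\!8$, almost-reduced prototypes $(w,2,0,e)$ with $e\in\mathcal{S}^{2}_{D}$); these visibly lie in $\mathcal{P}^{A}_{D}$ since $\lambda=(e+\sqrt{D})/2<w/2$ as soon as $|e|$ is small compared to $\sqrt{D}$.

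Next I would identify the invariants governing the component count by staring at the butterfly-move formula $e' = -e - 4qh$. When $D\equiv 0,4\!\!\mod\!8$, the congruence $e^{2}\equiv D\!\!\mod\!4$ forces $e$ to be even, so $e'\equiv -e\equiv e\!\!\mod\!4$; hence the residue $e\!\!\mod\!4$ is preserved by every butterfly move, producing the two candidate components $\{e\equiv 0\!\!\mod\!4\}$ and $\{e\equiv 2\!\!\mod\!4\}$. When $D\equiv 5\!\!\mod\!8$ no such mod-$4$ obstruction is available and one expects a single component. For $D\equiv 1\!\!\mod\!8$ the invariant is subtler: as shown in the preceding subsubsection, the $B_{1}$-reduction terminates either at a genuinely reduced prototype $(w',1,0,e')$ or at an almost-reduced $(w',2,0,e')$ with $w'$ even, and I would verify that this dichotomy is preserved by \emph{any} butterfly move (not just by $B_{1}$) by tracking the $2$-adic valuations of $w$ and $h$ through the basis reduction of
\[\begin{pmatrix}-h & -t-2e-4qh\\ h & w+qt\end{pmatrix},\]
so that membership in $\mathcal{P}^{A_{1}}_{D}$ versus $\mathcal{P}^{A_{2}}_{D}$ is a genuine orbit invariant.

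For connectivity, the $O(\log n)$ $B_{1}$-reduction connects any prototype to a (possibly almost-)reduced one, and Theorem~\ref{thm:LN-A.2} classifies the components of $\mathcal{S}^{1}_{D}$ and $\mathcal{S}^{2}_{D}$. The remaining work is to match these with the claimed components of $\mathcal{P}^{A}_{D}$: in particular, for $D\equiv 4\!\!\mod\!8$ Theorem~\ref{thm:LN-A.2} gives \emph{three} components of $\mathcal{S}^{1}_{D}$, so I would exhibit short loops of butterfly moves through intermediate prototypes with $h=2$ merging the two residues $e\equiv\pm 2\!\!\mod\!8$, mirroring the explicit paths listed after Theorem~\ref{thm:H4-SD}. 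The main obstacle is the $D\equiv 1\!\!\mod\!8$ case: one must prove that \emph{no} auxiliary path through large $h$ can connect $\mathcal{P}^{A_{1}}_{D}$ with $\mathcal{P}^{A_{2}}_{D}$. This is the analogue of the spin-invariant argument of Hubert--Leli\`evre in $\calH(2)$, and I would expect to establish it by extracting a $2$-adic/parity obstruction from the above minor reduction, following closely the template of the corresponding proof in \cite{LN20}.
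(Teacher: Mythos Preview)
This theorem is a \emph{cited result}: it is quoted verbatim from \cite[Theorem~3.4]{LN20} and the paper gives no proof of it at all. What the paper does instead is \emph{mine} Lanneau--Nguyen's proof for the explicit short paths (the three displayed $B_{q}$-sequences for $D\equiv 4\!\!\mod\!8$ and the two for $D\equiv 1\!\!\mod\!8$) that are needed downstream to bound the number of $\SL(2,\Z)$-steps. So there is nothing to compare your attempt against; you are supplying an argument where the paper intentionally provides none.

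That said, your sketch is a reasonable roadmap for how the Lanneau--Nguyen proof actually goes, and the parity observation $e'=-e-4qh\equiv e\!\!\mod\!4$ for even $e$ is exactly the invariant that separates the two components when $D\equiv 0,4\!\!\mod\!8$. Two caveats. First, your description of $\mathcal{P}^{A_{i}}_{D}$ as ``where the $B_{1}$-reduction terminates'' is heuristically right but does not match the paper's (and \cite{LN20}'s) actual definition, which is phrased in terms of whether the $e$-coordinate lies in $\mathcal{S}^{1}_{D}$ or $\mathcal{S}^{2}_{D}$; you would need to check these coincide. Second, the hard direction for $D\equiv 1\!\!\mod\!8$---that no path through large $h$ can cross between $\mathcal{P}^{A_{1}}_{D}$ and $\mathcal{P}^{A_{2}}_{D}$---is genuinely non-trivial in \cite{LN20} and does not reduce to a simple $2$-adic bookkeeping on the minor you wrote down; you correctly flag this as the main obstacle, but ``following closely the template'' is not yet an argument.
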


In all cases, $|S^{i}_{D}| = O(n)$.

If $D\equiv 0\!\!\mod\!8$, we can connect to the target or pseudo-target prototype in one of the two components of $S^{1}_{D}$ in $O(n)$ butterfly moves. If $D\equiv 4\!\!\mod\!8$, then the proof of Theorem~\ref{thm:LN-3.4} demonstrates that the components $\{e\in S^{1}_{D}: e\equiv 2\!\!\mod\!8\}$, and $\{e\in S^{1}_{D}: e\equiv -2\!\!\mod\!8\}$ are connected by paths in $\mathcal{P}^{A}_{D}$:
\begin{itemize}
    \item if $D = 12 + 16k, k\geq 2$
    $$(4k+2,1,0,-2)\xrightarrow{B_{2}}(2k-3,2,0,-6)\xrightarrow{B_{\infty}}(2k+1,2,0,-5)\xrightarrow{B_{1}}(4k-6,1,0,-6)$$
    \item if $D = 4 + 32k, k\geq 4$
    $$(8k,1,0,2)\xrightarrow{B_{2}}(4k-12,2,1,-10)\xrightarrow{B_{2}}(4k-4,2,1,-6)\xrightarrow{B_{1}}(8k,1,0,-2)$$
    \item if $D = 20 + 32k, k\geq 3$
    $$(8k+4,1,0,2)\xrightarrow{B_{2}}(4k-10,2,1,-10)\xrightarrow{B_{2}}(2k-1,4,0,-6)\xrightarrow{B_{1}}(8k-20,1,0,-10).$$
\end{itemize}
So, we can connect to the target or pseudo-target prototype in one of the components of $\mathcal{P}^{A}_{D}$ listed in Theorem~\ref{thm:LN-A.2} using $O(n)$ butterfly moves between reduced prototypes and possibly one of the paths in the list above. Finally, if $D\equiv 1\!\!\mod\!8$ then the proof of Theorem~\ref{thm:LN-3.4} demonstrates that the two components of $S^{1}_{D}$ can be connected by a path in $\mathcal{P}^{A_{1}}_{D}$:
\begin{itemize}
    \item if $D = 1 + 16k, k\geq 3$
    $$(4k-6,1,0,-5)\xrightarrow{B_{2}}(2k-1,2,0,-3)\xrightarrow{B_{\infty}}(2k-3,2,0,-5)\xrightarrow{B_{1}}(4k-2,1,0,-3)$$
    \item if $D = 9 + 16k, k\geq 3$
    $$(4k-10,1,0,-7)\xrightarrow{B_{2}}(2k+1,2,0,-1)\xrightarrow{B_{\infty}}(2k-5,2,0,-7)\xrightarrow{B_{1}}(4k+2,1,0,-1).$$
\end{itemize}
So reduced prototypes can be connected to the target reduced prototype using $O(n)$ butterfly moves between reduced prototypes and possibly one of the two paths above, and the almost-reduced prototypes can be connected to the pseudo-target almost-reduced prototype in $O(n)$ butterfly moves between almost-reduced prototypes.

So, to summarise the above, we can apply the butterfly move $B_1$ $O(\log n)$ times to reach a reduced or almost-reduced prototype and then a further $O(n)$ butterfly moves between prototypes (possibly with the addition of one of the bounded length paths above) to reach the target or psuedo-target prototype.

\subsection{The resulting diameter bound}

We must now bound the number of applications of $T$ or $S$ required to connect to the target origamis.

Firstly, observe that the cusp representative of a two-cylinder origami has a four-cylinder direction corresponding to a prototype of type $A$ in its vertical direction. Hence, we can reach such a four-cylinder origami in $O(n)$ applications of $T$ or $S$.

If we have a four-cylinder origami corresponding to a prototype of type $B$, then the proof of~\cite[Proposition 2.4]{LN20} gives us that the cusp representative of this origami has a four-cylinder decomposition of type $A$ in one of the following directions:
\begin{itemize}
    \item $(t_1+t_2,h_1+h_2)$;
    \item $(t_1+t_2-w_1,h_1+h_2)$; or
    \item $(t_1+t_2-w_2,h_1+h_2)$.
\end{itemize}
In each case, this direction can be made horizontal in $O(n)$ applications of $T$ or $S$, after reaching the cusp representative at a cost of $O(n^{2})$ applications of $T$.

So, any origami can be taken to a four-cylinder origami corresponding to a prototype of type $A$ at a cost of $O(n^{2})$.

Similar to Lemmas~\ref{lem:H2-red} and~\ref{lem:H4-red}, we will make use of the following lemma when we connect (almost-)reduced prototypes to a target (almost-)reduced prototype by butterfly moves.

\begin{lemma}\label{lem:H6-red}
    An origami corresponding to a reduced prototype $(w,1,0,e)$ in $\mathcal{P}_{D}$ has $h_{2} = 1$ and cusp width $w_{2} = O(n)$. An origami corresponding to an almost-reduced prototype $(w,2,0,e)$ in $\mathcal{P}_{D}$ has $h_{2}\in\{1,2\}$ and cusp width $k\in\{\frac{w_{2}}{2},w_{2},2w_{2}\}$. So, $k = O(n)$.
\end{lemma}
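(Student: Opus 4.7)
My plan is to follow the template set by Lemmas~\ref{lem:H2-red} and~\ref{lem:H4-red}: identify the area of a type $A$ prototype in $\calH(6)$, determine the vertical and horizontal scalings $(l,s)$ that realise the origami as the prototype (so that $ls\cdot n$ equals the prototype area), and then read off linear equations relating the surface parameters $w_i, h_i$ to the prototype data $\lambda, w, h$.

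Inspecting Figure~\ref{fig:H6-protos}, a type $A$ prototype decomposes into two square cylinders of side $\lambda/2$ (area $\lambda^2/4$ each) and two parallelogram cylinders of width $w/2$ and height $h/2$ (area $wh/4$ each), giving total area $\lambda^2/2 + wh/2$. Using the relation $4wh = D - e^2 = 4\lambda(\lambda-e)$ and $\sqrt{D} = n/2$, this simplifies to $\lambda n/4$. Thus $ls = \lambda/4$; setting $s = \lambda/(4l)$ and equating origami cylinder dimensions to their scaled prototype counterparts gives
\begin{equation*}
w_1 s = \lambda/2, \quad h_1 l = \lambda/2, \quad w_2 s = w/2, \quad h_2 l = h/2,
\end{equation*}
from which $w_1 h_2 = h$ after elimination.

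For the reduced prototype $(w,1,0,e)$ we have $h=1$, forcing $w_1 = h_2 = 1$ and $l = 1/2$, so $h_1 = \lambda$ and $w_2 = w/\lambda = \lambda - e$; the cusp-width formula immediately returns $w_2$, and since $|e| < n/2$ and $\lambda < n/2$ we have $w_2 < n$. For the almost-reduced prototype $(w,2,0,e)$ we have $w_1 h_2 = 2$, so either $(w_1, h_2) = (1,2)$ or $(2,1)$. The primitivity condition $\gcd(h_1,h_2) = 1$ forces $\lambda$ odd in the first case and $\lambda$ even in the second, making the two cases disjoint and determined by the parity of $\lambda$. Running the cusp-width formula through each case, and splitting further on the parities of $w_2$ (and of $\lambda/2$ in case (b)) to compute the relevant gcds, gives cusp widths in the set $\{w_2/2, w_2, 2w_2\}$, with $w_2 \le \lambda - e < n$ in both cases.

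The main obstacle is correctly identifying the prototype area and the matching between origami and prototype cylinders---in particular, the small squares must correspond to $(w_1,h_1)$ rather than $(w_2,h_2)$; once this convention is fixed (exactly as in the $\calH(4)$ case), the algebraic manipulations and the case split on parities are direct computation.
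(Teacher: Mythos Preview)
Your proposal is correct and follows essentially the same route as the paper: compute the prototype area, set up the scaling relations to read off $w_1 h_2 = h$, and then evaluate the cusp-width formula on the resulting parameter possibilities. One small wording slip: in the almost-reduced case $(w_1,h_2)=(2,1)$ it is the \emph{integrality} of $h_1=\lambda/2$ (not the primitivity condition $\gcd(h_1,h_2)=1$, which is automatic when $h_2=1$) that forces $\lambda$ to be even; your conclusion that the two cases are determined by the parity of $\lambda$ is nonetheless correct, and in fact goes a step beyond the paper, which simply lists both parameter tuples without separating them.
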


\begin{proof}
    Let $(w,1,0,e)$ be a reduced prototype. The prototype has area $2(\frac{\lambda}{2})^{2} + \frac{w}{2} = n\frac{\lambda}{4}$. Suppose that in achieving the prototype the origami of area $n$ is scaled by $L$ vertically and $\frac{\lambda}{4L}$ horizontally. We then have
    \[\frac{\lambda}{2} = Lh_{1},\,\,\frac{\lambda}{2} = \frac{\lambda}{4L}w_{1},\,\,\frac{1}{2} = Lh_{2},\,\,\frac{w}{2} = \frac{\lambda}{4L}w_{2},\]
    giving $1 = w_{1}h_{2}$. So the origami has surface parameters $w_{1} = 1$, $h_{1} = \lambda$, $w_{2} = \frac{w}{\lambda}$, $h_{2} = 1$ and cusp width
    \[k = \text{lcm}\left(\frac{w_{1}}{\gcd(w_{1},h_{1})},\frac{w_{2}}{\gcd(w_{2},h_{2})}\right) = \text{lcm}\left(\frac{1}{\gcd(1,\lambda)},\frac{w_{2}}{\gcd(w_{2},1)}\right) = w_{2}.\]

    Let $(w,2,0,e)$ be an almost-reduced prototype. The prototype has area $2(\frac{\lambda}{2})^{2} + w = n\frac{\lambda}{4}$. Suppose that in achieving the prototype the origami of area $n$ is scaled by $L$ vertically and $\frac{\lambda}{4L}$ horizontally. We then have
    \[\frac{\lambda}{2} = Lh_{1},\,\,\frac{\lambda}{2} = \frac{\lambda}{4L}w_{1},\,\,1 = Lh_{2},\,\,\frac{w}{2} = \frac{\lambda}{4L}w_{2},\]
    giving $2 = w_{1}h_{2}$. So the origami has surface parameters
    \[(w_{1},h_{1},w_{2},h_{2})\in\left\{\left(1,\lambda,\frac{w}{\lambda},2\right),\left(2,\frac{\lambda}{2},\frac{2w}{\lambda},1\right)\right\},\]
    and cusp width
    \[k = \text{lcm}\left(\frac{w_{1}}{\gcd(w_{1},h_{1})},\frac{w_{2}}{\gcd(w_{2},h_{2})}\right) \in \left\{\frac{w_{2}}{2},w_{2},2w_{2}\right\}.\]
\end{proof}

As in the previous sections, this means that shadowing a butterfly move between reduced or almost-reduced prototypes only costs $O(n)$.

We now handle the possible residue classes of $D$ separately. Recall that, since we have $D$ being a square, we can ignore the $D\equiv 5\!\!\mod 8$ case.

\subsubsection{$D\equiv 0,4\!\!\mod\!8$} In this case, we can reach the target origami or the pseudo-target origami in each component of $\mathcal{P}^{A}_{D}$ using $O(n^{2}\log n)$ application of $T$ or $S$ --- we apply $B_{1}$ $O(\log n)$ times at a cost of $O(n^2\log n)$ followed by $O(n)$ butterfly moves between reduced prototypes at a cost of $O(n^2)$ and possibly one of the paths listed below Theorem~\ref{thm:LN-3.4} which each cost $O(n^{2})$.

To connect the two components of $\mathcal{P}^{A}_{D}$, Lanneau--Nguyen (see~\cite[Theorem 7.1]{LN20}) construct an origami (with height 2) as shown in Figure~\ref{fig:lABC-origami} and demonstrate that the cylinders $\mathcal{C}$ and $\mathcal{C}'$ give rise to prototypes in differing components of $\mathcal{P}^{A}_{D}$. This is the bridge surface.

\begin{figure}[t]
    \centering
    \begin{tikzpicture}[scale = 0.8, line width = 1.5pt]
        \draw (0,0) -- (0,2) -- node[above]{$A$}(3,2) -- node[above]{$B$}(4,2) -- node[above]{$C$}(5.5,2) -- node[above]{$\overline{B}$}(6.5,2) -- node[above]{$\overline{C}$}(8,2) -- (8,0) -- node[above]{$\overline{A}$}(11,0) -- (11,-2) -- node[below]{$\overline{A}$}(8,-2) -- node[below]{$\overline{B}$}(7,-2) -- node[below]{$\overline{C}$}(5.5,-2) -- node[below]{$B$}(4.5,-2) -- node[below]{$C$}(3,-2) -- (3,0) -- node[below]{$A$} cycle;
        \foreach \x/\y in {0/0,0/2,3/2,4/2,5.5/2,6.5/2,8/2,8/0,11/0,11/-2,8/-2,7/-2,5.5/-2,4.5/-2,3/-2,3/0}{
            \node at (\x,\y) {$\bullet$};
        }
        \draw[line width = 0pt, fill = red, opacity = 0.3] (3,2) -- (4,2) --(5.5,-2) -- (4.5,-2) -- cycle;
        \draw[line width = 0pt, fill = blue, opacity = 0.3] (4.5,-2) -- (8,-2+84/26) -- (8,-2+60/26) -- (5.5,-2) -- cycle;
        \draw[line width = 0pt, fill = blue, opacity = 0.3] (0,-2+84/26) -- (0,-2+60/26) -- (1+5/6,2) -- (5/6,2) -- cycle;
        \draw[line width = 0pt, fill = blue, opacity = 0.3] (5/6,0) -- (3,2) -- (4,2) -- (1+5/6,0) -- cycle;
        \node at (0.7,1.35) {$\mathcal{C}$};
        \node at (2.45,1) {$\mathcal{C}$};
        \node at (6.5,-0.65) {$\mathcal{C}$};
        \node at (4.25,0) {$\mathcal{C}'$};
    \end{tikzpicture}
    \caption{An origami with cylinders giving rise to prototypes in different components of $\mathcal{P}^{A}_{D}$. Cylinder $\mathcal{C}$ is shown in blue while cylinder $\mathcal{C}'$ is shown in red.}
    \label{fig:lABC-origami}
\end{figure}
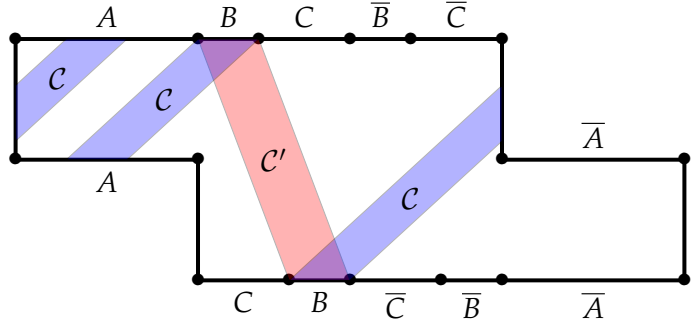

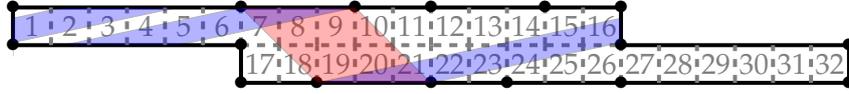
\begin{figure}[t]
    \centering
    \begin{tikzpicture}[scale = 0.5, line width = 1.5pt]
        \draw (0,0) -- (0,1) -- (16,1) -- (16,0) -- (22,0) -- (22,-1) -- (6,-1) -- (6,0) -- cycle;
        \draw[gray,dashed] (6,0) -- (16,0);
        \node[gray] at (0.5,0.5) {$1$};
        \node[gray] at (21.5,-0.5) {$32$};
        \foreach \i in {2,...,16}{
            \draw[gray,dashed] (\i-1,0) -- (\i-1,1);
            \node[gray] at (\i-0.5,0.5) {$\i$};
        }
        \foreach \i in {17,...,31}{
            \draw[gray,dashed] (\i-10,-1) -- (\i-10,0);
            \node[gray] at (\i-10.5,-0.5) {$\i$};
        }
        \foreach \x/\y in {0/0,0/1,6/1,9/1,11/1,14/1,16/1,16/0,22/0,22/-1,16/-1,13/-1,11/-1,8/-1,6/-1,6/0}{
            \node at (\x,\y) {$\bullet$};
        }
        \draw[line width = 0pt, fill = red, opacity = 0.3] (6,1) -- (9,1) -- (11,-1) -- (8,-1) -- (6,1);
        \draw[line width = 0pt, fill = blue, opacity = 0.3] (8,-1) -- (16,-1+24/14) -- (16,-1+15/14) -- (11,-1) -- (8,-1);
        \draw[line width = 0pt, fill = blue, opacity = 0.3] (0,-1+12/7) -- (4/3,1) -- (4/3+3,1) -- (0,-1+15/14) -- (0,-1+12/17);
        \draw[line width = 0pt, fill = blue, opacity = 0.3] (4/3,0) -- (6,1) -- (9,1) -- (4/3+3,0) -- (4/3,0);
    \end{tikzpicture}
    \caption{An origami with cylinder $\mathcal{C}$, shown in blue, giving rise to prototype $(63,1,0,2)$, and cylinder $\mathcal{C}'$, shown in red, giving rise to prototype $(30,2,1,-4)$.}
    \label{fig:lABC-example-1}
\end{figure}

\begin{figure}[t]
    \centering
    \begin{tikzpicture}[scale = 0.45, line width = 1.5pt]
        \draw (0,0) -- (0,10) -- (1,10) -- (1,1) -- (7,1) -- (7,0) -- (2,0) -- (2,-10) -- (1,-10) -- (1,-1) -- (-5,-1) -- (-5,0) -- cycle;
        \draw[gray,dashed] (0,0) -- (2,0);
        \node[gray] at (0.5,9.5) {$1$};
        \node[gray] at (1.5,-9.5) {$32$};
        \foreach \i in {2,...,10}{
            \draw[gray,dashed] (0,11-\i) -- (1,11-\i);
            \node[gray] at (0.5,10.5-\i) {$\i$};
        }
        \foreach \i in {11,...,16}{
            \draw[gray,dashed] (\i-10,0) -- (\i-10,1);
            \node[gray] at (\i-9.5,0.5) {$\i$};
        }
        \foreach \i in {17,...,22}{
            \draw[gray,dashed] (\i-21,-1) -- (\i-21,0);
            \node[gray] at (\i-21.5,-0.5) {$\i$};
        }
        \foreach \i in {23,...,31}{
            \draw[gray,dashed] (1,22-\i) -- (2,22-\i);
            \node[gray] at (1.5,22.5-\i) {$\i$};
        }
        \foreach \x/\y in {0/0,0/1,0/10,1/10,1/1,2/1,7/1,7/0,2/0,2/-1,2/-10,1/-10,1/-1,0/-1,-5/-1,-5/0}{
            \node at (\x,\y) {$\bullet$};
        }
    \end{tikzpicture}
    \begin{tikzpicture}[scale = 0.45, line width = 1.5pt]
        \draw (0,0) -- (0,1) -- (1,4) -- (3,4) -- (2,1) -- (10,1) -- (10,0) -- (4,0) -- (4,-1) -- (3,-4) -- (1,-4) -- (2,-1) -- (-6,-1) -- (-6,0) -- cycle;
        \draw[gray,dashed] (0,0) -- (4,0);
        \foreach \i in {0,1}{
            \draw[gray,dashed] (1+2*\i,4-5*\i) -- (1+2*\i,1-5*\i);
            \draw[gray,dashed] (2,4-5*\i) -- (2,1-5*\i);
        }
        \foreach \i in {0,1,2}{
            \draw[gray,dashed] (\i/3,\i+1) -- (2+\i/3,\i+1);
        }
        \foreach \i in {0,1,2}{
            \draw[gray,dashed] (2-\i/3,-\i-1) -- (4-\i/3,-\i-1);
        }
        \node[gray] at (2.5,3.5) {$1$};
        \node[gray] at (1.5,3.5) {$2$};
        \node[gray] at (0.5,2.5) {$3$};
        \node[gray] at (1.5,2.5) {$4$};
        \node[gray] at (0.5,1.5) {$5$};
        \node[gray] at (1.5,1.5) {$6$};
        \node[gray] at (9.5,0.5) {$16$};
        \node[gray] at (-5.5,-0.5) {$17$};
        \node[gray] at (2.5,-1.5) {$27$};
        \node[gray] at (3.5,-1.5) {$28$};
        \node[gray] at (2.5,-2.5) {$29$};
        \node[gray] at (3.5,-2.5) {$30$};
        \node[gray] at (2.5,-3.5) {$31$};
        \node[gray] at (1.5,-3.5) {$32$};
        \foreach \i in {7,...,15}{
            \draw[gray,dashed] (\i-6,0) -- (\i-6,1);
            \node[gray] at (\i-6.5,0.5) {$\i$};
        }
        \foreach \i in {18,...,26}{
            \draw[gray,dashed] (\i-23,-1) -- (\i-23,0);
            \node[gray] at (\i-22.5,-0.5) {$\i$};
        }
        \foreach \x/\y in {0/0,0/1,1/4,3/4,2/1,4/1,10/1,10/0,4/0,4/-1,3/-4,1/-4,2/-1,0/-1,-6/-1,-6/0}{
            \node at (\x,\y) {$\bullet$};
        }
        \node at (2,10) {$\,$};
        \node at (2,-10) {$\,$};
    \end{tikzpicture}
    \caption{The origami on the left corresponds to prototype $(63,1,0,2)$. The origami on the right corresponds to prototype $(30,2,1,-4)$.}
    \label{fig:lABC-example-2}
\end{figure}
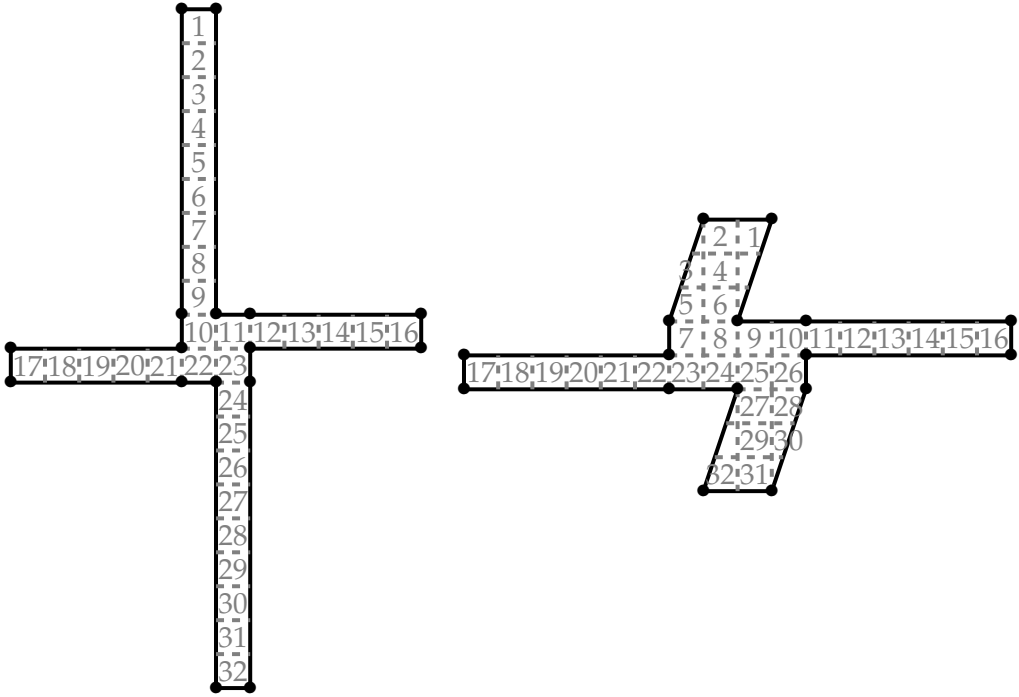

For example, when $n = 32$, the origami
\begin{multline*}
    ((1,2,3,\ldots,16)(17,18,19,\ldots,32),\\
    (7,19,9,21,11,18,8,20,10,17)(12,24,14,26,16,23,14,25,15,22))
\end{multline*}
shown in Figure~\ref{fig:lABC-example-1} has a cylinder $\mathcal{C}$ in direction $(14,3)$ giving rise (after an application of $T^{2}\circ S^{-1}\circ T^{-1}\circ S^{-1}\circ T^{-4}$) to the origami
\[
((10,11,\ldots,16)(17,18,\ldots,23),(1,22,10,9,\ldots,2)(11,32,31,\ldots,23))
\]
on the left of Figure~\ref{fig:lABC-example-2} corresponding to the prototype $(63,1,0,2)$, while the cylinder $\mathcal{C}'$ in direction $(-2,2)$ (after applying $S$) gives rise to the origami
\begin{multline*}
    ((1,2)(3,4)(5,6)(7,8,\ldots,16)(17,18,\ldots,26)(27,28)(29,30)(31,32),\\
    (1,24,8,6,4,2,23,7,5,3)(9,32,30,28,26,10,31,29,27,25))
\end{multline*}
on the right of Figure~\ref{fig:lABC-example-2} corresponding to the prototype $(30,2,1,-4)$.

Letting $l_{A},l_{B}$ and $l_{C}$ denote the lengths of the sides $A,B$ and $C$, respectively, we see that cylinder $\mathcal{C}$ has direction $(l_{A}+2l_{B}+l_{C},3)$ and so can be made horizontal in $O(\max\{l_{A}+2l_{B}+l_{C},3\}) = O(n)$ applications of $T$ or $S$. Similarly, the cylinder $\mathcal{C}'$ has direction $(-l_{C},2)$ and so can also be made horizontal in $O(n)$ applications of $T$ or $S$. Let the resulting origamis be called the \textit{bridge-end origamis}.

So, to connect to the target origami, we either apply $O(n^2\log n)$ applications of $T$ or $S$ to reach it directly, or we first use $O(n^2\log n)$ applications of $T$ or $S$ to reach the pseudo-target origami, then $O(n^2\log n)$ applications of $T$ or $S$ to reach one of the bridge-end origamis, then $O(n)$ applications to transition through the bridge origami to the other bridge-end origami, and finally $O(n^2\log n)$ applications of $T$ or $S$ to reach the the target origami.

Therefore, we obtain the diameter bound $O(n^{2}\log n)$.

\subsubsection{$D\equiv 1\!\!\mod\!8$} As above, the target or pseudo-target origami can be reached at a cost of $O(n^{2}\log n)$. Here, Lanneau--Nguyen connect the two components of $\mathcal{P}^{A}_{D}$ by proving the existence of a prototype in $\mathcal{P}^{A_{2}}_{D}$ that contains a simple cylinder in a non-horizontal direction which gives rise to a prototype in $\mathcal{P}^{A_{1}}_{D}$. To bound the number of applications of $T$ or $S$ required to do this, we must determine the direction of this cylinder inside the corresponding origami.

Lanneau and Nguyen prove the following.

\begin{proposition}[See {\cite[Section 8]{LN20}}]\label{prop:LN20-sec8}
    For any $D \equiv 1\!\!\mod\!8$, with $D$ large enough, there exists $(w,2,0,e)\in \mathcal{S}^{2}_{D}$ such that there is a simple cylinder in the direction
    $$\left(\frac{w}{2}+\left\lfloor\frac{\lambda}{2}\right\rfloor\frac{\lambda}{2},j+1+\frac{\lambda}{2}\right),$$
    for some $j\in\N$, for which the associated prototype lies in $\mathcal{P}^{A_{1}}_{D}$.
\end{proposition}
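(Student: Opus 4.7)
The plan is to produce an explicit candidate prototype in $\mathcal{S}^{2}_{D}$ and then, using the geometric picture of the associated Prym surface (Figure~\ref{fig:H6-protos}, type~$A$), exhibit the required simple cylinder by a direct combinatorial argument. By Theorem~\ref{thm:LN-A.2}, for $D \equiv 1\!\!\mod\!8$ sufficiently large the set $\mathcal{S}^{2}_{D}$ is non-empty and connected, so one can pick some $(w,2,0,e)$ with $\lambda = \frac{e+\sqrt{D}}{2} < \frac{w}{2}$ (so that the prototype really lies in $\mathcal{P}^{A_{2}}_{D}$) and $e^{2}\equiv D\!\!\mod\!16$. The appearance of $\lfloor\lambda/2\rfloor$ in the claimed direction is the combinatorial hint that the core curve of the cylinder should wrap $\lfloor\lambda/2\rfloor$ times around the ``torus factor'' of the Prym surface before closing up.

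Next, I would construct the cylinder explicitly. Starting from a marked Weierstrass point on the bottom edge of Figure~\ref{fig:H6-protos}, I follow a geodesic with holonomy $\left(\frac{w}{2}+\lfloor\frac{\lambda}{2}\rfloor\cdot\frac{\lambda}{2},\,j+1+\frac{\lambda}{2}\right)$, where $j\in\N$ is chosen minimally so that the trajectory returns to its starting point without meeting any zero of $\omega$ in its interior. The horizontal component is the width $\frac{w}{2}$ of the ``neck'' plus $\lfloor\frac{\lambda}{2}\rfloor$ translates of length $\frac{\lambda}{2}$, each corresponding to one crossing of the torus block, while the vertical component $j+1+\frac{\lambda}{2}$ records the total vertical winding. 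Producing the correct $j$ is an arithmetic exercise: one needs the resulting holonomy vector to be primitive in the lattice of relative periods, and this is where the hypothesis $D\equiv 1\!\!\mod\!8$ is used.

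With the cylinder in hand, the associated prototype is extracted by the same type of change-of-basis calculation on $H_{1}$ already carried out in Section~\ref{subsec:proto} and in the proofs of Lanneau--Nguyen's butterfly-move propositions: the new basis is the pair (core curve, dual curve) of the new cylinder decomposition, and a Euclidean-algorithm reduction puts the change-of-basis matrix into the prototypical form. Because the cylinder is simple, the new prototype is reduced, i.e.\ of the form $(w',1,0,e')$; the relation $D=(e')^{2}+4w'$ then determines $w'$ from $e'$. The final step is to check that $e'\in\mathcal{S}^{1}_{D}$ and that it lands in the component corresponding to $\mathcal{P}^{A_{1}}_{D}$, which reduces to identifying the congruence class of $e'$ modulo~$8$ in terms of the original data $(w,e)$.

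The main obstacle in this plan is twofold. First, proving that a suitable $j$ exists and that the resulting cylinder is \emph{simple} (no zero in its interior) is a nontrivial arithmetic fact that genuinely uses the ``$D$ large enough'' hypothesis; this is closely tied to asking when points of an explicit lattice coset fall in a specific fundamental domain. Second, one must control the component of the target: a priori the Euclidean reduction could produce $e'\in\mathcal{S}^{2}_{D}$, or an element of the wrong component of $\mathcal{S}^{1}_{D}$, and ruling this out requires a careful residue computation modulo~$8$ exploiting the fact that $(w,2,0,e)\in\mathcal{S}^{2}_{D}$ enforces the stronger congruence $e^{2}\equiv D\!\!\mod\!16$.
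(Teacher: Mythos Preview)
The paper does not prove this proposition: it is quoted as a black box from Lanneau--Nguyen \cite[Section~8]{LN20}, and the present paper only \emph{uses} its conclusion (together with \cite[Lemma~8.3]{LN20}) to bound the number of $\SL(2,\Z)$-steps needed to cross between $\mathcal{P}^{A_{1}}_{D}$ and $\mathcal{P}^{A_{2}}_{D}$. So there is no in-paper argument to compare your proposal against.

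That said, your proposal is not a proof either: it is an outline that explicitly defers the two hard steps --- the existence of a suitable $j$ making the trajectory close up as a \emph{simple} cylinder, and the verification that the resulting prototype lands in $\mathcal{P}^{A_{1}}_{D}$ rather than back in $\mathcal{P}^{A_{2}}_{D}$. Those are precisely the substance of \cite[Section~8]{LN20}, and nothing you have written addresses them beyond naming them. In particular, your sentence ``Producing the correct $j$ is an arithmetic exercise'' undersells what is required: Lanneau--Nguyen need several pages and a case analysis to produce the prototype and the $j$, and the simplicity of the cylinder is geometric, not merely a primitivity condition on the holonomy. Also note a small inaccuracy: you assert that because the cylinder is simple the new prototype is automatically reduced (of the form $(w',1,0,e')$), but the statement only claims the new prototype lies in $\mathcal{P}^{A_{1}}_{D}$; the examples worked out after the proposition in the paper happen to land at reduced prototypes, but that is not part of the assertion and does not follow from simplicity alone.
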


Now, by Lemma~\ref{lem:H6-red}, we have
\[(w_{1},h_{1},w_{2},h_{2})\in\left\{\left(1,\lambda,\frac{w}{\lambda},2\right),\left(2,\frac{\lambda}{2},\frac{2w}{\lambda},1\right)\right\},\]
where in the first case the prototype is obtained from the origami by scaling horizontally by $\frac{\lambda}{2}$ and vertically by $\frac{1}{2}$, and in the second case by scaling horizontally by $\frac{\lambda}{4}$. Hence, on the origami, the direction of the cylinder becomes
\[\left(\frac{w}{\lambda}+\left\lfloor\frac{\lambda}{2}\right\rfloor,2j+2+\lambda\right) = \left(w_{2}+\left\lfloor\frac{h_{1}}{2}\right\rfloor,2j+2+h_{1}\right),\]
in the first case, or
\[\left(\frac{2w}{\lambda}+2\left\lfloor\frac{\lambda}{2}\right\rfloor,j+1+\frac{\lambda}{2}\right) = (w_{2}+2h_{1},j+1+h_{1}),\]
in the second case.

Moreover, \cite[Lemma 8.3]{LN20} guarantees that
\[\frac{\lambda}{2}+j+1 < \frac{w}{\lambda} + \left\lfloor\frac{\lambda}{2}\right\rfloor \Rightarrow j < w_{2} = O(n).\]
Hence, it requires only $O(n)$ applications of $T$ or $S$ to make the cylinder direction horizontal in the origami.

\begin{figure}[b]
    \centering
    \begin{tikzpicture}[scale = 0.8, line width = 1.5pt]
        \draw (0,0) -- (0,3) -- (1,3) -- (1,2) -- (4,2) -- (4,0) -- (2,0) -- (2,-3) -- (1,-3) -- (1,-2) -- (-2,-2) -- (-2,0) -- cycle;
        \draw[gray, dashed] (0,0) -- (2,0);
        \node[gray] at (0.5,2.5) {$1$};
        \node[gray] at (1.5,-2.5) {$18$};
        \foreach \i in {-2,2}{
            \draw[gray, dashed] (\i/4+1/2,-\i) -- (1+\i/4+1/2,-\i);
            \draw[gray, dashed] (\i/2-1,\i/2) -- (\i/2+3,\i/2);
        }
        \foreach \i in {1,2,3}{
            \draw[gray, dashed] (\i,0) -- (\i,2);
            \draw[gray, dashed] (-2+\i,-2) -- (-2+\i,0);
        }
        \foreach \i in {2,...,5}{
            \node[gray] at (\i-1.5,1.5) {$\i$};
        }
        \foreach \i in {6,...,9}{
            \node[gray] at (\i-5.5,0.5) {$\i$};
        }
        \foreach \i in {10,...,13}{
            \node[gray] at (\i-11.5,-0.5) {$\i$};
        }
        \foreach \i in {14,...,17}{
            \node[gray] at (\i-15.5,-1.5) {$\i$};
        }
        \foreach \x/\y in {0/0,0/2,0/3,1/3,1/2,2/2,4/2,4/0,2/0,2/-2,2/-3,1/-3,1/-2,0/-2,-2/-2,-2/0}{
            \node at (\x,\y) {$\bullet$};
        }
        \draw[line width = 0pt, fill = blue, opacity = 0.3] (-8/7,-2) -- (8/7,2) -- (10/7,2) -- (-6/7,-2) -- cycle;
        \draw[line width = 0pt, fill = blue, opacity = 0.3] (8/7,-3) -- (2,-1.5) -- (2,-2) -- (10/7,-3) -- cycle;
        \draw[line width = 0pt, fill = blue, opacity = 0.3] (-2,-1.5) -- (-8/7,0) -- (-6/7,0) -- (-2,-2) -- cycle;
    \end{tikzpicture}
    \begin{tikzpicture}[scale = 0.8, line width = 1.5pt]
        \draw (0,0) -- (0,2) -- (2,2) -- (2,1) -- (7,1) -- (7,0) -- (4,0) -- (4,-2) -- (2,-2) -- (2,-1) -- (-3,-1) -- (-3,0) -- cycle;
        \draw[gray, dashed] (0,0) -- (4,0);
        \node[gray] at (6.5,0.5) {$9$};
        \node[gray] at (-2.5,-0.5) {$10$};
        \foreach \i in {-1,1}{
            \draw[gray, dashed] (\i+1,-\i) -- (\i+3,-\i);
            \draw[gray, dashed] (\i+2,-\i) -- (\i+2,-2*\i);
        }
        \foreach \i in {1,2}{
            \node[gray] at (-0.5+\i,1.5) {$\i$}; 
        }
        \foreach \i in {17,18}{
            \node[gray] at (\i-14.5,-1.5) {$\i$}; 
        }
        \foreach \i in {3,...,8}{
            \node[gray] at (\i-2.5,0.5) {$\i$};
            \draw[gray, dashed] (\i-2,0) -- (\i-2,1);
        }
        \foreach \i in {11,...,16}{
            \node[gray] at (\i-12.5,-0.5) {$\i$};
            \draw[gray, dashed] (\i-13,0) -- (\i-13,-1);
        }
        \foreach \x/\y in {0/0,0/1,0/2,2/2,2/1,4/1,7/1,7/0,4/0,4/-1,4/-2,2/-2,2/-1,0/-1,-3/-1,-3/0}{
            \node at (\x,\y) {$\bullet$};
        }
        \draw[line width = 0pt, fill = red, opacity = 0.3] (-2.25,-1) -- (2.25,1) -- (3.75,1) -- (-0.75,-1) -- cycle;
        \draw[line width = 0pt, fill = red, opacity = 0.3] (2.25,-2) -- (4,-2+7/9) -- (4,-2+1/9) -- (3.75,-2) -- cycle;
        \draw[line width = 0pt, fill = red, opacity = 0.3] (2,-2+7/9) -- (4,-1+2/3) -- (4,-1) -- (2,-2+1/9) -- cycle;
        \draw[line width = 0pt, fill = red, opacity = 0.3] (-3,-1+2/3) -- (-2.25,0) -- (-0.75,0) -- (-3,-1) -- cycle;
        \node at (2,-3) {$\,$};
        \node at (2,3) {$\,$};
    \end{tikzpicture}
    \caption{On the left, an origami corresponding to the prototype $(4,2,0-7)$. A cylinder in the direction $(4,7)$ is shown in blue. On the right, an origami corresponding to the prototype $(7,2,0,-5)$. A cylinder in the direction $(9,4)$ is shown in red.}
    \label{fig:slope-example-1}
\end{figure}
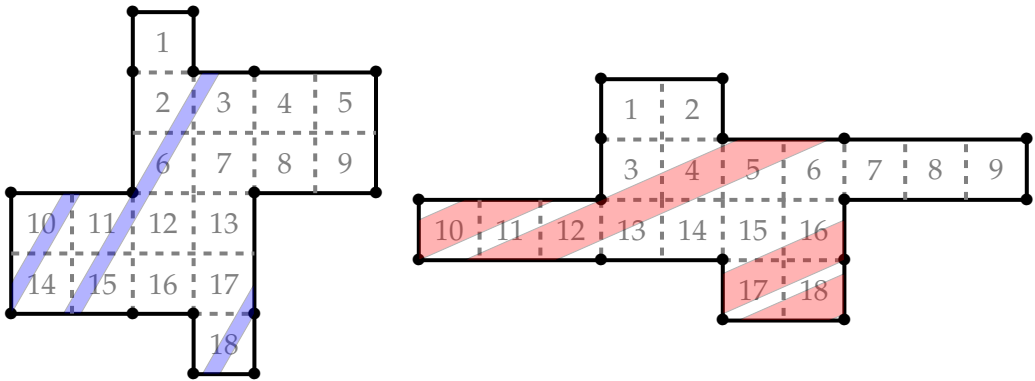

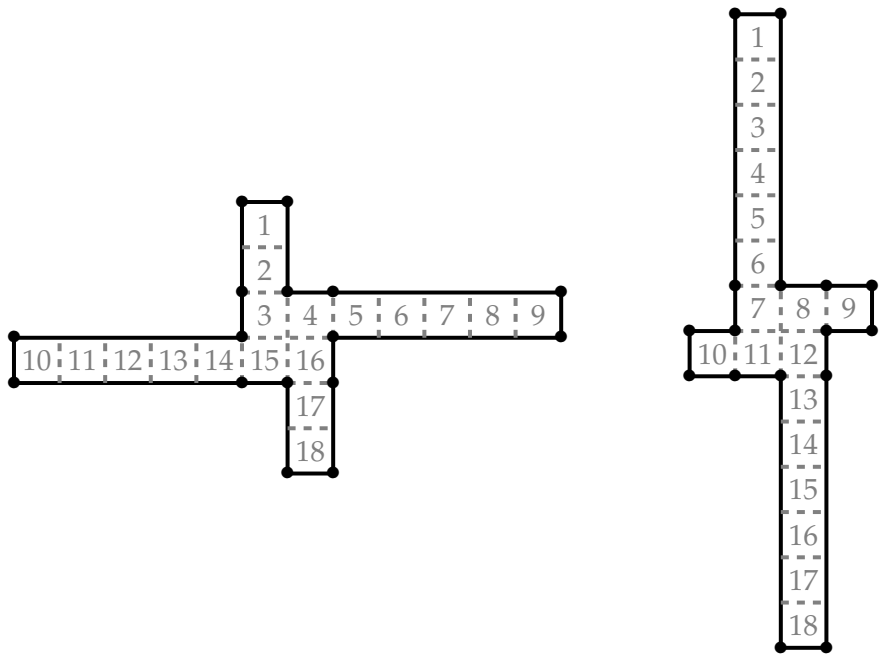
\begin{figure}[t]
    \centering
    \begin{tikzpicture}[scale = 0.6, line width = 1.5pt]
        \draw (0,0) -- (0,3) -- (1,3) -- (1,1) -- (7,1) -- (7,0) -- (2,0) -- (2,-3) -- (1,-3) -- (1,-1) -- (-5,-1) -- (-5,0) -- cycle;
        \draw[gray,dashed] (0,0) -- (2,0);
        \node[gray] at (0.5,2.5) {$1$};
        \node[gray] at (1.5,-2.5) {$18$};
        \foreach \i in {2,3}{
            \draw[gray,dashed] (0,4-\i) -- (1,4-\i);
            \node[gray] at (0.5,3.5-\i) {$\i$};
        }
        \foreach \i in {4,...,9}{
            \draw[gray,dashed] (\i-3,0) -- (\i-3,1);
            \node[gray] at (\i-2.5,0.5) {$\i$};
        }
        \foreach \i in {10,...,15}{
            \draw[gray,dashed] (\i-14,-1) -- (\i-14,0);
            \node[gray] at (\i-14.5,-0.5) {$\i$};
        }
        \foreach \i in {16,17}{
            \draw[gray,dashed] (1,15-\i) -- (2,15-\i);
            \node[gray] at (1.5,15.5-\i) {$\i$};
        }
        \node at (1,7) {$\,$};
        \node at (1,-7) {$\,$};
        \foreach \x/\y in {0/0,0/1,0/3,1/3,1/1,2/1,7/1,7/0,2/0,2/-1,2/-3,1/-3,1/-1,0/-1,-5/-1,-5/0}{
            \node at (\x,\y) {$\bullet$};
        }
    \end{tikzpicture}
    \begin{tikzpicture}[scale = 0.6, line width = 1.5pt]
        \draw (0,0) -- (0,7) -- (1,7) -- (1,1) -- (3,1) -- (3,0) -- (2,0) -- (2,-7) -- (1,-7) -- (1,-1) -- (-1,-1) -- (-1,0) -- cycle;
        \foreach \i in {8,9}{
            \draw[gray,dashed] (\i-7,0) -- (\i-7,1);
            \node[gray] at (\i-6.5,0.5) {$\i$};
        }
        \foreach \i in {1,...,7}{
            \draw[gray,dashed] (0,7-\i) -- (1,7-\i);
            \node[gray] at (0.5,7.5-\i) {$\i$};
        }
        \foreach \i in {12,...,18}{
            \draw[gray,dashed] (1,12-\i) -- (2,12-\i);
            \node[gray] at (1.5,11.5-\i) {$\i$};
        }
        \foreach \i in {10,11}{
            \draw[gray,dashed] (\i-10,-1) -- (\i-10,0);
            \node[gray] at (\i-10.5,-0.5) {$\i$};
        }
        \foreach \x/\y in {0/0,0/1,0/7,1/7,1/1,2/1,3/1,3/0,2/0,2/-1,2/-7,1/-7,1/-1,0/-1,-1/-1,-1/0}{
            \node at (\x,\y) {$\bullet$};
        }
        \node at (-3,0) {$\,$};
        \node at (3,0) {$\,$};
    \end{tikzpicture}
    \caption{On the left, an origami corresponding to the prototype $(14,1,0,-5)$. On the right, an origami corresponding to prototype $(18,1,0,3)$.}
    \label{fig:slope-example-2}
\end{figure}

For example, the origami
\begin{multline*}
    ((2,3,4,5)(6,7,8,9)(10,11,12,13)(14,15,16,17),\\
    (1,16,12,6,2)(3,18,17,13,7)(4,8)(5,9)(10,14)(11,15)),
\end{multline*}
shown on the left of Figure~\ref{fig:slope-example-1}, corresponds to the prototype $(w,h,t,e) = (4,2,0,-7)\in\mathcal{P}^{A_{2}}_{81}$. Here, $(w_1,h_1,w_2,h_2) = (1,1,4,2)$. In this case, it can be checked that setting $j = 2$ works in the construction of Proposition~\ref{prop:LN20-sec8}. That is, the origami contains a cylinder in the direction $\left(w_{2}+\left\lfloor\frac{h_{1}}{2}\right\rfloor,2j+2+h_{1}\right) = (4,7)$ giving rise to an origami corresponding to a prototype in $\mathcal{P}^{A_{1}}_{81}$. The cylinder is shown in blue in Figure~\ref{fig:slope-example-1}. We can make this horizontal by applying $S^{-3}\circ T^{-1}\circ S^{-1}$ (followed by $T^{-1}$ to achieve the cusp representative) to get the origami
\[((3,4,5,6,7,8,9)(10,11,12,13,14,15,16),(1,15,3,2)(4,18,17,16)),\]
shown on the left of Figure~\ref{fig:slope-example-2}, corresponding to the prototype $(14,1,0,-5)\in\mathcal{P}^{A_{1}}_{81}$.

Similarly, for an example of the latter possibility for $(w_{1},h_{1},w_{2},h_{2})$, we can take the origami
\begin{multline*}
    ((1,2)(3,4,5,6,7,8,9)(10,11,12,13,14,15,16)(17,18), \\
    (1,13,3)(2,14,4)(5,17,15)(6,18,16)),
\end{multline*}
shown on the right of Figure~\ref{fig:slope-example-1} and corresponding to the prototype $(7,2,0,-5)$, for which $j = 2$ again works. Here, we have $(w_{1},h_{1},w_{2},h_{2}) = (2,1,7,1)$. The cylinder in direction $(w_{2}+2h_{1},j+1+h_{1}) = (9,4)$ can be made horizontal using $S^{-4}\circ T^{-2}$ (followed by $T^{-1}$ to reach the cusp representative) achieving the origami
\[((7,8,9)(10,11,12),(1,11,7,6,\ldots,2)(8,18,17,\ldots,12)),\]
shown on the right of Figure~\ref{fig:slope-example-2} corresponding to the prototype $(18,1,0,3)$.

So, to connect to the target origami, we either use $O(n^2\log n)$ applications of $T$ or $S$ to reach it directly, or $O(n^2\log n)$ applications of $T$ or $S$ to reach the pseudo-target origami and then the bridge origami, then we apply $O(n)$ applications of $T$ or $S$ to make the appropriate direction horizontal, and finally $O(n^2\log n)$ applications of $T$ or $S$ to reach the target origami.

Hence, we obtain a diameter bound of $O(n^2\log n)$.

%%%%%%%%%%%%%%%%%%%%%%%%%%%%%%%%%%%%%%%%%%%%%%%%%%%%%%%%%%%%%%%%%%%%%%%%%%%%%%%%%%%%

\appendix
\section{Potential improvements to the diameter bounds}\label{app:improvements}

Currently, we shadow a butterfly move by travelling to the cusp representative before performing the Euclidean algorithm operation. Is it possible that there exists a cheap (i.e., still $O(n)$) Euclidean algorithm operation that one can perform at any origami within the cusp that still moves to the target cusp? If so, we could improve the $O(n^{2})$ cost of a butterfly move in this setting to $O(n)$, and connecting to a reduced prototype would only cost $O(n\log n)$. Alternatively, is it possible to reach the cusp representative faster than by simply applying powers of $T$?

Reduced prototypes have the form $(0,b,1,e)$. If $e = 0$ (which can only happen if $D\equiv 0\mod 4$), then we are the prototype  $(0,\frac{D}{4},1,0)$. If $e > 0$, then apply $B_{1}$ again to obtain the prototype $(0,b-e-1,1,-e-2)$ with $-e-2 < 0$. So we can assume that $e < 0$.

For $D\equiv 0 \mod 4$, so that $e$ is even, set the target prototype to be $(0,\frac{D}{4},1,0)$. If $\gcd(b,-\frac{e}{2}) = 1$, then applying $B_{-\frac{e}{2}}$ will reach the target prototype. We also see that $q = -\frac{e}{2}$ is admissible since $e < 0$ and even, and 
\[(e+2qc)^{2} = (e+2(-\frac{e}{2}))^{2} = 0 < D.\]

If $\gcd(b,-\frac{e}{2})\neq 1$, then the computer suggests that we can find an admissible $q$ with $-e-2q < 0$ and $\gcd(b,q) = 1 = \gcd(\frac{e}{2}+q,b-eq-q^{2})$. In such a case, the composition of butterfly moves $B_{\frac{e}{2}+q}\circ B_{q}$ will send $(0,b,1,e)$ to $(0,\frac{D}{4},1,0)$.

Can we prove that such a $q$ value always exists?

If $D \equiv 1\mod 4$ then set the target prototype to be $(0,\frac{(D-1)}{4},1,\pm 1)$, depending on the HLK-invariant if $D\equiv 1 \mod 8$. The same argument as above but with $q = \frac{-e\pm 1}{2}$ should also work. Again, there is a computationally supported conjecture that $\gcd(b,\frac{-e\pm 1}{2}) = 1$ can be achieved after possibly applying another butterfly move. We also see that $q = \frac{-e\pm 1}{2}$ is admissible since
\[(e+2qc)^{2} = (e-e\pm1)^{2} = 1 < D.\]

If the existence of such $q$ can be proved then the reduced prototypes can be connected in $O(1)$ butterfly moves that each cost $O(n)$.

If all of the above improvements are achieved, then we would obtain the bound $O(n\log n) = O(|V|^{\frac{1}{3}}\log |V|)$.

\section{Bounds via the algorithm of Hubert--Leli\`evre}\label{sec:H2-HL}

Here we investigate the diameter bounds that come from a naive analysis of the classification proof of Hubert--Lelièvre. We use the language of Subsection~\ref{subsec:params}. We will see that it achieves the weaker bound $O(|V|^\frac{5}{6}) = O(n^\frac{5}{2})$.

\subsection{From two cylinders cusps to one cylinder cusps}\label{subsec:HL-2-to-1}

Let $X$ be a primitive square-tiled surface in $\mathcal{H}(2)$ tiled by a prime number $n$ of unit squares. Suppose that $X$ decomposes into two horizontal cylinders with heights $h_i$, widths $w_i$, and twists $t_i$, $i=1, 2$. Recall that $w_1 h_1 + w_2 h_2 = n$ and that, by applying $T^k$ with $0\leq k \leq w_1 w_2\leq n^2$ to $X$, we can assume that $0\leq t_i < \gcd(w_i,h_i) \leq \sqrt{n}$ for $i=1, 2$. 

Denote by $h_{total}= h_1 + h_2$ the total height of $X$. The values of the twists allow us to distinguish four cases: 

\begin{itemize}
\item Case $(I)$: $t_1, t_2\neq 0$
\item Case $(II)$: $t_1=0$, $t_2\neq 0$
\item Case $(III)$: $t_1\neq 0$, $t_2=0$
\item Case $(IV)$: $t_1 = t_2 = 0$
\end{itemize} 

We want use the inductive procedure in \cite[\S 5.2]{HL} to reduce $h_{total}$. For this sake, let us recall the algorithm introduced by Hubert and Leli\`evre. 

\subsubsection{Reduction of Case $(I)$} We apply $R$ to $X$. In this way, we get a one-cylinder surface or a two-cylinder surface with total height $h_{total}'\leq t_1+t_2 < h_1+h_2$. Note that $h_{total}'\leq 2\sqrt{n}$. 
\subsubsection{Reduction of Case $(II)$} We apply $R$ to $X$. If the resulting surface has two cylinders, its total height is $h_{total}'\leq t_2 < h_2$. Note that $h_{total}'\leq \sqrt{n}$. 
\subsubsection{Reduction of Case $(III)$} We apply $R$ to $X$ in order to obtain a surface with two cylinders: the top cylinder has zero twist and the bottom cylinder has height $\leq t_1 < h_1$. By applying $T^l$ to $R(X)$ with an adequate choice of $0\leq l\leq n^2$ in order to minimize the twist parameters in the cusp of $R(X)$, we get a surface with $t_1'=t_2'=0$ (Case (IV)), or $t_1'=0$ and $0\neq t_2'$ (Case (II)). 
\subsubsection{Reduction of case $(IV)$} By \cite[Lemma 5.3]{HL}, we have that $X$ has a single cylinder in the direction $(w_1,h_2)$. This direction can be made horizontal with a Euclidean algorithm procedure that takes $O(\max\{w_{1},h_{2}\})$, and so also $O(n)$, steps.

\subsubsection{Two-cylinder counts:}

In summary, given a square-tiled surface $X$ with two cylinders and total height $h_{total}=h_1+h_2$, after taking at most $n^2$ steps, we can assume that its twist parameters are $0\leq t_i < \gcd(w_i,h_i)\leq \sqrt{n}$. At this point, we have the following possibilities for reaching a one-cylinder surface:

\begin{itemize}
    \item \textbf{From a Case (I) surface of height $\boldsymbol{>2}$:} Each surface along the way must have been a Case I-III surface. At each step $O(n^{2})$ moves are used to rotate and normalise the twists, there are at most $O(n^{\frac{1}{2}})$ steps and so there are a total of $O(n^{\frac{5}{2}})$ moves to reach a one-cylinder surface.
    \item \textbf{From a Case (II) surface of height $\boldsymbol{>2}$:} As in the previous case, this requires $O(n^{\frac{5}{2}})$ steps.
    \item \textbf{From a surface of height 2:} We use at most $O(n^{\frac{5}{2}})$ moves to reach a surface of height 2. Following this, we use the $O(n^{2})$ steps of \cite[Lemma 5.2]{HL}. This is a total of $O(n^{\frac{5}{2}}+n^{2}) = O(n^{\frac{5}{2}})$ moves.
    \item \textbf{From a Case (IV) surface:} It takes at most $O(n^{\frac{5}{2}})$ steps to reach a Case (IV) surface. As discussed above, applying \cite[Lemma 5.3]{HL} requires an additional $O(n)$ steps. The total is $O(n^{\frac{5}{2}})$.
\end{itemize}

So any two-cylinder surface can be connected to a one-cylinder surface in at most $O(n^{\frac{5}{2}})$ moves.

\subsection{Connecting one cylinder cusps among themselves} Let $X$ be a square-tiled surface tiled by a prime number $n$ of unit squares. Assume that $X$ decomposes into a single horizontal cylinder. In this case, the surfaces in the cusp determined by $X$ are determined by the lengths $(a,b,c)$ of the saddle-connections in the bottom of the cylinder (so that $a+b+c=n$) and a twist parameter. 

\subsubsection{From $(a,b,c)$ surfaces to $(1,\ast,\ast)$ surfaces} 

By taking at most $n$ steps, we can assume that our $(a,b,c)$ surface $X$ has top saddle-connections of lengths $b$, $a$ and $c$. As it is explained in \cite[\S 5.3.1]{HL}, $R(X)$ is a $(\delta,k\delta,\gamma)$ surface in the direction $(1+t,d)$, where $d=\gcd(a,b)$, $0\leq t < (a+b)/d$ is some twist parameter, $\delta=\gcd(1+t,d)$, and $\gcd(\gamma,\delta)=1$. Hence, by applying a Euclidean algorithm procedure to make the $((1+t)/\delta,d/\delta)$ direction horizontal, we see that any $(a,b,c)$ surface $X$ can be joined to a $(\delta,k\delta,\gamma)$ surface with $\delta$ dividing $\gcd(a,b)=1$ and $\gcd(\delta,\gamma)=1$ in $O(n)$ steps. Thus, by using this procedure once more (with $a, b$ replaced by $\gamma, \delta$), we derive that any $(a,b,c)$ surface $X$ can be joined to a $(1,\ast,\ast)$ surface in $O(n)$ steps. 

\subsubsection{From $(1,b,c)$ surfaces with $b, c$ odd to $(1,1,n-2)$ surfaces} Consider the $L$-shaped surface $Y$ with arms of widths one and lengths $b, c$. Since it is a $(1,b,c)$ surface in the direction $(1,1)$, it suffices to connect $Y$ to a $(1,1,n-2)$ surface. For this sake, we observe that $R\circ T^2(Y)$ is a surface with two cylinders of heights one. By applying $T^l$ for some $0\leq l\leq n^2$, we have that $Z=T^l\circ R\circ T^2(Y)$ has two cylinders of heights one and both twist parameters equal to zero. As it turns out, the $((n-b)/2,1)$ direction in $Z$ is a $(1,1,n-2)$ surface, so that $Z$ can be joined to a $(1,1,n-2)$ surface in $O(n)$ steps. So the original $(1,b,c)$ surface is connected to a $(1,1,n-2)$ surface in $O(n^{2})$ steps.

\subsubsection{From $(1,b,c)$ surfaces with $b, c$ even to $(1,2,n-3)$ surfaces} Let $X$ be a $(1,b,c)$ surface with $b=2b'$ and $c=2c'$. 

Assume first that $b\neq c$, say $b' < c'$. The surface $X$ decomposes into two cylinders in the direction $(c'-b',1)$: the resulting surface $Y$ has a top cylinder with $h_1=b$, $w_1=2$, and a bottom cylinder with $h_2=1$, $w_2=2+\ell$. As it is explained in~\cite[\S 5.3.3]{HL}, the surface $Y$ gives rise to a $(d,2d,\ast)$ surface, $d=\gcd(\ell,b')$, in the direction $(\ell,b')$. Hence, by taking at most $O(n)$ steps, $X$ can be joined to a $(d,2d,\ast)$ surface. Finally, since any $(d,2d,\ast)$ surface gives rise to a $(1,2,\ast)$ surface in the direction $(d,1)$, we conclude that $X$ can be joined to a $(1,2,\ast)$ surface in $O(n)$ steps. 

Assume now that $b=c$. As it is explained in~\cite[\S 5.3.3]{HL}, by taking at most $n$ steps in the cusp of $X$, we get a $(1,b,c)$ surface whose $(b',1)$ leads to a $(2,2,\ast)$ surface $Y$. In the $(2,1)$ direction of $Y$, we see a two cylinders surface with $h_1=2$, $w_1=1$, $h_2=1$. By choosing an adequate $0\leq l\leq n^2$ and by acting with $T^l$, we can set the twist parameters to zero, so that we obtain a $(1,2,n-3)$ surface by looking in the direction $(1,1)$. In particular, $X$ can be joined to a $(1,2,n-3)$ surface in $O(n^2)$ steps.

\subsection{Resulting diameter bounds}

We see that all of the one-cylinder surfaces are connected to the appropriate target surface in $O(n^{2})$ steps. Any two-cylinder surface is connected to a one-cylinder surface in $O(n^{\frac{5}{2}})$ steps. Hence, the diameter bound is $O(n^{\frac{5}{2}}) = O(|V|^{\frac{5}{6}})$.

\subsection{Potential improvements}

Computer investigations suggest that a two-cylinder surface can be sent to a one-cylinder surface after only a small (maybe $O(1)$ or $O(\log n)$) number of applications of the steps discussed in Subsection~\ref{subsec:HL-2-to-1}. Each step requires at most $O(n^{2})$ steps to move inside the cusps. So this would suggest an improvement from $O(n^{\frac{5}{2}})$ to $O(n^{2})$ or $O(n^{2}\log n)$ giving the diameter bound $O(|V|^{\frac{2}{3}})$ or $O(|V|^{\frac{2}{3}}\log|V|)$. Furthermore, it is likely possible to improve the $O(n^{2})$ bound for moving in the cusp to some bound of the form $O(n^{c})$ with $c < 2$, since $O(n^{2})$ cusp width requires small height.

\end{document}